\documentclass[12pt,reqno]{amsart}
\usepackage[letterpaper,margin=1in]{geometry}
\usepackage[T1]{fontenc}
\usepackage{lmodern,amssymb,mathtools,microtype,booktabs,capt-of,placeins,needspace,etoolbox}
\usepackage{xcolor}
\usepackage{algorithm,algpseudocode}
\usepackage{tikz,chessfss}
\usetikzlibrary{decorations.pathreplacing}
\definecolor{queensUpper}{HTML}{087E8B}
\definecolor{queensLower}{HTML}{C66A2B}
\usepackage{hyperref} 
\definecolor{bluey}{rgb}{0,0,0.6}
\hypersetup{
  pdftitle={Greedy queens and the golden ratio},
  pdfauthor={Boon Suan Ho},
  bookmarksnumbered=true,
  bookmarksopen=true,
  bookmarksdepth=2,
  pdfpagemode=UseOutlines,
  colorlinks,
  linkcolor=bluey,
  citecolor=bluey,
  urlcolor=bluey
}

\makeatletter
\AddToHook{cmd/@makecaption/before}{%
  \ifdefstring{\@captype}{table}{\addvspace{8pt}}{}%
}
\makeatother

\makeatletter
\def\thmhead@plain#1#2#3{%
  \thmname{#1}\thmnumber{\@ifnotempty{#1}{ }\@upn{#2}}%
  \thmnote{ \the\thm@notefont(#3)}}
\let\thmhead\thmhead@plain
\makeatother

\newtheorem{theorem}{Theorem}
\newtheorem{lemma}[theorem]{Lemma}
\newtheorem{proposition}[theorem]{Proposition}
\newtheorem{corollary}[theorem]{Corollary}
\theoremstyle{definition}
\newtheorem{definition}[theorem]{Definition}
\AddToHook{env/definition/begin}{%
  \pushQED{\qed}}
\AddToHook{env/definition/end}{\popQED}
\newtheorem{condition}[theorem]{Condition}
\theoremstyle{remark}
\newtheorem*{remark}{Remark}
\AddToHook{env/remark/begin}{%
  \pushQED{\qed}}
\AddToHook{env/remark/end}{\popQED}
\theoremstyle{plain}
\newcommand{\N}{\mathbb N}
\newcommand{\dts}{\mathinner{\ldotp\ldotp}}
\newcommand{\J}{\mathcal J}
\newcommand{\last}{\operatorname{last}_{12}}

\newcommand{\xnote}[1]{}
\allowdisplaybreaks[1]
\title[Greedy queens and the golden ratio]{Greedy queens and the golden ratio}
\author{Boon Suan Ho}
\address{Department of Mathematics, National University of Singapore}
\email{hbs@u.nus.edu}
\date{}
\begin{document}

\begin{abstract}
Place a queen in each successive column of an infinite $\N\times\N$ chessboard,
always choosing the lowest row such that no two queens may attack one another. 
We prove that the row~$q_n$ occupied by the queen in the $n$th column satisfies 
$q_n=n\phi+O(1)$ or $q_n=n/\phi+O(1)$, where $\phi=(1+\sqrt5)/2$ is the golden ratio.
\end{abstract}
\maketitle
\thispagestyle{empty}

%
\section{Introduction}
Let $\N=\{0,1,2,\ldots\}$. Set $q_0=0$ and, for
$n\ge1$, let $q_n$ be the least $y\in\N$ such that
\begin{equation}\label{eq:greedy}
 y\ne q_i,\qquad y-n\ne q_i-i,\qquad y+n\ne q_i+i
 \qquad(0\le i<n);
\end{equation}
that is, $(n,y)$ does not share a row, diagonal, or antidiagonal
with any previous queen.
Each step excludes only finitely many rows, so $q_n$ is defined
for every column $n$, and these queens are pairwise nonattacking. The sequence begins
\[
 (q_0,q_1,q_2,\dots)=(0,2,4,1,3,8,10,12,14,5,7,18,6,21,9,\dots);
\]
it is the \emph{greedy queens permutation of $\N$} (we prove that it is a permutation in Lemma~\ref{lem:surjective}).
Call a queen \textcolor{queensUpper}{\emph{upper}} if $q_n>n$ and
\textcolor{queensLower}{\emph{lower}} if $q_n<n$.
For a square $(x,y)$, its diagonal is indexed by $y-x$ and its antidiagonal by $y+x$.

\begin{figure}[!ht]
\centering
\definecolor{queensSquare}{HTML}{F2F3F5}
\definecolor{queensGuide}{HTML}{9CA6B8}
\definecolor{queensFocus}{HTML}{E7EBFF}
\definecolor{queensAnti}{HTML}{5865A8}
\scalebox{0.90}{%
\begin{minipage}{0.84\linewidth}
\begin{minipage}[t]{0.50\linewidth}
\vspace{0pt}
\centering
\setboardfontsize{9.5pt}
\begin{tikzpicture}[x=0.3cm,y=0.3cm,
  coordinate label/.style={font=\sffamily\scriptsize,text=black!60,inner sep=1pt},
  queen/.style={inner sep=0pt,text=black}]
  \foreach \x in {0,...,20} {
    \foreach \y in {0,...,30} {
      \pgfmathtruncatemacro{\parity}{mod(\x+\y,2)}
      \ifnum\parity=0
        \fill[queensSquare] (\x-0.5,\y-0.5) rectangle (\x+0.5,\y+0.5);
      \fi
    }
  }
  \fill[queensFocus] (19.5,-0.5) rectangle (20.5,30.5);
  \draw[queensGuide,dash pattern=on 2.2pt off 2.2pt,line width=0.45pt]
    (-0.5,-0.5) -- (20.5,20.5);
  \node[queen] at (0,0) {\BlackQueenOnWhite};
  \foreach \x/\y in {1/2,2/4,5/8,6/10,7/12,8/14,11/18,13/21,
                      15/24,16/26,17/28,18/30} {
    \node[queen,text=queensUpper] at (\x,\y) {\BlackQueenOnWhite};
  }
  \foreach \x/\y in {3/1,4/3,9/5,10/7,12/6,14/9,19/11} {
    \node[queen,text=queensLower] at (\x,\y) {\BlackQueenOnWhite};
  }
  \foreach \y in {0,...,12,14,15,...,30} {
    \draw[queensGuide,line width=0.35pt]
      (19.88,\y-0.12) -- (20.12,\y+0.12)
      (19.88,\y+0.12) -- (20.12,\y-0.12);
  }
  \node[queen,text=queensLower] at (20,13) {\BlackQueenOnWhite};
  \foreach \i in {0,2,...,20} {
    \node[coordinate label,anchor=north] at (\i,-0.65) {\i};
  }
  \foreach \i in {0,5,...,30} {
    \node[coordinate label,anchor=east] at (-0.65,\i) {\i};
  }
\end{tikzpicture}
\end{minipage}\hfill
\begin{minipage}[t]{0.40\linewidth}
\vspace{0pt}
\centering
\begin{tikzpicture}[x=0.6cm,y=0.6cm,
  index/.style={font=\small,text=black!40,inner sep=0pt},
  coordinate label/.style={font=\sffamily\scriptsize,text=black!60,inner sep=1pt}]
  \node[font=\small\bfseries] at (2,5.15) {Diagonals: $y-x$};
  \foreach \x in {0,...,4} {
    \foreach \y in {0,...,4} {
      \pgfmathtruncatemacro{\parity}{mod(\x+\y,2)}
      \ifnum\parity=0
        \fill[queensSquare] (\x-0.5,\y-0.5) rectangle (\x+0.5,\y+0.5);
      \fi
      \pgfmathtruncatemacro{\diagonalindex}{\y-\x}
      \ifnum\diagonalindex=2
        \fill[queensUpper!15] (\x-0.5,\y-0.5) rectangle (\x+0.5,\y+0.5);
        \node[index,text=queensUpper,font=\small\bfseries] at (\x,\y) {$2$};
      \else\ifnum\diagonalindex=-2
        \fill[queensLower!17] (\x-0.5,\y-0.5) rectangle (\x+0.5,\y+0.5);
        \node[index,text=queensLower,font=\small\bfseries] at (\x,\y) {$-2$};
      \else
        \node[index] at (\x,\y) {$\diagonalindex$};
      \fi\fi
    }
  }
  \foreach \i in {0,...,4} {
    \node[coordinate label,anchor=north] at (\i,-0.65) {\i};
    \node[coordinate label,anchor=east] at (-0.65,\i) {\i};
  }
  \draw[black,line width=0.5pt]
    (-0.5,4.5) -- (-0.5,-0.5) -- (4.5,-0.5);
  \node[coordinate label,text=black,anchor=west,inner sep=3pt]
    at (4.5,-0.5) {$x$};
  \node[coordinate label,text=black] at (-0.9,4.6) {$y$};
  \node[font=\footnotesize] at (2,-1.55)
    {\textcolor{queensUpper}{2nd upper diagonal}:\quad $y-x=2$};
  \node[font=\footnotesize] at (2,-2.15)
    {\textcolor{queensLower}{2nd lower diagonal}:\quad $y-x=-2$};
\end{tikzpicture}

\vspace{0.35cm}

\begin{tikzpicture}[x=0.6cm,y=0.6cm,
  index/.style={font=\small,text=black!40,inner sep=0pt},
  coordinate label/.style={font=\sffamily\scriptsize,text=black!60,inner sep=1pt}]
  \node[font=\small\bfseries] at (2,5.15) {Antidiagonals: $y+x$};
  \foreach \x in {0,...,4} {
    \foreach \y in {0,...,4} {
      \pgfmathtruncatemacro{\parity}{mod(\x+\y,2)}
      \ifnum\parity=0
        \fill[queensSquare] (\x-0.5,\y-0.5) rectangle (\x+0.5,\y+0.5);
      \fi
      \pgfmathtruncatemacro{\antidiagonalindex}{\y+\x}
      \ifnum\antidiagonalindex=3
        \fill[queensAnti!16] (\x-0.5,\y-0.5) rectangle (\x+0.5,\y+0.5);
        \node[index,text=queensAnti,font=\small\bfseries] at (\x,\y) {$3$};
      \else
        \node[index] at (\x,\y) {$\antidiagonalindex$};
      \fi
    }
  }
  \foreach \i in {0,...,4} {
    \node[coordinate label,anchor=north] at (\i,-0.65) {\i};
    \node[coordinate label,anchor=east] at (-0.65,\i) {\i};
  }
  \draw[black,line width=0.5pt]
    (-0.5,4.5) -- (-0.5,-0.5) -- (4.5,-0.5);
  \node[coordinate label,text=black,anchor=west,inner sep=3pt]
    at (4.5,-0.5) {$x$};
  \node[coordinate label,text=black] at (-0.9,4.6) {$y$};
  \node[font=\footnotesize] at (2,-1.55)
    {\textcolor{queensAnti}{3rd antidiagonal}:\quad $y+x=3$};
\end{tikzpicture}
\end{minipage}
\end{minipage}%
}
\caption{The greedy step in column $20$ (left), diagonal labels $y-x$
(upper right), and antidiagonal labels $y+x$ (lower right).}
\label{fig:infinite-board}
\end{figure}

Antti Karttunen introduced the positive-integer version of this sequence
in 2001 as OEIS \href{https://oeis.org/A065188}{A065188}; our $0$-indexing gives
\href{https://oeis.org/A275895}{A275895} \cite{OEIS}.
Dekking, Shallit, and Sloane studied a construction that scans the
board along successive antidiagonals rather than columns; they showed that it produces the same set of
queen positions as our column-by-column rule \cite[pp.~24--25]{DSS}.
They conjectured that upper and lower queens stay within bounded
vertical distances of the lines $y=x\phi$ and $y=x/\phi$ respectively
\cite[Conjecture~25]{DSS}, where
\[
 \phi=\frac{1+\sqrt5}{2}\approx1.618034,\qquad \phi^{-1}=\phi-1\approx0.618034.
\]

\begin{theorem}\label{thm:main}
For every $n\ge1$,
\[
 \left\{
 \begin{aligned}
 \left|q_n-n\phi\right|&<\frac{5}{\phi}\approx3.09017&&\text{if }q_n>n,\\[4pt]
 \Bigl|q_n-\frac{n}{\phi}\Bigr|&<4+\frac{5}{\phi}\approx7.09017&&\text{if }q_n<n.
 \end{aligned}
 \right.
\]
\end{theorem}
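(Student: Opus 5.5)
We would prove the theorem by a structural induction that tracks, column by column, a small amount of state: the set of ``holes'' (rows, diagonals, antidiagonals below the current frontier that are still unused), and show that this state stays within a finite family of configurations. The guiding heuristic is the Wythoff-like picture. Upper queens occupy rows near $n\phi$ and lower queens rows near $n/\phi$, so upper queens fill the diagonals $q_n-n\approx n/\phi$. Lower queens fill the diagonals $\approx -n/\phi^2$. The rows used by lower queens, at density $1/\phi$ per column up to row $y$, together with rows used by upper queens, at density $1/\phi^{2}$, partition $\N$ with bounded defect. Concretely, we would define $a_k$ and $b_k$ to be the column of the $k$th lower and $k$th upper queen respectively, and aim to show that $a_k=\lfloor k\phi\rfloor+O(1)$ and $b_k=\lfloor k\phi^2\rfloor+O(1)$. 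The row identities then follow. The $k$th upper queen sits on row $\approx k\phi^2\cdot\phi$, and the lower queen in column $a_k$ takes the least free row, which is $\approx k$.

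The key steps, in order, are as follows.

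\begin{enumerate}
\item First, use the permutation property (Lemma~\ref{lem:surjective}). Combine it with a counting argument showing that when a lower queen is placed in column $n$, every row below some threshold $r(n)$ is already used. Then $q_n$ is essentially the least unused row, so $q_n$ is determined by how many rows have been consumed so far.
\item Second, show that an upper queen in column $n$ is forced onto the lowest row avoiding all antidiagonals $q_i+i$ of earlier queens. Since the lower rows are blocked by antidiagonals from recent queens, we get $q_n\approx \max_{i<n}(q_i+i)$ up to a bounded correction. This gives a recursion of the form $q_n+n\approx$ (previous antidiagonal frontier) $+\,2$ or so.
\item Third, combine these into a coupled system for the counting functions $U(n)=\#\{i\le n: q_i>i\}$ and $L(n)=n-U(n)$. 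For these we derive $U(n)\approx n/\phi^{2}$ and $L(n)\approx n/\phi$ from the self-consistency equations $x+x\phi^{2}\cdot(\ldots)$, whose fixed point is the golden ratio.
\item Fourth, convert the density statement into the $O(1)$ statement by a discrepancy argument. Here we would compare $(a_k)$ and $(b_k)$ with the Beatty sequences $\lfloor k\phi\rfloor$ and $\lfloor k\phi^2\rfloor$, and show that the discrepancy obeys a contracting recursion. This is possible because the map $x\mapsto x/\phi$ has factor $<1$, so errors cannot accumulate.
\end{enumerate}

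The main obstacle is step four, which requires a genuinely uniform bound rather than an asymptotic one. Greedy choices can be perturbed by ``late'' holes: a row or diagonal left empty far back that suddenly becomes the least admissible option. We would first try to prove an invariant by strong induction on $n$. The invariant has two parts. The first says that all rows below $n/\phi-C_1$ are used, and every upper diagonal index below $n/\phi-C_2$ is used. The second says that every queen placed so far satisfies the stated bounds. We then check that the greedy step preserves the invariant using only the finitely many configurations of the unused slots inside the window of width $O(1)$. If the window analysis becomes too large by hand, the fallback is to encode the local state, meaning the offsets of the frontier rows and diagonals relative to $n\phi$ and $n/\phi$, as a finite automaton. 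We would then verify closure of the invariant set, where the constants $5/\phi$ and $4+5/\phi$ emerge as the extreme offsets attained.
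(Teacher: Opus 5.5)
\textbf{There is a genuine gap.} Your overall architecture is a finite-state closure check feeding a contraction with factor below one, and that matches the paper's. The gap is exactly where you place your fallback. The finite automaton on ``offsets of the frontier rows and diagonals'' (the unused slots in an $O(1)$ window) is not closed under the greedy step, so there is no finite invariant set whose closure you could verify.

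Let $m\approx n/\phi$ be the least unused row. Whether a lower candidate $(n,m+r)$ is attacked depends on two things:
\begin{itemize}
\item whether row $m+r$ already holds an upper queen, which was placed in a column near $n/\phi^2$;
\item whether antidiagonal $n+m+r$ holds one, which was placed in a column near $m$.
\end{itemize}
Each time $m$ advances, information about columns at a distance proportional to $n$ behind the current column must enter the state. Nothing in a bounded window of holes around the frontiers determines it. If you store that information, the state grows without bound: the stretch $\sigma_m\dots\sigma_{n-1}$ of the queen word has length about $n/\phi^2$. If you treat it as unconstrained input, spurious branches violate the bounds; the paper reports that even with its records, history windows of length $4$ through $11$ are too weak.

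The missing idea is a self-consistent certificate:
\begin{itemize}
\item Encode upper columns and rows as the queen word $\sigma_i=2u_i+b_i$.
\item Keep a short queue of it starting at index $m$, together with the single scalar $z=n-m-U(m-1)$. This suffices both for the antidiagonal test $2h+\Gamma(h)=z+r$ and for computing the new row bit $b_n$.
\item Supply unseen symbols by branching over a fixed graph of allowed twelve-symbol windows.
\item Require that the symbol $\sigma_n$ produced at the \emph{other} end of the word follow the same graph.
\end{itemize}
That output check is what lets an induction along the actual board show that the true word always follows the graph. Hence the true branch is always among the $7014$ states the computer explores.

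Several of your heuristics are also wrong in ways that would derail steps 2--4.
\begin{itemize}
\item You have the column densities swapped. In fact $U(n)\approx n/\phi$ and $L(n)\approx n/\phi^2$, so the $k$th upper column is near $k\phi$ and the $k$th lower column is near $k\phi^2$. The Beatty pair $k\phi,k\phi^2$ describes the lower and upper \emph{rows}: the sorted lower rows satisfy $v_j=j+U(j-1)$ by Lambek--Moser. So the lower queen in the $k$th lower column sits near row $k\phi$, not near row $k$.
\item In step 2, antidiagonals play no role. The $k$th upper queen lies exactly on the $k$th upper diagonal, so $q_n=n+U(n)$, whereas $\max_{i<n}(q_i+i)\approx n\phi^2$.
\item The used upper diagonals always form an initial segment $1,\dots,U(n-1)$. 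The holes that matter are the lower-diagonal magnitudes, not the upper diagonals.
\item Step 1 claims a lower queen sits essentially at the least unused row. That already presupposes a bound on the candidate window $w=n-m-d$, which is part of what must be proved.
\end{itemize}

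Finally, the paper shows that the finite analysis need only deliver $|d_j-j|\le4$. The sorted-row identity, a sorting argument, and the contraction $\varepsilon(n)=\phi^{-2}\varepsilon(L(n))-\phi^{-2}\eta_n$ with $|\eta_n|\le5$ then yield $5/\phi$ and $4+5/\phi$. These are geometric-series bounds with $C=4$, not extreme offsets attained; the true extremes are smaller.
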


\begin{figure}[!ht]
\centering
\begin{minipage}[c]{0.36\linewidth}
\centering
\begin{tikzpicture}[x=0.031cm,y=0.031cm,
  tick label/.style={font=\small,text=black!65,inner sep=2pt},
  line label/.style={font=\small,anchor=west,inner sep=4pt}]
  \draw[black,line width=0.5pt] (0,170) -- (0,0) -- (105,0);
  \foreach \x in {0,20,40,60,80,100} {
    \draw[black,line width=0.4pt] (\x,0) -- (\x,-2);
    \node[tick label,anchor=north] at (\x,-3) {\x};
  }
  \foreach \y in {0,50,100,150} {
    \draw[black,line width=0.4pt] (0,\y) -- (-2,\y);
    \node[tick label,anchor=east] at (-3,\y) {\y};
  }
  \node[font=\small,anchor=west,inner sep=3pt] at (105,0) {$x$};
  \node[font=\small,anchor=south,inner sep=3pt] at (0,170) {$y$};
  \draw[queensUpper!45,line width=0.5pt]
    (0,0) -- (100,{100*(1+sqrt(5))/2});
  \draw[queensLower!45,line width=0.5pt]
    (0,0) -- (100,{200/(1+sqrt(5))});
  \node[line label,text=queensUpper] at (102,{100*(1+sqrt(5))/2})
    {$y=x\phi$};
  \node[line label,text=queensLower] at (102,{200/(1+sqrt(5))})
    {$y=\dfrac{x}{\phi}$};
  \foreach \n/\qn in {
    1/2,2/4,3/1,4/3,5/8,6/10,7/12,8/14,9/5,10/7,
    11/18,12/6,13/21,14/9,15/24,16/26,17/28,18/30,19/11,20/13,
    21/34,22/36,23/38,24/40,25/15,26/17,27/44,28/16,29/47,30/19,
    31/50,32/52,33/20,34/55,35/57,36/59,37/22,38/62,39/23,40/65,
    41/27,42/25,43/69,44/71,45/73,46/75,47/77,48/29,49/31,50/81,
    51/83,52/85,53/32,54/88,55/33,56/91,57/37,58/35,59/95,60/97,
    61/99,62/101,63/39,64/104,65/106,66/41,67/109,68/42,69/112,70/43,
    71/115,72/117,73/119,74/45,75/122,76/46,77/49,78/126,79/48,80/129,
    81/131,82/133,83/135,84/51,85/53,86/139,87/141,88/143,89/54,90/56,
    91/147,92/149,93/151,94/58,95/154,96/156,97/158,98/60,99/161,100/61} {
    \ifnum\qn>\n
      \filldraw[fill=queensUpper,draw=white,line width=0.3pt]
        (\n,\qn) circle[radius=1.1pt];
    \else
      \filldraw[fill=queensLower,draw=white,line width=0.3pt]
        (\n,\qn) circle[radius=1.1pt];
    \fi
  }
  \fill[black] (0,0) circle[radius=1.1pt];
\end{tikzpicture}
\end{minipage}\hfill
\begin{minipage}[c]{0.60\linewidth}
Larsson and W\"astlund \cite{LW} obtained analogous golden-ratio
bounds for Maharaja Nim, where the queen can also make knight moves. Dekking, Shallit, and Sloane
\cite[p.~26]{DSS} asked whether their method could be adapted to the
single-quadrant greedy-queens problem. Our proof shares the strategy of \cite{LW}: we use a finite symbolic
model to show that the $j$th lower queen lies within a fixed distance
of the $j$th lower diagonal, then deduce the golden-ratio bounds
from counting identities. 
We use different methods to establish boundedness, however, since the single-quadrant greedy-queens problem is asymmetric, unlike Maharaja Nim.

\end{minipage}
\caption{Queen positions $(n,q_n)$ for $0\le n\le100$ on equally scaled
axes, with the reference lines $y=x\phi$ and $y=x/\phi$.}
\label{fig:golden-lines}
\end{figure}

Carter \cite[Section~5]{Carter} gives a related local description of the
single-quadrant problem and proposes a two-string rewriting approach,
whose completeness is left unproved. Our construction uses the same
separation of upper and lower attacks, but encodes upper-queen information
by a \emph{history graph}. We establish the required bounds by
finite-state verification and an induction connecting the verified
transitions to the actual greedy process.
Like Larsson and W\"astlund \cite{LW}, our proof reduces Theorem~\ref{thm:main} to a bound on lower diagonals: namely, it suffices to prove that $|d_j-j|\le4$ (Lemma~\ref{lem:estimates}) for all $j\ge1$, where $d_j$ is such that
the $j$th lower queen lies on the $d_j$th lower diagonal. 

\begin{remark}[Game interpretation]
Consider a single moving queen on an otherwise empty $\N^2$ board.
Two players alternate moves of the form
\[
 (x,y)\longmapsto(x-t,y),\quad(x,y-t),\quad
 (x-t,y-t),\quad(x-t,y+t),
\]
where $t\ge1$ is an integer and the destination has nonnegative
coordinates. The player unable to move loses. Every move decreases
$2x+y$, so play terminates. The losing positions are exactly $(x,q_x)$:
no move joins two chosen squares, while every other square can reach
one. Indeed, if $y>q_x$, move down to $(x,q_x)$; if $y<q_x$, the greedy
rule supplies an attacking queen in an earlier column.
Thus the chosen squares are the $\mathcal P$-positions, or positions
of Sprague--Grundy value zero \cite[Sections~7--8]{DSS}.
Writing $G$ for the Sprague--Grundy function, $G(x,y)=0$ exactly when $y=q_x$.
Theorem~\ref{thm:main} bounds the location of this zero set, without
determining the positive values.
Deleting the antidiagonal move gives Wythoff Nim, whose losing positions
have an exact golden-ratio description \cite[Section~1]{LW}.
\end{remark}

\medskip
\noindent\textbf{Paper organization.} 
Section~\ref{sec:rows} proves some useful lemmas about greedy queen positions. 
Section~\ref{sec:contraction} uses these lemmas to turn the bound on lower 
diagonals into a recurrence for the number of upper queens up to the $n$th column. 
A contraction argument then yields the stated golden-ratio estimates.
Section~\ref{sec:local} develops an encoding for local states of the board.
Section~\ref{sec:sufficiency} proves that these local states suffice for determining the position of greedy queens on the board.
Section~\ref{sec:finite} verifies closure and the diagonal discrepancy bound for
a finite collection of states by computer, proves by induction
that the actual greedy process always stays within this collection, and
sharpens Theorem~\ref{thm:main}'s constants.
Section~\ref{sec:generation} uses our results to give a sequential
algorithm for generating $q_0,\ldots,q_n$ in $O(n)$ time using
$O(\log n)$ words of memory.
Appendix~\ref{app:artifacts} discusses how to reproduce our results with a computer.

\medskip
\noindent\textbf{Notation.} For real $x,y$, write $[x\dts y]=\{k\in\mathbb Z: x\le k\le y\}$. For a proposition $P$, the \emph{Iverson bracket}
$[P]$ equals $1$ if $P$ is true and $0$ otherwise.
%
\section{Upper queens and sorted lower rows}\label{sec:rows}
Write $(x_k^U,y_k^U)$ and $(x_k^L,y_k^L)$ for the coordinates of the $k$th
upper and lower queens respectively, whenever they exist. Each family is
ordered by increasing column, starting at $k=1$.
For $k\ge1$, we call $\{(x,y)\in\N^2:y-x=k\}$ and
$\{(x,y)\in\N^2:y-x=-k\}$ respectively the \emph{$k$th upper}
and \emph{$k$th lower diagonal}; $\{(x,y)\in\N^2:x+y=k\}$
is the \emph{$k$th antidiagonal} (Figure~\ref{fig:infinite-board}).
A row or column is \emph{upper} if it contains an upper queen, and \emph{lower} if it contains a lower queen.

The next identity is implicit in Knuth's program \texttt{infty-queens}
\cite{KnuthQueens}. 
\begin{lemma}\label{lem:upper}
The $k$th upper queen lies on the $k$th upper diagonal; that is,
$y_k^U=x_k^U+k$.
\end{lemma}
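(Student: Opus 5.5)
We prove the statement by induction on the column $n$, using the following invariant: just before column $n$ is filled, the upper queens in columns $0,\dots,n-1$ occupy exactly the upper diagonals $1,2,\dots,m$, where $m$ is the number of such queens. The $k$th of these queens occupies the $k$th upper diagonal. For $n=1$ the invariant holds with $m=0$, since the only earlier queen is $(0,0)$, which lies on diagonal $0$. If $q_n<n$, no upper queen is added and the invariant is preserved. So it suffices to show that if $q_n>n$, then $q_n=n+m+1$. The invariant then holds at column $n+1$, and it yields $y_k^U=x_k^U+k$ for every $k$.

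First we show that if $q_n>n$, then $q_n\ge n+m+1$. The case $q_n=n$ cannot occur, because $(n,n)$ lies on diagonal $0$ together with the queen $(0,0)$. For $1\le t\le m$, the square $(n,n+t)$ lies on the $t$th upper diagonal, which by hypothesis already holds a queen. So these rows are forbidden by \eqref{eq:greedy}.

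Next we show that the square $(n,n+m+1)$ is not attacked by any earlier queen $(i,q_i)$ with $i<n$. This gives $q_n\le n+m+1$; combined with the previous paragraph, an upper $q_n$ must equal $n+m+1$. We check the three lines through the square separately.
\begin{itemize}
\item \emph{Row.} A lower queen has $q_i<i<n$, so it cannot lie in row $n+m+1$. If an upper queen had $q_i=n+m+1$, its diagonal index would be $q_i-i=n+m+1-i>m+1$. This contradicts the hypothesis that only diagonals $1,\dots,m$ are used by upper queens.
\item \emph{Diagonal.} The square lies on upper diagonal $m+1$, which is unoccupied by hypothesis.
\item \emph{Antidiagonal.} If $q_i+i=2n+m+1$ with $i<n$, then $q_i=2n+m+1-i>n+m+1>i$. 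So the queen is upper, and its diagonal index is $q_i-i=2n+m+1-2i>m+1$, again a contradiction.
\end{itemize}

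Each step only compares diagonal indices, so no step is a genuine obstacle. The one point needing care is to state the invariant in the form ``exactly the diagonals $1,\dots,m$ are used''. This form gives both conclusions at once: every smaller upper row at column $n$ is blocked, and every earlier upper queen has diagonal index at most $m$, which rules out the row and antidiagonal attacks on $(n,n+m+1)$.
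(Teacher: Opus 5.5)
Your proof is correct and takes essentially the same approach as the paper. Both use an induction whose hypothesis is that the earlier upper queens fill exactly the upper diagonals $1,\dots,m$; this blocks rows $n+1,\dots,n+m$ and leaves $(n,n+m+1)$ free. The differences are cosmetic: you induct on the column rather than on the upper-queen index, and you rule out row and antidiagonal attacks by comparing diagonal indices, whereas the paper notes that every earlier queen has both coordinates strictly smaller than those of $(n,n+k)$.
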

\noindent
\begin{minipage}[t]{0.60\linewidth}
\vspace{0pt}
\begin{proof}
Suppose inductively that the first $k-1$ upper queens have used
the upper diagonals $1,\dots,k-1$ respectively, and consider column $n=x_k^U$. Those upper diagonals prevent rows $n+1,\dots,n+k-1$ from being used,
so the lowest upper square that is possibly free is
$(n,n+k)$. By induction, the $k$th upper diagonal that it lies on is free,
so it remains
to show that its row and antidiagonal are free as well. 
Every earlier upper queen $(x_j^U,y_j^U)$ has $j<k$ and
$x_j^U<n$, so the inductive hypothesis gives
$y_j^U=x_j^U+j<n+k$. Every earlier lower queen, and the queen at the
origin, also lies below row $n+k$. Thus every earlier queen has both
coordinates strictly smaller than those of $(n,n+k)$, so none shares
its row or antidiagonal. Since the queen in column $n$ is upper and
all smaller upper rows are attacked, the greedy rule chooses row
$n+k$. This completes the induction.
\end{proof}
\end{minipage}\hfill
\begin{minipage}[t]{0.38\linewidth}
\vspace{-0.5\baselineskip}
\centering
\setboardfontsize{11pt}
\begin{tikzpicture}[x=0.46cm,y=0.46cm,font=\scriptsize,line cap=round,
  earlier queen/.style={inner sep=0pt,fill=white,text=queensUpper},
  free line/.style={dash pattern=on 3pt off 2pt,line width=0.5pt}]
  \draw[black!35,line width=0.5pt] (0,0) -- (7,7);
  \draw[black!55,line width=0.5pt] (7,-0.1) -- (7,12);
  \node[anchor=north] at (7,-0.3) {$n$};
  \foreach \j in {1,...,4} {
    \draw[queensUpper!65,line width=0.6pt] (0,\j) -- (7,{7+\j});
    \node[text=black!55,inner sep=0pt] at (7,{7+\j}) {$\times$};
  }
  \draw[free line,queensUpper] (0,5) -- (9,14);
  \node[rotate=45,anchor=south,text=queensUpper,inner sep=2pt]
    at (2.8,7.8) {upper diagonal $k$};
  \draw[free line,black!40] (0,12) -- (10.8,12);
  \node[anchor=south west,inner sep=2pt,yshift=-0.8pt] at (7.5,12.05) {row $n+k$};
  \draw[free line,black!45] (4,15) -- (11.8,7.2);
  \node[anchor=west,inner sep=2pt,yshift=0.8pt] at (7.3,8) {$n+1$};
  \node[inner sep=0pt] at (8.6,9.5) {$\vdots$};
  \node[anchor=west,inner sep=2pt,yshift=0.8pt] at (7.3,11) {$n+k-1$};
  \node[align=center,anchor=north,inner sep=1pt] at (10.5,6.9)
    {antidiagonal\\$2n+k$};
  \foreach \x/\y in {1/2,2/4,5/8,6/10} {
    \node[earlier queen] at (\x,\y) {\BlackQueenOnWhite};
  }
  \foreach \x/\y in {3/1,4/3} {
    \node[earlier queen,text=queensLower] at (\x,\y) {\BlackQueenOnWhite};
  }
  \node[earlier queen,text=black] at (0,0) {\BlackQueenOnWhite};
  \filldraw[fill=white,draw=queensUpper,line width=0.8pt]
    (6.8,11.8) rectangle (7.2,12.2);
  \node[font=\footnotesize,anchor=south,yshift=-4.5pt] at (6,15.3) {$(n,n+k)$};
  \draw[->,>=stealth,black!55,line width=0.4pt]
    (6.3,15) .. controls (6.6,14.3) and (6.6,12.9) .. (6.95,12.25);
\end{tikzpicture}
\end{minipage}
\par\medskip
Note that Lemma~\ref{lem:upper} implies that successive upper rows differ by at least two.

The permutation property below follows from the row-permutation
argument of Rob Pratt, Bob Selcoe, and N.~J.~A.~Sloane (July~2, 2016)
in OEIS entry~\href{https://oeis.org/A269526}{A269526} \cite[Theorem~R]{OEIS}, reproduced in
\cite[Theorem~23]{DSS}; see also Knuth
\cite[exercise~7.2.2.1--37]{Knuth4B}. 
In combinatorial game theory terms, that theorem says that every
nonnegative Sprague--Grundy value occurs exactly once in each row and
column. The lemma below is its zero-value case. 
\begin{lemma}\label{lem:surjective}
The map $n\mapsto q_n$ is a permutation of $\N$.
\end{lemma}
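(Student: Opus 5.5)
Injectivity is immediate from the greedy rule: the condition $y\ne q_i$ for $0\le i<n$ in \eqref{eq:greedy} guarantees $q_n\ne q_i$ for all $i<n$. So the whole task is surjectivity. Fix a row $r\in\N$ and suppose, for contradiction, that $r\ne q_n$ for every $n$. The plan is to find a column $n$ in which the square $(n,r)$ is free of all attacks from earlier queens and every lower square $(n,y)$ with $y<r$ is attacked; the greedy rule then forces $q_n=r$.

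First I would show that eventually nothing below row $r$ is chosen. Each row $y<r$ contains at most one queen, by injectivity. So there is a column $N$ beyond which every queen lies at height at least $r$. Because row $r$ is never used, for $n>N$ the greedy choice $q_n$ is the least free square, and it must exceed $r$. Hence $(n,r)$ is attacked for every $n>N$, and the attack comes from a diagonal or an antidiagonal, never from row $r$ itself.

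Next I would count the attacks on $(n,r)$.
\begin{itemize}
\item \emph{Antidiagonals.} The square $(n,r)$ lies on antidiagonal $n+r$. It can only be attacked along it by a queen $(i,q_i)$ with $i+q_i=n+r$ and $i<n$, which forces $q_i>r$. Each such queen lies on a distinct antidiagonal, and all of these queens sit in the finite triangle $\{(i,y): i+y\le n+r\}$.
\item \emph{Diagonals.} Along a diagonal, $(n,r)$ lies on $r-n$. It is attacked by a queen with $q_i-i=r-n$, i.e.\ $q_i=r-(n-i)<r$. For $n>N$ such queens have $i\le N$, so only finitely many diagonals $q_i-i$ ($i\le N$) are relevant. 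Consequently only finitely many columns $n$ can have $(n,r)$ attacked diagonally.
\end{itemize}
It follows that, for all but finitely many $n$, the square $(n,r)$ is attacked antidiagonally. That is, for all large $n$ there is some $i<n$ with $i+q_i=n+r$ and $q_i>r$.

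The counting contradiction runs as follows. Take $M$ large and consider the columns $n\in[N'\dts M]$, where $N'$ is the threshold past which only antidiagonal attacks occur. Each such $n$ requires a distinct queen $(i,q_i)$ with antidiagonal sum exactly $n+r$, lying in a column $i<n$ and a row $q_i>r$. Now look at the columns $i\le M$ with $q_i>r$. Their antidiagonal sums $i+q_i$ are distinct, and we need them to cover every value in $[N'+r\dts M+r]$. Meanwhile each column $i>M$ has sum greater than $M+r$. This is an interval of roughly $M$ values, to be covered by the roughly $M$ queens in columns $\le M$.

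That is not yet contradictory, so I would add the rows. Rows $r+1,\dots$ must be filled too, but the key fact is that row $r$ is missing. Among columns $0,\dots,M$ the rows used are distinct and avoid $r$, so $\sum_{i\le M}(i+q_i)$ grows like $M^2$ with an extra shift. More decisively, I would use the free square argument in the other direction. Since row $r$ is never used, consider the queens whose antidiagonal sums lie in $[N'+r\dts M+r]$: they must all come from columns $i\le M$. The number of queens in columns $\le M$ with $q_i\ge M+r$ is at least $M-(M+r)+\#\{\text{missed rows}\}$. Here I expect the main obstacle to be making this inequality bite. The cleanest route is probably the classical one, as in \cite[Theorem~23]{DSS}: compare the row-filling and antidiagonal-filling densities, i.e.\ show that the antidiagonals forced to be filled by queens in rows $>r$ outnumber the available columns by at least one. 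I would try to set this up precisely as a pigeonhole count on the triangle $i+y\le M+r$.
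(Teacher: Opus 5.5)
Your reduction is correct and matches the first half of the paper's proof. Each row below $r$ holds at most one queen, and each such queen meets row $r$ diagonally in at most one square. So for all large $n$ the square $(n,r)$ is attacked along its antidiagonal, and every antidiagonal label from $K_0=N'+r$ on is occupied.

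The gap is the step after that. You never derive a contradiction from this tail, as you acknowledge (``making this inequality bite''), and pointing to the classical argument is not a proof. The counts you sketch also cannot work as stated:
\begin{itemize}
\item Counting queens in the triangle $x+y\le K$ gives nothing. There are $K+1$ squares with distinct rows and columns inside it (the antidiagonal $x+y=K$ itself), and that is at least the $K-K_0+1$ queens the tail supplies.
\item Summing $i+q_i$ over columns $i\le M$ gives a quadratic lower bound but no matching upper bound. Queens in those columns may lie on antidiagonals far above $M+r$.
\item Your proposed bound on the number of queens with $q_i\ge M+r$ is unjustified.
\end{itemize}

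The missing idea is to select the queens by antidiagonal label rather than by column, and then sum both coordinates. Take the $t$ queens on antidiagonals $K_0,K_0+1,\dots,K_0+t-1$. They are pairwise nonattacking, so their columns are distinct nonnegative integers with sum at least $0+1+\dots+(t-1)=t(t-1)/2$, and the same holds for their rows. Since each queen's column plus row equals its antidiagonal label,
\[
 t(t-1)\le\sum_{a=K_0}^{K_0+t-1}a=tK_0+\frac{t(t-1)}{2}.
\]
This forces $t\le 2K_0+1$, which fails for large $t$. This is the paper's argument. The distinct antidiagonal labels fix the total exactly, while the distinct rows and columns force it to grow quadratically. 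Your triangle count lacks precisely this combination.
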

\begin{proof}
The rows $q_n$ are distinct by construction, so it suffices to show that
every row is occupied by some queen. Suppose otherwise, and let $m$ be the least unoccupied row. 
For $n>m$, an earlier queen $(i,q_i)$ can attack $(n,m)$ diagonally
only if
\[
 n-m=i-q_i,\qquad q_i=m+i-n<m.
\]
Since there are at most $m$ such earlier queens, it follows that $(n,m)$ has no diagonal attackers for sufficiently large $n$. 
Now after some finite time, every row below $m$ has been occupied.
Since row $m$ remains unused by hypothesis,
the greedy rule requires $n+m$ to be an occupied antidiagonal for sufficiently large $n$. Hence the
set of occupied antidiagonals contains a tail $\{N,N+1,\dots\}$ of the integers.

We now show that such a tail is impossible. Choose an integer $t>2N+1$
and consider the $t$ queens occupying the antidiagonals
$N,N+1,\ldots,N+t-1$. These queens lie in distinct columns and distinct
rows. If we arrange their column coordinates in increasing order, the
first is at least $0$, the second at least $1$, and so on.

\par\medskip\noindent
\begin{minipage}[t]{0.38\linewidth}
\vspace{0pt}
\centering
\setboardfontsize{12pt}
\begin{tikzpicture}[x=0.43cm,y=0.43cm,font=\scriptsize,line cap=round,
  coordinate guide/.style={black!30,dash pattern=on 2pt off 2pt,line width=0.4pt},
  count queen/.style={inner sep=0pt,text=black}]
  \fill[queensUpper!7] (0,6) -- (0,9) -- (9,0) -- (6,0) -- cycle;
  \foreach \s in {7,8} {
    \draw[queensUpper!35,line width=0.45pt] (0,\s) -- (\s,0);
  }
  \foreach \s in {6,9} {
    \draw[queensUpper!75,line width=0.65pt] (0,\s) -- (\s,0);
  }
  \foreach \x/\y in {1/5,3/4,6/2,9/0} {
    \draw[coordinate guide] (\x,0) -- (\x,\y) -- (0,\y);
  }
  \draw[black!65,line width=0.5pt] (0,9.6) -- (0,0) -- (9.6,0);
  \node[anchor=south] at (0,9.65) {$y$};
  \node[anchor=west] at (9.65,0) {$x$};
  \foreach \x/\bound in {1/{\ge0},3/{\ge1},6/{\cdots},9/{\ge t-1}} {
    \draw[black!65,line width=0.4pt] (\x,0) -- (\x,-0.15);
    \node[anchor=north,inner sep=2pt] at (\x,-0.22) {$\bound$};
  }
  \foreach \y/\bound in {0/{\ge0},2/{\ge1},4/{\vdots},5/{\ge t-1}} {
    \draw[black!65,line width=0.4pt] (0,\y) -- (-0.15,\y);
    \node[anchor=east,inner sep=2pt] at (-0.22,\y) {$\bound$};
  }
  \node[text=queensUpper,rotate=-45,anchor=south,inner sep=2pt]
    at (3.6,2.4) {$N$};
  \node[text=queensUpper,rotate=-45,anchor=south,inner sep=2pt]
    at (4.5,4.5) {$N+t-1$};
  \foreach \x/\y in {1/5,3/4,6/2,9/0} {
    \node[count queen] at (\x,\y) {\BlackQueenOnWhite};
  }
\end{tikzpicture}
{\scriptsize One queen per antidiagonal (schematic).\par
The axis labels give lower bounds\par
on the coordinates in increasing order.\par}
\end{minipage}\hfill
\begin{minipage}[t]{0.60\linewidth}
\vspace{0pt}
Thus the sum of their column coordinates is at least
\[
 0+1+\cdots+(t-1)=\frac{t(t-1)}2.
\]
The same bound holds for their row coordinates. Thus the sum of all
their row and column coordinates is at least $t(t-1)$.

On the other hand, each queen's row coordinate plus its column coordinate
is precisely the label of its antidiagonal. Summing these labels gives
the same total as
\[
 N+(N+1)+\cdots+(N+t-1)=tN+\frac{t(t-1)}2.
\]
\end{minipage}
\par\medskip
\noindent Consequently,
\[
 t(t-1)\le tN+\frac{t(t-1)}2,
 \qquad\text{so}\qquad t\le2N+1.
\]
This contradicts our choice of $t$. Hence every row is occupied, as required.
\end{proof}

For $n\in\N$, define
\begin{equation}\label{eq:counts}
 U(n)=\#\{1\le i\le n:q_i>i\},\qquad L(n)=n-U(n).
\end{equation}
These count upper and lower queens through column $n$. There are infinitely many lower queens: successive upper rows differ by at least two by Lemma~\ref{lem:upper}, omitting infinitely many positive integers, and every row is occupied by Lemma~\ref{lem:surjective}.
Their columns satisfy $x_1^L<x_2^L<\cdots$, though the row values $y_j^L=q_{x_j^L}$ need not be increasing. We define the \emph{lower-diagonal magnitudes}
\begin{equation}\label{eq:rank-definition}
 d_j\coloneqq x_j^L-y_j^L>0\qquad(j\ge1),
\end{equation}
so that the $j$th lower queen lies on the $d_j$th lower diagonal.
The $d_j$ are distinct, and they are not increasing. Corollary~\ref{cor:diagonal-coverage}
will show that they form a permutation of the positive integers.

Let $v_1<v_2<\cdots$ be the lower-row values $y_j^L$, sorted into increasing order. We call $v_j$ the $j$th \emph{sorted} lower row
and $y_j^L$ the $j$th \emph{chronological} lower row (or just the $j$th lower row).

\begin{center}
\begin{minipage}{\linewidth}
\centering
\renewcommand{\arraystretch}{1.15}
\setlength{\tabcolsep}{6pt}
\begin{tabular}{@{}l*{15}{r}@{}}
\toprule
$j$     & 1 & 2 & 3 & 4 & 5 & 6 & 7 & 8 & 9 & 10 & 11 & 12 & 13 & 14 & 15 \\
\midrule
$x_j^L$ & 3 & 4 & 9 & 10 & 12 & 14 & 19 & 20 & 25 & 26 & 28 & 30 & 33 & 37 & 39 \\
$y_j^L$ & 1 & 3 & 5 & 7 & 6 & 9 & 11 & 13 & 15 & 17 & 16 & 19 & 20 & 22 & 23 \\
$d_j$   & 2 & 1 & 4 & 3 & 6 & 5 & 8 & 7 & 10 & 9 & 12 & 11 & 13 & 15 & 16 \\
$v_j$   & 1 & 3 & 5 & 6 & 7 & 9 & 11 & 13 & 15 & 16 & 17 & 19 & 20 & 22 & 23 \\
$U(j)$  & 1 & 2 & 2 & 2 & 3 & 4 & 5 & 6 & 6 & 6 & 7 & 7 & 8 & 8 & 9 \\
\bottomrule
\end{tabular}
\par
\captionof{table}{Lower-queen sequences and upper counts for $1\le j\le15$.}
\label{tab:lower-values}
\end{minipage}
\end{center}
\vspace{\baselineskip}

\begin{samepage}
The following identity is a special case of the Lambek--Moser theorem on
inverse and complementary sequences \cite{LM}, applied to $f(k)=x_k^U$. 
\begin{lemma}\label{lem:complement}
For every $j\ge1$, the $j$th sorted lower row $v_j$ satisfies
\begin{equation}\label{eq:sorted-rows}
 v_j=j+U(j-1).
\end{equation}
\end{lemma}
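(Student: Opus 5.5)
The idea is to use the permutation property (Lemma~\ref{lem:surjective}) to split $\N$ into upper rows, lower rows, and row $0$, and then count how many upper rows lie below a given lower row. Since every row is occupied, the positive integers are the disjoint union of the upper rows $\{y_k^U\}$ and the lower rows $\{v_j\}$; row $0$ is used by the queen at the origin. By Lemma~\ref{lem:upper}, the upper rows are $y_k^U=x_k^U+k$. Write $f(k)=x_k^U$, which is strictly increasing in $k$. Then the upper rows are $f(k)+k$, and the Lambek--Moser theorem says their complement in the positive integers is $\{j+f^*(j)\}$, where $f^*(j)=\#\{k: f(k)<j\}$. Here $f^*(j)$ counts upper queens in columns $1,\dots,j-1$, which is exactly $U(j-1)$. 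So it only remains to check that sorting works out index by index.

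I would prove this directly rather than by quoting the theorem. Fix $j\ge1$ and set $m=j+U(j-1)$. The plan is to show that exactly $j$ lower rows lie in $[1\dts m]$ and that $m$ itself is a lower row; together these give $v_j=m$. Count the upper rows $y_k^U=x_k^U+k$ that are at most $m$.

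Upper queens with $x_k^U\le j-1$ are exactly those with $k\le U(j-1)$. For them, $x_k^U+k\le (j-1)+U(j-1)<m$, so they count.

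Upper queens with $x_k^U\ge j$ have $k\ge U(j-1)+1$. For them, $x_k^U+k\ge j+U(j-1)+1>m$, so they do not count.

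Hence exactly $U(j-1)$ upper rows lie in $[1\dts m]$, and $m$ is not an upper row. By Lemma~\ref{lem:surjective}, the remaining $m-U(j-1)=j$ integers in $[1\dts m]$ are lower rows, and $m$ is one of them. Therefore $v_j=m$.

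The main obstacle is the edge cases. The first is that the queen at the origin, with $q_0=0$, is neither upper nor lower; this is handled by working in positive rows only. The second is confirming that no queen with $n\ge1$ has $q_n=n$. For $n\ge1$, the square $(n,n)$ lies on the main diagonal together with $(0,0)$, so it is attacked and no such queen exists. This makes the upper/lower dichotomy exhaustive, so every positive row is either upper or lower. With that settled, the counting is routine.
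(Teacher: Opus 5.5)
Your proof is correct and is essentially the paper's argument: set $r=j+U(j-1)$, show that the upper rows $x_k^U+k$ split into exactly $U(j-1)$ rows below $r$ with the rest above $r$, then use Lemma~\ref{lem:surjective} to conclude that the remaining $j$ rows in $[1\dts r]$, including $r$ itself, are lower rows. Your remarks about row $0$ and the main diagonal make explicit the upper/lower dichotomy that the paper uses without comment.
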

\noindent
\begin{minipage}[t]{0.60\linewidth}
\vspace{0pt}
\begin{proof}
Put $h=U(j-1)$ and $r=j+h$. We will show that row $r$ is a lower row
and that exactly $j$ lower rows lie in $[1\dts r]$. This identifies $r$
as the $j$th smallest lower row, namely $v_j$.

There are exactly $h$ upper queens in columns to the left of $j$. Each has an
index $k\le h$ and a column $x_k^U\le j-1$, so Lemma~\ref{lem:upper} gives
\[
 y_k^U=x_k^U+k\le(j-1)+h=r-1.
\]
Every upper queen in column $j$ or later instead has $k\ge h+1$ and
$x_k^U\ge j$, so
\[
 y_k^U=x_k^U+k\ge j+(h+1)=r+1.
\]
Thus no upper queen occupies row $r$, and exactly $h$ of the rows in
$[1\dts r]$ are upper rows.

By Lemma~\ref{lem:surjective}, every one of these positive rows is
occupied. The remaining $r-h=j$ rows are therefore lower rows, and
row $r$ itself is one of them. Hence $r$ is the largest of these $j$
lower rows, proving $v_j=r=j+U(j-1)$.
\end{proof}
\end{minipage}\hfill
\begin{minipage}[t]{0.38\linewidth}
\vspace{-0.5\baselineskip}
\centering
\setboardfontsize{11pt}
\begin{tikzpicture}[x=0.34cm,y=0.34cm,font=\scriptsize,line cap=round,
  example queen/.style={inner sep=0pt,fill=white},
  example guide/.style={dash pattern=on 3pt off 2pt,line width=0.5pt}]
  \node[font=\footnotesize,anchor=south] at (5,19.2) {$h=U(4)=2$};
  \foreach \y in {1,3,5,6} {
    \draw[queensLower!22,line width=0.35pt] (0,\y) -- (12.6,\y);
  }
  \foreach \y in {2,4} {
    \draw[queensUpper!30,line width=0.35pt] (0,\y) -- (12.6,\y);
  }
  \draw[black!55,line width=0.5pt] (0,19) -- (0,0) -- (13,0);
  \node[anchor=south] at (0,19.1) {$y$};
  \node[anchor=west] at (13,0) {$x$};
  \foreach \y in {1,3,5,6,7} {
    \node[anchor=east,text=queensLower,inner sep=2pt] at (-0.2,\y) {$\y$};
  }
  \foreach \y in {2,4,8,10,12,14,18} {
    \node[anchor=east,text=queensUpper,inner sep=2pt] at (-0.2,\y) {$\y$};
  }
  \foreach \x in {10,12} {
    \draw[black!55,line width=0.4pt] (\x,0) -- (\x,-0.2);
    \node[anchor=north,inner sep=2pt] at (\x,-0.25) {$\x$};
  }
  \draw[example guide,black!45] (5,0) -- (5,19);
  \node[anchor=north,inner sep=2pt] at (5,-0.25) {$j=5$};
  \draw[example guide,queensLower,line width=0.65pt] (0,7) -- (12.8,7);
  \node[anchor=west,text=queensLower,inner sep=2pt,yshift=0.8pt] at (12.8,7) {$r=7$};
  \foreach \x/\y in {1/2,2/4,5/8,6/10,7/12,8/14,11/18} {
    \node[example queen,text=queensUpper] at (\x,\y) {\BlackQueenOnWhite};
  }
  \foreach \x/\y in {3/1,4/3,9/5,12/6,10/7} {
    \node[example queen,text=queensLower] at (\x,\y) {\BlackQueenOnWhite};
  }
  \node[example queen,text=black] at (0,0) {\BlackQueenOnWhite};
  \draw[black!35,line width=0.5pt] (0,0) -- (12.6,12.6);
  \node[anchor=north,align=center,inner sep=2pt] at (6.5,-2.3)
    {Upper rows through $7$: \textcolor{queensUpper}{$2,4$}.\\
     Lower rows through $7$: \textcolor{queensLower}{$1,3,5,6,7$}.\\[2pt]
     Thus $v_5=7=5+U(4)$.};
\end{tikzpicture}
\end{minipage}
\par\medskip
\end{samepage}

%
\section{From diagonal discrepancy to the golden ratio}\label{sec:contraction}
We now combine the exact formula for the sorted lower rows (Lemma~\ref{lem:complement}) with a bound
on the lower-diagonal magnitudes (Lemma~\ref{lem:estimates}). The argument has three steps. First,
sorting converts the diagonal discrepancy bound $|d_j-j|\le4$ into an estimate for the lower
columns. That estimate then gives a recurrence for the upper count
$U(n)$. Finally, a contraction bounds the difference between $U(n)$ and
$n/\phi$, yielding the two error bounds in Theorem~\ref{thm:main}.

For complementary increasing sequences, Fraenkel and Peled
\cite[Theorem~4.3]{FP} show that $b_n-a_n=\gamma n+O(1)$, with $\gamma>0$,
implies bounded error from linear formulas for both sequences.
Larsson and W\"astlund \cite[Lemma~2.3]{LW} extend this to a nonmonotone
second sequence. Our upper columns and upper rows are not complementary
($2$ occurs in both), so we instead use the sorted-row identity to
obtain a contraction for $U$.

The following bound is the heart of our result. Sections~\ref{sec:local}--\ref{sec:finite} are devoted to proving it.
\begin{lemma}[Bounded diagonal discrepancy]\label{lem:estimates}
For every $j\ge1$, the $j$th lower-diagonal magnitude $d_j=x_j^L-y_j^L$ satisfies
\begin{equation}\label{eq:rank}
 |d_j-j|\le4.
\end{equation}
\end{lemma}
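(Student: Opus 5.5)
The plan is to prove \eqref{eq:rank} by a finite-state induction over the lower queens, in the order they are placed, as the organization paragraph announces. The basic observation is that the greedy choice in column $n$ depends only on a bounded window of information. By Lemma~\ref{lem:upper}, the upper diagonals are consumed in order $1,2,3,\dots$, so the upper queens block a prefix of the upper diagonals and nothing else diagonally. For a candidate lower square $(n,y)$ the relevant constraints are then three. The first is which lower diagonals near $n-y$ are occupied. The second is which rows near the sorted lower rows are used; these are controlled by Lemma~\ref{lem:complement}. The third is which antidiagonals $n+y$ are occupied, and these come mostly from upper queens at roughly $n+y=x+(x+k)$. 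I would encode all of this relative to the current index $j$. The state records the lower diagonals within a window $[j-c\dts j+c]$ that are occupied, the offsets $d_i-i$ of recent lower queens, and a \emph{history graph} recording the recent pattern of upper and lower columns. The history graph is what determines which antidiagonals near $n+y$ are occupied by upper queens.

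The steps, in order, are as follows.

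First, I would prove a sufficiency lemma under an inductive hypothesis: if all earlier lower queens satisfy $|d_i-i|\le4$, and certain window invariants hold, then the occupancy of every square the greedy rule could consider in column $n$ is a function of the encoded state alone. Concretely, I would show that any attacker of a candidate square outside the window is impossible. Far-away lower queens lie on diagonals differing by more than the window width. Upper queens of index far from $U(n)$ lie on antidiagonals far from $n+y$, because $y^U_k=x^U_k+k$ grows at a fixed rate. This step also decides whether column $n$ is upper or lower. Column $n$ is upper exactly when no admissible lower row exists below the next available upper row $n+U(n-1)+1$.

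Second, I would define the transition map on states and enumerate, by computer, the set of states reachable from the initial configuration. I would check that this set is finite and closed under transitions, and that every transition producing a lower queen yields an offset $d_j-j\in[-4\dts4]$.

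Third, an induction on $n$ shows that the actual greedy configuration after column $n$ always encodes to a state in the verified collection. Because the window invariants of the sufficiency lemma are exactly those certified by the enumeration, the hypothesis is maintained from one step to the next, and \eqref{eq:rank} follows.

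The main obstacle is the sufficiency step. The antidiagonal constraint couples a lower candidate $(n,y)$ to upper queens in columns near $(n+y)/\phi^2$, which is far back in time. The state must therefore carry enough of the upper/lower column pattern to reconstruct which antidiagonals in the current window are taken. The circularity is that the golden-ratio asymptotics (Section~\ref{sec:contraction}) are derived \emph{from} this lemma, so they cannot be used to bound how far back to look. My first attempt would be to track antidiagonal occupancy directly as a sliding bit-window around $n+v$, where $v$ is the next free sorted lower row from Lemma~\ref{lem:complement}. Since upper queens hit antidiagonals $2x^U_k+k$, which increase by at least $3$ per upper queen, each column update shifts this window by a bounded amount determined by the state. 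That keeps the encoding finite without any a priori rate estimate.
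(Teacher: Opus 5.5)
\textbf{There is a genuine gap: the step meant to make the encoding finite cannot work as stated.}

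Your architecture (finite records, a sufficiency lemma, exhaustive enumeration, induction along the actual board) is the paper's. The failure is structural, not technical. In Step~2 you ask for a \emph{deterministic} transition map whose reachable set from the initial configuration is finite, and in Step~3 you put the actual board on that orbit. Then the actual state sequence is eventually periodic. So is the indicator of upper columns, and $U(n)/n$ would have a rational limit, whereas Proposition~\ref{prop:stability}, fed by this very lemma, gives $1/\phi$.

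The underlying reason is the obstacle you identify but do not resolve: the upper queens that matter to a lower candidate $(n,m+r)$ are not recent. An upper queen in column $c$ lies on antidiagonal $2c+U(c)$. This can equal $n+m+r$ only for $c$ within a few places of the least unused row $m\approx n/\phi$. The queens of index near $U(n)$, which your sufficiency step singles out, lie on antidiagonals near $2n+U(n)$, far above $n+m+r<2n$. Likewise, whether a row near $m$ is upper depends on columns near $m/\phi$.

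Your sliding window around $n+v$ does shift by a bounded amount per column. But nothing bounded determines the \emph{contents} of the antidiagonals entering it. Those were fixed by upper queens placed around column $n/\phi$. When such a queen was placed in column $c$, its antidiagonal lay roughly $c$ places beyond the window of that time. Carrying that information forward deterministically takes memory linear in $n$.

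\textbf{The missing idea is to make the model nondeterministic and to certify that the actual run is always one of its branches.} The paper stores, at the \emph{reading} end, a bounded segment $H_{\mathrm{in}}Q$ of the queen word $\sigma$ (a column bit and a row bit per index) starting near $m$. It also stores the offset $z=n-m-U(m-1)$. Propositions~\ref{prop:upper-antidiagonal-test} and~\ref{prop:upper-row-count} show that these decide the upper row and antidiagonal attacks and the new row bit $b_n$.

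When a symbol past the end of $Q$ is needed (one produced about $n-m$ columns earlier), it is not remembered. Instead the calculation branches over every label leaving the current twelve-symbol window in a fixed finite history graph, and the computer checks every branch. The loop is closed at the \emph{writing} end: each produced $\sigma_n$ must label an edge leaving $H_{\mathrm{out}}$. So the induction can carry the invariant that $\sigma_1\dots\sigma_{n-1}$ follows the graph. That invariant is what guarantees that answering requests with the actual symbols is one of the explored branches.

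The invariant also needs a causality bound so that every symbol read is already determined. In the paper this is $z\ge-4$ together with $U(m-1)\ge12$, which keeps every requested index at most $m+6<n$ (Lemma~\ref{lem:local-exactness}). Without this nondeterministic supply of delayed symbols, and the output check tying it to the actual word, your Step~3 has no finite, closed object to induct over.
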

The remainder of this section shows how to deduce
Theorem~\ref{thm:main} from Lemma~\ref{lem:estimates}.

To interpret the diagonal discrepancy bound,
consider the $j$th lower queen. Among the columns $1,\ldots,x_j^L$,
exactly $j$ contain lower queens, so the other $x_j^L-j$ contain upper
queens. Therefore $U(x_j^L)=x_j^L-j$. Since $d_j=x_j^L-y_j^L$, we obtain
\begin{equation}\label{eq:rank-row}
 y_j^L-U(x_j^L)=j-d_j;
\end{equation}
thus the diagonal bound implies $|y_j^L-U(x_j^L)|\le4$. 

We next show
that replacing the chronological lower row $y_j^L$ by the $j$th sorted
lower row $v_j$ preserves the bound, giving $|v_j-U(x_j^L)|\le4$. This will let us
substitute the exact formula $v_j=j+U(j-1)$ from
Lemma~\ref{lem:complement}.

We use a general nonnegative integer $C$ in place of $4$ to keep track of
how the diagonal discrepancy bound affects the final constants. At the end we will take
$C=4$.
\begin{lemma}\label{lem:sorting}
Let $C\in\N$. If $|d_j-j|\le C$ for every $j\ge1$, then
$|v_j-U(x_j^L)|\le C$ for every $j\ge1$. Equivalently,
\begin{equation}\label{eq:lower-column}
 |x_j^L-2j-U(j-1)|\le C\qquad(j\ge1).
\end{equation}
\end{lemma}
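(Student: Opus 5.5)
The plan is the standard fact that sorting a sequence does not increase its sup-distance to a nondecreasing comparison sequence. Put $a_j=U(x_j^L)$. Since $x_1^L<x_2^L<\cdots$ and $U$ is nondecreasing, the sequence $(a_j)$ is nondecreasing in $j$. By \eqref{eq:rank-row}, $y_j^L-a_j=j-d_j$, so the hypothesis gives $|y_j^L-a_j|\le C$ for every $j\ge1$. The $y_j^L$ are distinct, because rows are distinct. By definition the $v_j$ are the same values listed in increasing order. So it suffices to show that $|v_j-a_j|\le C$.

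For the upper bound, I look at the $j$ chronological lower rows $y_1^L,\dots,y_j^L$. For $i\le j$ we have $y_i^L\le a_i+C\le a_j+C$ by monotonicity. These are $j$ distinct lower-row values, all at most $a_j+C$. Hence the $j$th smallest lower row satisfies $v_j\le a_j+C$.

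For the lower bound, I argue by counting which rows can lie below $a_j-C$. For every $i\ge j$ we have $y_i^L\ge a_i-C\ge a_j-C$. So any lower row strictly less than $a_j-C$ must be $y_i^L$ for some $i<j$, and there are at most $j-1$ such rows. Therefore the $j$th smallest lower row satisfies $v_j\ge a_j-C$. This step uses only that the enumeration $y_1^L,y_2^L,\dots$ lists every lower row, which holds by definition. There are infinitely many lower queens, so $v_j$ exists for every $j$.

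Finally, the equivalent form \eqref{eq:lower-column} is a substitution. Lemma~\ref{lem:complement} gives $v_j=j+U(j-1)$, and the counting remark before \eqref{eq:rank-row} gives $U(x_j^L)=x_j^L-j$. Hence
\[
v_j-U(x_j^L)=2j+U(j-1)-x_j^L,
\]
and the two inequalities are the same. I do not expect a real obstacle. The only point needing care is the lower bound, where the monotonicity of $a_i$ must be applied to the indices $i\ge j$ rather than $i\le j$.
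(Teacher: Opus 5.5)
Your proposal is correct and matches the paper's proof step for step. The paper also sets $t_j=x_j^L-j=U(x_j^L)$ and uses its monotonicity to get $v_j\le t_j+C$ from the first $j$ chronological rows and $v_j\ge t_j-C$ from the rows with index $i\ge j$. It then substitutes $v_j=j+U(j-1)$ from Lemma~\ref{lem:complement} to obtain the equivalent form.
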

\begin{proof}
Set $t_j=x_j^L-j=U(x_j^L)$. The sequence $t_j$ is nondecreasing since $x_j^L$ is increasing, and the hypothesis together with \eqref{eq:rank-row} gives
\[
 |y_j^L-t_j|\le C\qquad(j\ge1).
\]
Fix $j$. We first compare the $j$th sorted lower row $v_j$ with $t_j$.

For each $i\le j$, monotonicity gives $t_i\le t_j$, and hence
\[
 y_i^L\le t_i+C\le t_j+C.
\]
There are therefore at least $j$ lower-row values at most $t_j+C$.
Their $j$th smallest value must satisfy $v_j\le t_j+C$.

For the other inequality, every $i\ge j$ satisfies
\[
 y_i^L\ge t_i-C\ge t_j-C.
\]
Thus only the first $j-1$ chronological rows can lie below $t_j-C$,
and so the $j$th sorted row
must satisfy $v_j\ge t_j-C$. Together the two inequalities give
$|v_j-t_j|\le C$.

We now use Lemma~\ref{lem:complement}: substituting $v_j=j+U(j-1)$ and
$t_j=x_j^L-j$ gives
\[
 |x_j^L-2j-U(j-1)|=|t_j-v_j|\le C.\qedhere
\]
\end{proof}

Lemma~\ref{lem:sorting} says that the $j$th lower column is within $C$ of
$2j+U(j-1)$. To turn this into information about an arbitrary column
$n$, we take $j=L(n)$, the number of lower queens placed through column
$n$. When $j\ge1$, the column $n$ lies between two successive lower columns: $x_j^L\le n<x_{j+1}^L$.
Applying the estimate of \eqref{eq:lower-column} at both endpoints will give a recurrence
$U(n)=\frac{1}{2}(n+U(n-U(n))-\eta_n)$ with $|\eta_n|\le C+1$.
\begin{lemma}[Counting recurrence]\label{lem:counting-recursion}
Let $C\in\N$, and suppose
$|x_j^L-2j-U(j-1)|\le C$ for every $j\ge1$. Then, with
$L(n)=n-U(n)$,
\begin{equation}\label{eq:integer-recursion}
 \eta_n\coloneqq n-2U(n)+U(L(n)),\qquad |\eta_n|\le C+1\quad(n\ge0).
\end{equation}
\end{lemma}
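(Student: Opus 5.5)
Fix $n\ge0$ and put $j=L(n)$. The plan is to sandwich $n$ between the consecutive lower columns $x_j^L\le n<x_{j+1}^L$ and apply the hypothesis at both ends. The case $j=0$ is handled separately. It can only occur for $n\le2$, since $x_1^L=3$. There $\eta_0=0$, $\eta_1=1-2+0=-1$ and $\eta_2=2-4+0=-2$. Hence $|\eta_2|=2\le C+1$ needs $C\ge1$, and I will check whether the intended application, $C=4$, covers it. If a general $C=0$ is required, I would revisit by noting that the hypothesis at $j=1$ gives $|3-2|\le C$, so $C\ge1$ automatically. That disposes of the small cases.

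For $j\ge1$, the columns $x_j^L+1,\dots,n$ are all upper. Hence $U(n)=U(x_j^L)+(n-x_j^L)$, and since $U(x_j^L)=x_j^L-j$ this is just $U(n)=n-j$, consistent with $L(n)=j$. The content of the claim is therefore the statement
\[
 n-2(n-j)+U(j)=2j-n+U(j)\in[-C-1\dts C+1],
\]
that is, $n$ lies within $C+1$ of $2j+U(j)$.

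The two inequalities come from the two endpoints. At the lower endpoint $n\ge x_j^L\ge 2j+U(j-1)-C$. Since $U(j)\le U(j-1)+1$, this gives $n\ge 2j+U(j)-C-1$, so $\eta_n\le C+1$. At the upper endpoint $n\le x_{j+1}^L-1\le 2(j+1)+U(j)+C-1=2j+U(j)+C+1$, so $\eta_n\ge -C-1$. This completes the argument.

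The only delicate points are bookkeeping. One must use that consecutive lower columns bracket every $n$ with $L(n)=j$, and this needs infinitely many lower queens, which the paper has already noted. One must also use the unit increment $U(j)-U(j-1)\in\{0,1\}$, which is where the extra $+1$ in $C+1$ comes from.
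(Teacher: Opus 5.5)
Your proof is correct and follows the paper's argument: you bracket $n$ between $x_j^L$ and $x_{j+1}^L$ with $j=L(n)$, apply the hypothesis at both endpoints, and absorb $U(j)-U(j-1)\le1$ and integrality into the extra $+1$. The only difference is the case $j=0$: the paper reads $x_1^L\le C+2$ off the hypothesis at index $1$ (so $n\le C+1$ and $|\eta_n|=n\le C+1$) without using concrete values, whereas you use $x_1^L=3$ and note that the hypothesis then forces $C\ge1$; both are valid.
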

\begin{proof}
Fix $n\ge1$, and let
$j=L(n)=n-U(n)$. If $j\ge1$, exactly $j$ lower queens have been placed
through column $n$, so
\[
 x_j^L\le n<x_{j+1}^L.
\]
The assumed estimate, applied to the $j$th and $(j+1)$st lower
columns, bounds these two endpoints and gives
\[
 2j+U(j-1)-C\le n<2j+2+U(j)+C.
\]
The count $U$ increases by at most one between consecutive columns, so
$U(j-1)\ge U(j)-1$. Also, the strict upper bound can be decreased by one
because all the quantities are integers. It follows that
\[
 2j+U(j)-(C+1)\le n\le2j+U(j)+(C+1).
\]
Thus $|2j+U(j)-n|\le C+1$. Substituting $j=n-U(n)=L(n)$,
the expression inside the absolute value becomes $\eta_n$.
This proves the bound when $j\ge1$.

If $j=0$, no lower queen has yet appeared, so $n<x_1^L$. The estimate
\eqref{eq:lower-column} with index $1$, together with $U(0)=0$, gives
$x_1^L\le C+2$. Hence $n\le C+1$. Since $L(n)=j=0$, we have $U(n)=n$, 
so $\eta_n=-n$ and the same bound holds. At $n=0$ we have $\eta_0=0$.
We have therefore established \eqref{eq:integer-recursion} for every
$n\ge0$.
\end{proof}

Lemma~\ref{lem:counting-recursion} converts the lower-column estimate
\eqref{eq:lower-column} into the relation~\eqref{eq:integer-recursion}
between $U(n)$ and $U(n-U(n))$, with error at most $C+1$.
We now use this relation to bound $|U(n)-n/\phi|$.

\begin{proposition}[Contraction and queen positions]\label{prop:stability}
Let $C\in\N$, and suppose $|d_j-j|\le C$ for every $j\ge1$. Then
\begin{equation}\label{eq:Dbound}
 |U(n)-\phi^{-1}n|<\phi^{-1}(C+1)\qquad(n\ge0).
\end{equation}
The upper and lower queen positions satisfy
\[
 \begin{aligned}
 |q_n-n\phi|&<\phi^{-1}(C+1)&&\text{if }q_n>n,\\[3pt]
 \Bigl|q_n-\frac{n}{\phi}\Bigr|&<C+\phi^{-1}(C+1)&&\text{if }q_n<n.
 \end{aligned}
\]
\end{proposition}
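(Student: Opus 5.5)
The plan is to set $D(n)=U(n)-\phi^{-1}n$, turn the counting recurrence of Lemma~\ref{lem:counting-recursion} into an exact linear relation between $D(n)$ and $D(L(n))$ with contraction factor $\phi^{-2}$, and then run a strong induction on $n$. The hypothesis $|d_j-j|\le C$ gives, through Lemma~\ref{lem:sorting}, the hypothesis of Lemma~\ref{lem:counting-recursion}. So for every $n\ge0$ we have $2U(n)=n+U(L(n))-\eta_n$ with $|\eta_n|\le C+1$.

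\textbf{Step 1: the exact relation.} First write
\[
 L(n)=n-U(n)=(1-\phi^{-1})n-D(n)=\phi^{-2}n-D(n).
\]
Then expand
\[
 U(L(n))=\phi^{-1}L(n)+D(L(n))=\phi^{-3}n-\phi^{-1}D(n)+D(L(n)).
\]
Substituting both into $2(\phi^{-1}n+D(n))=n+U(L(n))-\eta_n$, the terms linear in $n$ cancel, because $2\phi^{-1}=1+\phi^{-3}$ (both sides equal $2\phi-2$). Since $2+\phi^{-1}=1+\phi=\phi^2$, what remains is
\[
 D(n)=\phi^{-2}\bigl(D(L(n))-\eta_n\bigr).
\]

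\textbf{Step 2: strong induction.} The claim is $|D(n)|<M:=\phi^{-1}(C+1)$ for all $n\ge0$. The base case is $D(0)=0$. For $n\ge1$ we have $U(n)\ge1$, since $q_1=2$ makes the first queen upper, so $L(n)<n$ and the inductive hypothesis applies to $L(n)$. Then
\[
 |D(n)|<\phi^{-2}\bigl(M+C+1\bigr)=(C+1)\phi^{-2}(\phi^{-1}+1)=(C+1)\phi^{-1},
\]
using $\phi^{-1}+1=\phi$. This proves \eqref{eq:Dbound}. The inequality is strict because the bound on $|D(L(n))|$ is strict.

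\textbf{Step 3: queen positions.} For an upper queen in column $n$, it is the $k$th upper queen with $k=U(n)$. Lemma~\ref{lem:upper} gives $q_n=n+U(n)$, so $q_n-n\phi=U(n)-\phi^{-1}n=D(n)$, and Step 2 gives the first bound. For a lower queen in column $n=x_j^L$, equation \eqref{eq:rank-row} gives $q_n=y_j^L=U(n)+j-d_j$. Hence
\[
 |q_n-n/\phi|\le|j-d_j|+|D(n)|<C+\phi^{-1}(C+1).
\]

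The only delicate point is the algebra in Step 1, specifically checking that the linear terms cancel exactly and that the coefficient of $D(n)$ is $\phi^2$. The other point needing care is ensuring $L(n)<n$ so that the induction is well-founded. Everything else is routine.
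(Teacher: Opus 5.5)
Your proof is correct and follows the paper's argument. It uses the same exact relation $D(n)=\phi^{-2}\bigl(D(L(n))-\eta_n\bigr)$, the same appeal to $q_1=2$ to get $L(n)<n$, and the same translation to queen positions via Lemma~\ref{lem:upper} and \eqref{eq:rank-row}. The only cosmetic difference is in bounding the error: the paper unrolls the recursion along $n,L(n),L(L(n)),\dots,0$ and sums a geometric series, while you run a strong induction using the fixed point $M=\phi^{-2}(M+C+1)$, which amounts to the same argument.
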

\begin{proof}
Lemmas~\ref{lem:sorting} and~\ref{lem:counting-recursion} give
\eqref{eq:integer-recursion}. We now use it to bound the count $U(n)$, 
and then the queen positions $q_n$.

To see why the golden ratio appears, consider a linear model
$U(n)\approx\theta n$. Then $L(n)\approx(1-\theta)n$, and the expression
defining $\eta_n$ would have leading term
\[
 \bigl(1-2\theta+\theta(1-\theta)\bigr)n.
\]
For this expression to stay bounded, its coefficient must vanish:
\[
 1-2\theta+\theta(1-\theta)=0,
 \qquad\text{or equivalently}\qquad \theta^2+\theta=1.
\]
The positive solution is $\theta=\phi^{-1}$. We now bound the error
without assuming such a model.

Put
\[
 \varepsilon(n)=U(n)-\phi^{-1}n.
\]
Since $1-\phi^{-1}=\phi^{-2}$, we also have
\[
 L(n)=n-U(n)=\phi^{-2}n-\varepsilon(n).
\]
Substitute this expression and
$U(L(n))=\phi^{-1}L(n)+\varepsilon(L(n))$ into
\eqref{eq:integer-recursion}. The terms proportional to $n$ cancel,
leaving
\[
 \eta_n=\varepsilon(L(n))-(2+\phi^{-1})\varepsilon(n).
\]
Because $2+\phi^{-1}=\phi^2$, we can solve for $\varepsilon(n)$:
\begin{equation}\label{eq:contraction}
 \varepsilon(n)=\phi^{-2}\varepsilon(L(n))-\phi^{-2}\eta_n.
\end{equation}
The factor $\phi^{-2}$ is less than one. Taking absolute values and using
$|\eta_n|\le C+1$ gives
\[
 |\varepsilon(n)|\le\phi^{-2}|\varepsilon(L(n))|+\phi^{-2}(C+1).
\]
This bounds the error at $n$ by a smaller multiple of the error at
$L(n)$, plus a fixed amount.

For $n\ge1$, the upper queen $q_1=2$ ensures $U(n)\ge1$, so
$0\le L(n)<n$. Repeatedly replacing the argument $n$ by its lower count $L(n)$
therefore reaches $0$ after finitely many steps. If this takes $t$
steps, repeated application of the last inequality, with $\varepsilon(0)=0$,
yields
\[
 |\varepsilon(n)|\le(C+1)\sum_{\ell=1}^{t}\phi^{-2\ell}
 <\frac{\phi^{-2}(C+1)}{1-\phi^{-2}}=\phi^{-1}(C+1).
\]
Since $\varepsilon(0)=0$, the bound also holds at $n=0$.
This proves \eqref{eq:Dbound}.

It remains to translate the count estimate into bounds on the queen
positions. If column~$n$ contains an upper queen, its index among the
upper queens is $U(n)$. Lemma~\ref{lem:upper} therefore gives
$q_n=n+U(n)$, and hence
\[
 q_n-n\phi=U(n)-(\phi-1)n=\varepsilon(n).
\]
The upper error is consequently less than $\phi^{-1}(C+1)$.

If column $n$ contains the $j$th lower queen, then $n=x_j^L$ and
\eqref{eq:rank-row} gives $q_n-U(n)=j-d_j$. The diagonal discrepancy bound therefore
implies $|q_n-U(n)|\le C$. Combining this with the count estimate gives
\[
 \Bigl|q_n-\frac{n}{\phi}\Bigr|
 \le |q_n-U(n)|+|\varepsilon(n)|
 <C+\phi^{-1}(C+1).\qedhere
\]
\end{proof}

\begin{remark}[$1$-indexed coordinates]
Under the hypotheses of Proposition~\ref{prop:stability}, changing to
$1$-indexed coordinates gives a common error bound $1+C\phi$ for $C\ge1$.
Indeed, the same queen has column $c=n+1$
and row $s(c)=q_n+1$. Thus
\[
 \begin{aligned}
 s(c)-c\phi&=(q_n-n\phi)-\phi^{-1},\\
 s(c)-\frac{c}{\phi}&=\Bigl(q_n-\frac{n}{\phi}\Bigr)+\phi^{-2}.
 \end{aligned}
\]
The $1$-indexed upper error is therefore less than $(C+2)\phi^{-1}$,
and the $1$-indexed lower error is less than
$C+\phi^{-1}(C+1)+\phi^{-2}$. For $C\ge1$, the latter is a common bound,
since
\[
 (C+2)\phi^{-1}\le
 C+\phi^{-1}(C+1)+\phi^{-2}=1+C\phi.
\]
Knuth \cite[answer 7.2.2.1--38]{Knuth4B} observed
computationally that, for $1\le c\le10^9$,
\[
 s(c)\in
 \Bigl[\frac{c}{\phi}-3\dts\frac{c}{\phi}+5\Bigr]
 \cup[c\phi-2\dts c\phi+1].
\]
For comparison, translating Theorem~\ref{thm:main} to $1$-indexed
coordinates and rounding gives, for every $c\ge1$,
\[
 s(c)\in
 \Bigl[\frac{c}{\phi}-6.709\dts\frac{c}{\phi}+7.473\Bigr]
 \cup[c\phi-3.709\dts c\phi+2.473].
\]
Using the algorithm of Section~\ref{sec:generation}, we extended the
computational check of Knuth's ranges to $1\le c\le10^{11}$ and found
no violations. The code and execution records are described in
Appendix~\ref{app:stream-generation}.
Section~\ref{sec:sharper} proves the upper halves of
Knuth's ranges; whether the lower halves hold for every $c\ge1$
remains open.
\end{remark}

\begin{proof}[Proof of Theorem~\ref{thm:main}]
Lemma~\ref{lem:estimates} supplies the hypothesis of
Proposition~\ref{prop:stability} with $C=4$. The proposition gives an
upper error strictly less than $\phi^{-1}(4+1)=5/\phi$ and a lower error
strictly less than $4+\phi^{-1}(4+1)=4+5/\phi$. These are exactly the two
bounds asserted in Theorem~\ref{thm:main}.
\end{proof}

%
\section{Computing one queen from local records}\label{sec:local}

We now begin our work on proving $|d_j-j|\le4$ 
(Lemma~\ref{lem:estimates}). 
While the greedy-queens process is deterministic, describing its state 
by the entire configuration of earlier queens gives an infinite 
collection of growing states.
This section introduces a notion of \emph{local state} (Definition~\ref{def:local-state}) 
that tracks only selected information about earlier queens. 
Doing so will allow us to model the entire infinite process using only 
finitely many states. After introducing local states, the rest of this
section details an algorithm (Algorithm~\ref{alg:local-successors}) for how one can use the local state before
column~$n$ to place a queen on column~$n$ and update the state for the
next column. However, since the local state only has partial information
about the board, different configurations of earlier queens may yield the
same local state. Thus the algorithm will sometimes return several possible
next placements and states.

Section~\ref{sec:sufficiency} proves that, if certain conditions hold, then there always 
exists a branch of the algorithm of Section~\ref{sec:local} that agrees with the 
greedy algorithm on where to place the next queen and how to update 
the records.

Section~\ref{sec:finite} then computationally verifies that the initial state of 
the board satisfies those conditions, and that exploring every 
possible successor yields a finite collection of states, all 
satisfying those conditions (the state graph, Definition~\ref{def:state-graph}).
Finally, induction using the results of Section~\ref{sec:sufficiency} proves that
the actual greedy process produces states that always stay
within the state graph.
Together with a direct check of the initial placements,
the conditions imply that $|d_j-j|\le4$ for all lower queen
placements, which proves Lemma~\ref{lem:estimates}.

The next two subsections start our work by introducing the eight
records that comprise a local state.

\subsection{Testing lower attacks}\label{sec:local-rank}
Imagine that we are now trying to place a queen in column $n$.
We first test if there is a free lower square for our queen to use.
Writing $m\ge1$ for the least unused row and $d\ge1$ for the least 
unused lower-diagonal magnitude, we see that a lower square can
be chosen only if its row lies in $[m\dts n-d]$.
Indeed, all rows below $m$ are used, and the square $(n, n-i)$ lies on 
the $i$th lower diagonal, which is used for $i<d$.
 
Write
\begin{equation}\label{eq:window}
 w=n-m-d,
\end{equation}
so that the lower squares to test are $(n,m+r)$ for offsets $r=0,\dots,w$.
If $w<0$, there are no lower candidates, so the queen in column $n$
is upper. If $w\ge0$, it is possible for all $w+1$ candidate squares
to be attacked, in which case the queen is upper as well.

To test candidates, the local state tracks attacks from lower and upper
queens separately. Upper attack records will be discussed in Section~\ref{sec:upper-word}.
To track lower attacks, the local state maintains three sets $R,D,A$ of
nonnegative integers. Consider all earlier lower queens; that is, lower
queens before column $n$. For each such queen $(x,y)$, its row,
lower-diagonal magnitude, and antidiagonal index are $y$, $x-y$,
and $x+y$. We record their offsets from the reference values $m$,
$d$, and $n+m$, keeping only nonnegative values (as the negative values cannot attack our candidate squares):
\begin{align}
 R&=\{y-m:(x,y)\text{ is an earlier lower queen},\ y\ge m\},\notag\\
 D&=\{x-y-d:(x,y)\text{ is an earlier lower queen},\ x-y\ge d\},
       \label{eq:sets}\\
 A&=\{x+y-(n+m):(x,y)\text{ is an earlier lower queen},\ x+y\ge n+m\}.
       \notag
\end{align}
It follows that
\begin{center}
\begin{tabular}{lll}
\toprule
an earlier lower queen attacks $(n,m+r)$&when&record used\\
\midrule
along its row&$r\in R$&row offset\\
along its diagonal&$w-r\in D$&diagonal offset\\
along its antidiagonal&$r\in A$&antidiagonal offset\\
\bottomrule
\end{tabular}
\end{center}
Since row $m$ and magnitude $d$ are unused, we have
$0\notin R$ and $0\notin D$.

The diagonal discrepancy already has a useful expression in these
records. Suppose the next queen is the $j$th lower queen and it uses
offset $r$, so that it has coordinates $(n,m+r)$. 
Since the $j-1$ previous lower queens have used the $d-1$
magnitudes less than $d$, together with the $|D|$ magnitudes greater than it, it follows that
\begin{equation}\label{eq:local-rank}
 j=d+|D|,\qquad d_j=d+w-r,\qquad
 d_j-j=w-r-|D|.
\end{equation}
In particular, $w\le4$ and $D\subseteq[1\dts4]$ would prove the
desired bound, since whenever a lower queen is chosen, 
both $w-r$ and $|D|$ would then belong to $[0\dts4]$. 
We will establish these bounds in Section~\ref{sec:finite}.

\subsection{Testing upper attacks}\label{sec:upper-word}

We now test if a candidate lower queen $(n,m+r)$ is attacked
by any upper queens. Upper queens cannot attack lower queens
along diagonals, so we only need to test if a lower candidate
is attacked by an upper queen along a row or an antidiagonal.

We first introduce an infinite word that tracks upper rows and columns.

\begin{definition}[Queen word]\label{def:queen-word}
For $i\ge1$, let $u_i=[\text{column $i$ has an upper queen}]=[q_i>i]$ and
$b_i=[\text{row $i$ has an upper queen}]$.
The \emph{queen word} is $\sigma=\sigma_1\sigma_2\dots$, where
its \emph{$i$th symbol}
\[
 \sigma_i=2u_i+b_i\in\{0,1,2,3\}
\]
records whether column $i$ and row $i$ contain upper queens.
We call $u_i$ and $b_i$ the \emph{column bit} and the
\emph{row bit} of $\sigma_i$.
\end{definition}

Before column $n$, the prefix $\sigma_1\dots\sigma_{n-1}$
is determined by the queens already placed: for $i<n$,
column $i$ has been filled, and any upper queen in row $i$
must lie in a column less than $i$.

The local state will store subwords of the queen word from
two locations: near index $m$, for testing the lower candidates;
and near index $n$, for checking the new symbol $\sigma_n$
produced at the current step.
The records maintained at these two indices are
\[
 \begin{array}{ll}
 \text{the input history}&H_{\mathrm{in}}=\sigma_{m-12}\dots\sigma_{m-1},\\
 \text{the queue}&Q=\sigma_m\dots\sigma_{m+|Q|-1},\\
 \text{the output history}&H_{\mathrm{out}}=\sigma_{n-12}\dots\sigma_{n-1}.
 \end{array}
\]
The queue $Q$ is a nonempty subword beginning at index $m$.
Unlike the two twelve-symbol histories, its length may vary.

We introduce one final record. The upper-queen tests of
Section~\ref{app:upper} compare positions of upper queens with
positions involving the current column. By Lemma~\ref{lem:upper},
the upper queen in column $c$ has row $c+U(c)$, which involves the
growing count $U$. The state records the single quantity
\[
 z=n-m-U(m-1),
\]
which Section~\ref{app:upper} shows is enough to make these
comparisons. Together with the records from
Section~\ref{sec:local-rank}, we get the eight fields that comprise
a local state:

\begin{definition}[Local state]\label{def:local-state}
A (local) \emph{state} is a tuple
\begin{equation}\label{eq:state}
 (w,z,R,D,A,H_{\mathrm{in}},Q,H_{\mathrm{out}}).
\end{equation}
Here $w,z$ are integers, $R,D,A$ are finite sets of nonnegative
integers, $H_{\mathrm{in}},H_{\mathrm{out}}$ are words of length
twelve over $\{0,1,2,3\}$, and $Q$ is a nonempty word over the
same alphabet. On an actual partial board, the fields have the
meanings discussed in Section~\ref{sec:local-rank} and this subsection;
but we also allow states that do not correspond to actual board
configurations.
\end{definition}
Note that the absolute indices $n,m,d$ are not stored in the local state.
As a concrete example of a state, immediately before column $n=41$,
we have $m=25$, $d=14$, and $U(24)=16$, so
\[
 w=n-m-d=2,\qquad z=n-m-U(m-1)=0,\qquad
 R=A=\varnothing,\quad D=\{1,2\}.
\]
Here $H_{\mathrm{in}}=\sigma_{13}\dots\sigma_{24}=\texttt{212223003223}$,
$Q=\sigma_{25}\dots\sigma_{29}=\mathtt{01212}$, and
$H_{\mathrm{out}}=\sigma_{29}\dots\sigma_{40}=\texttt{212203230303}$.
The lower candidates are rows $(m,\dots,m+w)=(25,26,27)$. Their symbols
$\sigma_{25}\sigma_{26}\sigma_{27}=\texttt{012}$ have row bits
$b_{25}b_{26}b_{27}=\texttt{010}$. Thus the candidate $(41,26)$ is attacked along its row
by an upper queen. Figure~\ref{fig:state-portrait} shows all eight records.
\begin{figure}[!b]
\centering
\setlength{\abovecaptionskip}{0pt}
\begin{tikzpicture}[x=1cm,y=1cm,font=\footnotesize,>=stealth]
 \node[anchor=west,font=\small] at (.2,4.8) {The board before column $41$};
 \node[anchor=west,font=\small] at (6.5,4.8) {The lower attack records};
 \begin{scope}[shift={(.75,1.15)},x=.51cm,y=.51cm]
  \setboardfontsize{13pt}
  \foreach \xx in {0,...,5} \foreach \yy in {0,...,6} {
   \pgfmathtruncatemacro{\parity}{mod(\xx+\yy,2)}
   \ifnum\parity=0 \fill[black!4] (\xx-.5,\yy-.5) rectangle (\xx+.5,\yy+.5);\fi
  }
  \fill[queensLower!13] (3.5,2.5) rectangle (4.5,5.5);
  \draw[black!50] (4,-.5)--(4,6.5);
  \draw[queensLower!55,densely dashed] (-.5,.5)--(5.4,6.4);
  \draw[queensLower,thick] (0,0)--(4.45,4.45);
  \draw[queensLower,thick] (2,1)--(4.45,3.45);
  \draw[queensUpper,dashed] (-.5,4)--(4.5,4);
  \node[text=queensLower,inner sep=0pt] at (0,0) {\BlackQueenOnWhite};
  \node[text=queensLower,inner sep=0pt] at (2,1) {\BlackQueenOnWhite};
  \foreach \yy in {3,4,5} \node[queensLower] at (4,\yy) {$\times$};
  \node[anchor=east,text=queensLower] at (1.05,2.5) {$d=14$};
  \node[anchor=west] at (4.6,3) {$m=25$};
  \node[anchor=west] at (4.6,5) {$27$};
  \draw[<->,black!65] (7.85,3)--(7.85,5);
  \node[anchor=west] at (8.05,4) {$w=2$};
  \foreach \xx/\lab in {0/37,2/39,4/41}
   \node[anchor=north,text=black!65] at (\xx,-.7) {$\lab$};
  \node[anchor=south,queensUpper,font=\scriptsize] at (1.2,4.08) {upper row $26$};
 \end{scope}
 \draw[->,black!45] (5.15,2.3)--(6.05,2.3);
 \node[align=center,text=black!60,font=\scriptsize] at (5.6,1.77) {record\\offsets};
 \foreach \yy/\letter/\recset in {4.05/R/\varnothing,2.75/D/{\{1,2\}},1.45/A/\varnothing} {
  \node[anchor=west] at (6.5,\yy) {$\letter=\recset$};
  \draw[->,black!35] (9.25,\yy)--(14.3,\yy);
  \foreach \i in {0,...,4} {
   \draw[black!40] ({9.6+.9*\i},\yy-.08)--({9.6+.9*\i},\yy+.08);
   \node[anchor=south,font=\scriptsize,text=black!65] at ({9.6+.9*\i},\yy+.12) {$\i$};
  }
 }
 \foreach \i in {1,2}
  \filldraw[fill=queensLower!25,draw=queensLower] ({9.6+.9*\i-.16},2.59) rectangle ({9.6+.9*\i+.16},2.91);
 \node[anchor=west,text=black!60,font=\scriptsize] at (6.5,3.62) {rows measured from $m=25$};
 \node[anchor=west,text=black!60,font=\scriptsize] at (6.5,2.32) {magnitudes $15,16$, measured from $d=14$};
 \node[anchor=west,text=black!60,font=\scriptsize] at (6.5,1.02) {antidiagonals measured from $n+m=66$};
 \draw[black!15] (.2,.12)--(14.4,.12);
 \node[anchor=west,font=\small] at (.2,-.3) {The upper-word records};
 \node[anchor=west,font=\scriptsize,text=black!80] at (.2,-.72) {$z=n-m-U(m-1)=41-25-16=0$};
 \node[anchor=east,text=black!60,font=\scriptsize] at (14.4,-.3) {filled cell $=1$; empty cell $=0$};
 \begin{scope}[yshift=-.45cm]
 \foreach \s [count=\j from 0] in {2,1,2,2,2,3,0,0,3,2,2,3,0,1,2,1,2,1,2,2,0,3,2,3,0,3,0,3} {
  \pgfmathtruncatemacro{\ub}{floor(\s/2)}
  \pgfmathtruncatemacro{\bb}{mod(\s,2)}
  \ifnum\ub=1 \def\ucolor{queensUpper!55}\else\def\ucolor{white}\fi
  \ifnum\bb=1 \def\bcolor{queensUpper!55}\else\def\bcolor{white}\fi
  \filldraw[fill=\ucolor,draw=black!25,line width=.25pt] ({2.7+.39*\j},-1.4) rectangle ({3.09+.39*\j},-1.04);
  \filldraw[fill=\bcolor,draw=black!25,line width=.25pt] ({2.7+.39*\j},-1.9) rectangle ({3.09+.39*\j},-1.54);
 }
 \node[anchor=east] at (2.45,-1.22) {upper column $u_i$};
 \node[anchor=east] at (2.45,-1.72) {upper row $b_i$};
 \foreach \j/\lab in {0/13,12/25,16/29,27/40}
  \node[anchor=north,text=black!60,font=\scriptsize] at ({2.895+.39*\j},-2.02) {$\lab$};
 \draw[decorate,decoration={brace,amplitude=4pt},black!65] (8.94,-.86)--(13.62,-.86);
 \node at (11.28,-.52) {$H_{\mathrm{out}}$};
 \draw[decorate,decoration={brace,mirror,amplitude=4pt},black!65] (2.7,-2.48)--(7.38,-2.48);
 \node at (5.04,-2.88) {$H_{\mathrm{in}}$};
 \draw[decorate,decoration={brace,mirror,amplitude=4pt},black] (7.38,-2.48)--(9.33,-2.48);
 \node[text=black] at (8.355,-2.88) {$Q$};
 \draw[black!55] (7.38,-1)--(7.38,-1.94);
 \draw[->,black!55] (13.95,-2.45)--(13.95,-1.9);
 \node[anchor=north,text=black!65,font=\scriptsize] at (13.95,-2.52) {$n=41$};
 \end{scope}
\end{tikzpicture}
\par\nointerlineskip
\caption{The eight records $(w,z,R,D,A,H_{\mathrm{in}},Q,H_{\mathrm{out}})$ of the local state before column $41$.}
\label{fig:state-portrait}
\end{figure}

The separation into lower attack records and information about
upper queens is closely related to Carter's reduced description
\cite[Section~5]{Carter}.

\subsection{The history graph}\label{app:graph}
Given a state before column $n$, we use the term 
\emph{calculation} to refer to
the algorithm outlined at the start of this section: it chooses the
queen in column~$n$, produces the new symbol $\sigma_n$, and updates
the eight records for column $n+1$. It reads only the stored records
and the history graph defined below, so it can be carried out on any
state, whether or not that state comes from the actual board.
Section~\ref{app:output} gives it in full as
Algorithm~\ref{alg:local-successors}. As this subsection explains,
it may produce several possible successor states.

The upper-attack tests read their bits from the queue. In the
column-$41$ example, the three candidate rows $25,26,27$ all lie
within $Q=\sigma_{25}\dots\sigma_{29}$, so their row bits
can be read directly. In general, however, the calculation may
need a symbol beyond the end of~$Q$. For instance, a candidate row
$m+r$ with $r\ge|Q|$ has its row bit $b_{m+r}$ outside the
queue. The antidiagonal test and the search for the next unused
row, described in Sections~\ref{app:upper} and~\ref{app:output},
can also read past the end of $Q$.

On the board itself, such a symbol is already determined. Its index
will be below $n$ (Lemma~\ref{lem:local-exactness}), and the prefix
$\sigma_1\dots\sigma_{n-1}$ is fixed by the queens already placed.
The difficulty is that the local state does not contain those queens.
Nor can the state simply keep every symbol from index $m$ onward,
because the two ends of the word that it uses move apart: the gap
$n-m$ is $16$ before column $41$, and $381$ before column $1000$.
A record of the whole stretch from index $m$ to index $n-1$ would
grow without bound.

Instead of storing more symbols, we record which symbols may follow
which. When the calculation needs the symbol after the stored words,
it looks at their last twelve symbols and tries every symbol that
the history graph (defined below) allows after them. Write $\last(W)$ for the
last twelve symbols of a word $W$ of length at least twelve.

\begin{definition}[History graph]\label{def:history-graph}
The \emph{history graph} is a finite directed graph whose vertices
are twelve-symbol words $W$ over $\{0,1,2,3\}$. Each edge is
labeled by a symbol $s\in\{0,1,2,3\}$ and has the form
\[
 W\xrightarrow{s}\last(Ws).
\]
It is constructed in Section~\ref{sec:finite-statement} 
and stored in \texttt{history.json}.
\end{definition}

An edge $W\xrightarrow{s}W'$ says that the symbol $s$ may follow
the twelve symbols of $W$. Its destination $W'$ is the next
twelve-symbol window: append $s$ and drop the oldest symbol.
We find this graph by running the calculation of this section
itself (Section~\ref{sec:finite-statement}). The proof does not
depend on how the graph was found: once built, it is held fixed,
and Section~\ref{sec:closure-check} checks the properties we need
directly on it. 

The calculation uses the history graph in two places: to supply symbols
of the earlier word that it needs but has not stored, and to check
the symbol that it produces for column~$n$.

\medskip\noindent
\textbf{Reading symbols from the graph.}
Suppose the calculation needs the symbol immediately after the end
of $Q$; we call this a \emph{request}. Since $Q$ continues directly
after $H_{\mathrm{in}}$, the twelve symbols before the missing one
are $\last(H_{\mathrm{in}}Q)$, ending at index $m+|Q|-1$. The
possible next symbols are the labels of the edges leaving this
vertex. If there is one such edge, its label is appended to $Q$. If
there are several, the calculation makes a separate copy of its
records for each edge, which we call a \emph{branch}, and continues
each branch independently. If there are none, the current branch
stops without producing a successor. A later request in the same
branch reads from the vertex that the last edge led to, so the
symbols appended to $Q$ in one branch are the labels along a walk in
the history graph. This walk starts at a vertex: before column~$30$,
$\last(H_{\mathrm{in}}Q)$ is the vertex $\sigma_{18}\dots\sigma_{29}$
(Section~\ref{app:seed}), each request
moves along an edge, and the update at the end of the calculation,
which moves symbols from the front of $Q$ to the end of
$H_{\mathrm{in}}$, leaves $\last(H_{\mathrm{in}}Q)$ unchanged.

For example, before column $53$ we have $m=32$ and
$Q=\sigma_{32}=\mathtt{2}$, and the calculation needs $\sigma_{33}$
(Section~\ref{sec:worked-transition}). The relevant vertex is
$\last(H_{\mathrm{in}}Q)=\sigma_{21}\dots\sigma_{32}$, which has two
outgoing edges:
\[
 \begin{tikzpicture}[baseline=(source.base),>=latex]
  \node[anchor=east,inner sep=2pt] (source) at (0,0)
    {\texttt{3\underline{22301212122}}};
  \node[anchor=west,inner sep=2pt] (zero) at (1.9,.42)
    {\texttt{\underline{22301212122}0}};
  \node[anchor=west,inner sep=2pt] (three) at (1.9,-.42)
    {\texttt{\underline{22301212122}3}};
  \draw[->,semithick] (source.east) --
    node[midway,above=4pt,inner sep=1pt,font=\small] {\texttt{0}} (zero.west);
  \draw[->,semithick] (source.east) --
    node[midway,below=4pt,inner sep=1pt,font=\small] {\texttt{3}} (three.west);
 \end{tikzpicture}
\]
The calculation therefore continues with two queues, $\mathtt{20}$
and $\mathtt{23}$. On the board $\sigma_{33}=\mathtt{0}$, so the first queue
is the actual one; the second is an extra possibility allowed by
the graph.

\medskip\noindent
\textbf{Checking the new symbol.}
The graph is used a second time, at the other end of the word.
We say that a word $\sigma_1\dots\sigma_T$, with $T\ge12$,
\emph{follows the history graph} if every window
$\sigma_{t-11}\dots\sigma_t$ with $12\le t\le T$ is a vertex, and
for $12\le t<T$ there is an edge
\[
 \sigma_{t-11}\dots\sigma_t\xrightarrow{\;\sigma_{t+1}\;}
 \sigma_{t-10}\dots\sigma_{t+1}.
\]
Once the calculation has chosen the queen in column $n$, it produces
the new symbol $\sigma_n$ (Section~\ref{app:output}). If
$\sigma_1\dots\sigma_{n-1}$ follows the graph, its last window is
$H_{\mathrm{out}}=\sigma_{n-12}\dots\sigma_{n-1}$, so
$\sigma_1\dots\sigma_n$ follows the graph exactly when
$H_{\mathrm{out}}$ has an outgoing edge labeled $\sigma_n$. The
\emph{output check} confirms that this edge exists; the calculation
then replaces $H_{\mathrm{out}}$ by its destination. It is the second
stage of the step in Section~\ref{app:output}. This is why the
state stores $H_{\mathrm{out}}$: on the board, it is the vertex at
which the walk of the queen word currently ends.

\medskip\noindent
\textbf{The branch that describes the board.}
The two uses fit together. On the board, every requested symbol has
index below $n$ (Lemma~\ref{lem:local-exactness}), so it belongs to
$\sigma_1\dots\sigma_{n-1}$. Section~\ref{sec:identification} proves
by induction that this prefix follows the graph, so each requested
symbol labels an edge leaving the vertex from which it is read. The
branch that answers each request with the actual symbol of the queen
word therefore never stops
for lack of an edge, and Section~\ref{sec:sufficiency} shows that it
produces the actual next queen and records. Its output check extends
the walk of the queen word by one symbol, so the requests made in
the next column are covered in turn.

\medskip\noindent
\textbf{The other branches.}
The graph allows more than the queen word does: it has vertices that
never occur as subwords of~$\sigma$, and edges that $\sigma$ never
uses. The calculation follows every edge and cannot tell which
branch is the actual one. In particular, it never compares the
symbols it reads with those in $H_{\mathrm{out}}$, although on the
board the two can overlap: in the column-$41$ example, $\sigma_{29}$
is both the last symbol of $Q$ and the first symbol of
$H_{\mathrm{out}}$. The other branches are not discarded. Each one
that does not stop must still pass the output check and give a
successor satisfying the bounds of Section~\ref{sec:sufficiency};
Section~\ref{sec:finite} verifies this for every branch. With the
graph we use, no branch ever stops for lack of an edge
(Section~\ref{sec:closure-check}).

\subsection{Recovering the upper attacks and counts}\label{app:upper}

The sets $R,D,A$ decide which lower candidates are attacked by lower
queens. An upper queen in row $m+r$ appears as the bit $b_{m+r}=1$ in
the queue, requested from the history graph if necessary. Two
questions about upper queens remain:
\begin{enumerate}
\item Does an upper queen attack a candidate $(n,m+r)$ along its
antidiagonal?
\item Does row $n$ contain an upper queen? This is the bit $b_n$
needed for the new symbol $\sigma_n=2u_n+b_n$.
\end{enumerate}
Propositions~\ref{prop:upper-antidiagonal-test}
and~\ref{prop:upper-row-count} answer both questions using only the
stored words and $z$. Their derivations refer to the actual board.
Both answers take into account only the upper queens in columns
whose symbols are stored in $H_{\mathrm{in}}$ and $Q$;
Section~\ref{sec:sufficiency} shows that, after the requests made in
Section~\ref{app:output}, no other upper queen can affect either
answer.

\medskip\noindent
\textbf{The columns represented by the stored symbols.}
Write a column as $m+h$, so that $h$ is its offset from $m$. The
symbols of $H_{\mathrm{in}}$ belong to columns $m-12,\dots,m-1$, and
those of $Q$ to columns $m,\dots,m+|Q|-1$. We call
\[
 h\in[-12\dts|Q|-1]
\]
the \emph{retained offsets}: for each of them the symbol of column
$m+h$ is stored, so the calculation can read $u_{m+h}$. Each request
adds the next nonnegative offset.

\medskip\noindent
\textbf{Relative upper-queen positions.}
By Lemma~\ref{lem:upper}, the upper queen in column $c$ has row
$c+U(c)$. The state stores neither the column nor the growing count
$U(c)$, so we measure both from a common reference. Write
$\kappa=U(m-1)$ for the number of upper queens in columns less than $m$.
Like $n$ and $m$, it is used only to interpret the records; the
calculation never computes it. For a retained offset $h$, the
difference $\Gamma(h)=U(m+h)-\kappa$ is a count of stored
column bits:
\begin{equation}\label{eq:C}
 \Gamma(h)=
 \begin{cases}
  \displaystyle\sum_{i=0}^{h}u_{m+i},&h\ge0,\\[4pt]
  \displaystyle-\sum_{i=h+1}^{-1}u_{m+i},&h<0.
 \end{cases}
\end{equation}
For $h\ge0$ it counts the upper columns $m,\dots,m+h$; for $h<0$ it
subtracts the upper columns $m+h+1,\dots,m-1$, so that
$\Gamma(-1)=0$. Substituting $U(m+h)=\kappa+\Gamma(h)$ into
Lemma~\ref{lem:upper}, the upper queen in column $m+h$, when
$u_{m+h}=1$, has
\begin{equation}\label{eq:source-offsets}
 \begin{array}{ll}
 \text{row}&m+\kappa+h+\Gamma(h),\\
 \text{antidiagonal index}&2m+\kappa+2h+\Gamma(h).
 \end{array}
\end{equation}
The terms $h+\Gamma(h)$ and $2h+\Gamma(h)$ can be computed from the
stored symbols; the references $m+\kappa$ and $2m+\kappa$ will cancel
from both tests. For example, before column $41$ we have $m=25$ and
$\kappa=16$. Among columns $25,26,27$, only $27$ is upper, so
$\Gamma(2)=1$, and the queen in column $27$ has row
$25+16+2+1=44$.

\medskip\noindent
\textbf{Locating the current column.}
Both tests compare an upper queen with a position involving the
current column $n$. This is why the state stores $z=n-m-\kappa$
(Definition~\ref{def:local-state}): in the same coordinates,
\begin{equation}\label{eq:relative}
 n=m+\kappa+z.
\end{equation}
In the column-$41$ example, $z=41-25-16=0$.

\begin{proposition}[Upper-antidiagonal attack criterion]\label{prop:upper-antidiagonal-test}
Fix a lower candidate $(n,m+r)$, where $0\le r\le w$.
Let $m+h$ be an upper column represented in $H_{\mathrm{in}}$
or $Q$, so $h\in[-12\dts |Q|-1]$ and $u_{m+h}=1$.
The upper queen in column $m+h$ attacks the candidate along
its antidiagonal if and only if
\begin{equation}\label{eq:upper-sum}
 2h+\Gamma(h)=z+r.
\end{equation}
\end{proposition}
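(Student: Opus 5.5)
This is a direct computation, so I will simply compare antidiagonal indices. The candidate $(n,m+r)$ lies on antidiagonal $n+m+r$. An upper queen attacks it along its antidiagonal exactly when the two antidiagonal indices agree. So for the upper queen in column $m+h$ (with $u_{m+h}=1$), I will compute its antidiagonal index in the relative coordinates of \eqref{eq:source-offsets} and compare.

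The first step is to justify \eqref{eq:source-offsets} for every retained offset, including negative ones. Lemma~\ref{lem:upper} says that the upper queen in column $c$ is the $U(c)$th upper queen, so its row is $c+U(c)$. I then need $U(m+h)=\kappa+\Gamma(h)$ with $\kappa=U(m-1)$.
\begin{itemize}
\item For $h\ge0$ this is immediate: $U(m+h)-U(m-1)$ counts the upper columns among $m,\dots,m+h$.
\item For $-12\le h<0$, the difference $U(m-1)-U(m+h)$ counts the upper columns among $m+h+1,\dots,m-1$. This matches the sign convention in \eqref{eq:C}.
\end{itemize}
One small point needs checking: $m+h\ge1$, so that the column exists. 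If $m+h\le0$ then $u_{m+h}$ is not defined, and the hypothesis $u_{m+h}=1$ already forces $m+h\ge1$. With this, the upper queen's row is $m+\kappa+h+\Gamma(h)$. Adding its column $m+h$ gives the antidiagonal index $2m+\kappa+2h+\Gamma(h)$.

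Next I substitute $n=m+\kappa+z$ from \eqref{eq:relative}. This turns the candidate's antidiagonal index into $n+m+r=2m+\kappa+z+r$. Equating the two indices, the common reference $2m+\kappa$ cancels, and the condition becomes exactly $2h+\Gamma(h)=z+r$. Both directions of the ``if and only if'' come from this single equality of integers.

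It remains to confirm that equality of antidiagonal indices really means an attack, and not the trivial coincidence of the same square. Any square of column $n$ differs from the upper queen's square, since the queen lies in column $m+h$. That column is earlier than $n$: it is represented in the stored prefix of the word, and Lemma~\ref{lem:local-exactness} places such indices below $n$. If needed, I would instead argue directly: $z+r\ge0$ must hold, while $m+h<n$ follows because an upper queen in column $\ge n$ has not yet been placed. The only mild obstacle is this bookkeeping for negative $h$ and for columns near the board's edge; the rest is pure algebra.
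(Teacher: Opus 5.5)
Your proof is the paper's: you substitute the row formula \eqref{eq:source-offsets}, which comes from Lemma~\ref{lem:upper} and $U(m+h)=\kappa+\Gamma(h)$, together with $n=m+\kappa+z$ into the two antidiagonal indices, and then cancel $2m+\kappa$. Your final paragraph is unnecessary and partly wrong: the two squares cannot coincide simply because an upper queen lies strictly above the main diagonal, while the candidate lies below it since its row satisfies $m+r\le n-d<n$; your aside that $z+r\ge0$ ``must hold'' is false (before column~$30$ one has $z=-1$ and $r=0$); and appealing to Lemma~\ref{lem:local-exactness} here runs backwards, since that lemma's proof uses this proposition.
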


\begin{proof}
By \eqref{eq:source-offsets}, the upper queen's antidiagonal
index is $2m+\kappa+2h+\Gamma(h)$. The candidate's is
\[
 n+m+r=2m+\kappa+z+r.
\]
The two squares lie on the same antidiagonal precisely when
these indices agree. Canceling $2m+\kappa$ gives
\eqref{eq:upper-sum}.
\end{proof}

\medskip\noindent
\textbf{Counting upper rows to determine the new bit.}
Once the queen in column $n$ has been chosen, $u_n$ is known, and it
remains to find $b_n$. Write
\[
 B(T)=\sum_{i=1}^{T}b_i
\]
for the number of upper queens in rows at most $T$. Then
$b_n=B(n)-B(n-1)$, so it suffices to compute $B$ at the two
thresholds $n-1$ and $n$.

The count $\kappa$ includes upper queens according to their
\emph{columns}: exactly those in columns less than $m$. To count them by
their \emph{rows} instead, start from $\kappa$ and make two
adjustments:
\begin{itemize}
\item subtract the upper queens in columns less than $m$ whose rows are
greater than $T$;
\item add the upper queens in columns at least $m$ whose rows are at
most $T$.
\end{itemize}
Thus
\begin{equation}\label{eq:upper-row-adjustment}
 \begin{split}
 B(T)={}&\kappa-\#\{1\le c<m:u_c=1,\ q_c>T\}\\
        &+\#\{c\ge m:u_c=1,\ q_c\le T\}.
 \end{split}
\end{equation}

\begin{figure}[!htbp]
\centering
\begin{tikzpicture}[font=\small]
 \foreach \xx/\threshold/\caseTitle in {0/0/{Subtract from $\kappa$},7.65/2.0125/{Add to $\kappa$}} {
  \begin{scope}[xshift=\xx cm,x=.82cm,y=.84cm]
   \setboardfontsize{9pt}
   \node[font=\small\bfseries] at (-.3,4.0) {\caseTitle};
   \draw[black!65,line width=.65pt] (0,-2.27)--(0,2.65);
   \draw[black!65,line width=.65pt] (-2.95,\threshold)--(2.75,\threshold);
   \node[anchor=north] at (0,-2.43) {$m$};
   \node[anchor=west] at (2.87,\threshold) {$T$};

   \foreach \c/\r in {-2.60/-2.0125,-2.20/-1.4375,-1.00/-.2875,
                       -.60/.2875,-.20/.8625,.20/1.4375,1.40/2.5875} {
    \ifdim\r pt>\threshold pt
     \ifdim\c pt<0pt \def\qcolor{black!55}\else\def\qcolor{black!30}\fi
    \else \def\qcolor{queensUpper}\fi
    \node[text=\qcolor,inner sep=0pt] at (\c,\r) {\BlackQueenOnWhite};
   }

   \draw[decorate,decoration={brace,amplitude=4pt},black!65]
    (-2.95,2.52)--(-.12,2.52);
   \node[anchor=south] at (-1.535,2.81) {$\kappa=U(m-1)$};
   \draw[decorate,decoration={brace,amplitude=4pt},black!65]
    (-3.20,-2.16)--(-3.20,{\threshold-.08});
   \node[anchor=east] at (-3.48,{(\threshold-2.24)/2}) {$B(T)$};
  \end{scope}
 }
\end{tikzpicture}
\caption{The adjustments in \eqref{eq:upper-row-adjustment}. Left:
upper queens left of $m$ and above $T$ are subtracted from $\kappa$.
Right: those right of $m$ and at or below $T$ are added. Upper-queen
rows increase with their columns (Lemma~\ref{lem:upper}), so at most
one adjustment is nonzero. Teal queens are counted by $B(T)$.}
\label{fig:row-count-adjustment}
\end{figure}

For instance, on the column-$41$ board, counting upper rows through
$T=39$ means subtracting the queen at $(24,40)$ from the sixteen
counted by $\kappa$, with nothing to add, so $B(39)=15$.

For an upper queen in a retained column, the comparison with $T$ can
be made from the stored symbols. Write the threshold as
$T=m+\kappa+x$. By \eqref{eq:source-offsets}, the queen in column
$m+h$ has row $m+\kappa+h+\Gamma(h)$, so it lies at or below row $T$
exactly when $h+\Gamma(h)\le x$. Let $\J(x)$ be the net adjustment
computed from the retained columns, with additions from $Q$ and
subtractions from $H_{\mathrm{in}}$:
\begin{equation}\label{eq:J}
 \begin{split}
 \J(x)={}&\#\{0\le h<|Q|:u_{m+h}=1,\ h+\Gamma(h)\le x\}\\
 &-\#\{-12\le h<0:u_{m+h}=1,\ h+\Gamma(h)>x\}.
 \end{split}
\end{equation}

\begin{proposition}[Upper-row counting formula]\label{prop:upper-row-count}
Fix a nonnegative integer threshold $T$ and write
$T=m+\kappa+x$. Suppose that every upper queen in a column
less than $m-12$ lies at or below row $T$, and every upper
queen in a column at least $m+|Q|$ lies above row $T$. Then
\[
 B(T)=\kappa+\J(x).
\]
If these assumptions hold for both $T=n-1$ and $T=n$, then
\begin{equation}\label{eq:J-count}
 B(n-1)=\kappa+\J(z-1),\qquad B(n)=\kappa+\J(z),
\end{equation}
and hence
\[
 b_n=\J(z)-\J(z-1).
\]
\end{proposition}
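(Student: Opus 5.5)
The plan is to begin from the exact identity \eqref{eq:upper-row-adjustment}, which holds for every threshold $T$ with no assumptions. Under the two hypotheses, the only upper queens that can contribute to either correction set lie in the retained columns. We then rewrite each correction term in offset coordinates and check that the result is exactly $\J(x)$.

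\textbf{Subtracted term.} It counts upper columns $c<m$ with $q_c>T$. By hypothesis, every upper queen in a column $c<m-12$ has $q_c\le T$, so only columns $c=m+h$ with $-12\le h<0$ contribute. Columns $c\le 0$ do not arise, since the set only contains $c\ge1$; if $m-12<1$, the corresponding symbols of $H_{\mathrm{in}}$ must be read as padding with $u=0$, and I would note this convention explicitly. For such $h$ with $u_{m+h}=1$, Lemma~\ref{lem:upper} together with \eqref{eq:C} gives the row formula \eqref{eq:source-offsets}: $q_{m+h}=m+h+U(m+h)=m+\kappa+h+\Gamma(h)$. This requires checking that $U(m+h)=\kappa+\Gamma(h)$ also holds for negative $h$. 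Indeed, $U(m+h)=U(m-1)-\sum_{i=h+1}^{-1}u_{m+i}$, which matches the case $h<0$ of \eqref{eq:C}. Hence $q_{m+h}>T$ if and only if $h+\Gamma(h)>x$, and this reproduces the negative part of $\J(x)$.

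\textbf{Added term.} It counts upper columns $c\ge m$ with $q_c\le T$. By the second hypothesis, columns $c\ge m+|Q|$ contribute nothing. The remaining columns are $c=m+h$ with $0\le h<|Q|$, and by the same row formula, $q_c\le T$ if and only if $h+\Gamma(h)\le x$. This gives the positive part of $\J(x)$, so $B(T)=\kappa+\J(x)$.

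\textbf{Consequences for $B(n-1)$, $B(n)$ and $b_n$.} By \eqref{eq:relative}, $n=m+\kappa+z$. So $T=n$ corresponds to $x=z$ and $T=n-1$ corresponds to $x=z-1$, which gives \eqref{eq:J-count}. Subtracting the two, $b_n=B(n)-B(n-1)=\J(z)-\J(z-1)$.

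\textbf{Main obstacle.} The argument is bookkeeping and has no real obstacle. The care needed is confined to two points: first, the sign convention for $\Gamma(h)$ when $h<0$, where $\Gamma(-1)=0$ must match $U(m-1)=\kappa$; second, making sure that the index range of $\J$ exactly matches the retained columns $m-12,\dots,m+|Q|-1$ and leaves no gap between the two hypotheses.
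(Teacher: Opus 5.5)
Your proposal is correct and follows the paper's proof. You start from \eqref{eq:upper-row-adjustment}, use the two hypotheses to confine both adjustments to the retained columns $m-12,\dots,m+|Q|-1$, identify the adjustments with the two terms of $\J(x)$ via \eqref{eq:source-offsets}, and substitute $n=m+\kappa+z$. Your check that $U(m+h)=\kappa+\Gamma(h)$ for negative $h$ is what underlies \eqref{eq:source-offsets}; the padding convention for $m-12<1$ is harmless but never needed, since every state the paper uses has $m\ge19$, so $H_{\mathrm{in}}$ has positive indices.
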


\begin{proof}
In \eqref{eq:upper-row-adjustment}, the first assumption means
that no upper queen before column $m-12$ needs to be subtracted.
Every required subtraction is therefore represented in
$H_{\mathrm{in}}$. The second assumption means that no upper
queen in column $m+|Q|$ or later needs to be added, so every
required addition is represented in $Q$. The two adjustments
are exactly the two terms defining $\J(x)$ in \eqref{eq:J}.
This proves $B(T)=\kappa+\J(x)$.

Since $n=m+\kappa+z$, the thresholds $T=n-1,n$ correspond
to $x=z-1,z$, respectively. Substituting these values gives
\eqref{eq:J-count}. Taking the difference cancels $\kappa$
and gives the stated formula for $b_n$.
\end{proof}

The calculation therefore finds $b_n$ from the two adjustments
$\J(z-1)$ and $\J(z)$, without knowing $\kappa$. Neither $\Gamma$ nor
$\J$ is stored; both are computed from the stored words when needed.

\subsection{The calculation}\label{app:output}

We now give the whole calculation on a state before column $n$. It
has four stages: choose the queen, produce and check the new symbol
$\sigma_n$, update the row and diagonal references, and update the
input words. Whenever a stage needs a symbol beyond the end of $Q$,
the calculation makes a request, answered from the history graph as
in Section~\ref{app:graph}. This may split the calculation into
branches, and the remaining stages are carried out separately in
each branch. Symbols appended to $Q$ stay there for the rest of the
branch, even if the candidate that needed them is rejected.
Throughout, $u_{m+h}$ and $b_{m+h}$ denote the two bits of the stored
symbol at offset $h$; the calculation never uses the values of $n$,
$m$, or $d$.

Before choosing the queen, extend $Q$ by requests, if necessary, to
length at least $z$. This supplies the symbols needed later for
$\J(z-1)$ and $\J(z)$.

\medskip\noindent
\textbf{Choose the queen.}
The lower candidates are $(n,m+r)$ for $r=0,\dots,w$; there are none
when $w<0$. Test them in increasing order of $r$, so that the first
one accepted is the lowest. First test attacks from lower queens:
reject the candidate if $r\in R$, $w-r\in D$, or $r\in A$. 
A~candidate that passes these three tests is then tested for upper
attacks along its row and antidiagonal. The row test reads
$b_{m+r}$, the row bit at offset $r$. By
Proposition~\ref{prop:upper-antidiagonal-test}, the upper queen in
column $m+h$ attacks along the antidiagonal only if
$2h+\Gamma(h)=z+r$. For $h\ge0$ we have $\Gamma(h)\ge0$, so such an
offset satisfies $h\le\lfloor(z+r)/2\rfloor$; the negative retained
offsets are already stored in $H_{\mathrm{in}}$. Both tests are
therefore covered by extending $Q$, if necessary, to length at least
\[
 1+\max\{r,\lfloor(z+r)/2\rfloor\}.
\]
Reject the candidate if $b_{m+r}=1$, or if some retained offset $h$
with $u_{m+h}=1$ satisfies \eqref{eq:upper-sum}. Choose the first
candidate that passes all five tests. If there are no candidates, or
all are rejected, choose an upper queen. Knuth's \texttt{infty-queens} program \cite{KnuthQueens} also tries
the lower candidates first and places an upper queen only when all of
them are attacked; here the tests use only the local records.

\medskip\noindent
\textbf{Produce and check the new symbol.}
The choice gives $u_n=1$ if the queen is upper and $u_n=0$ if it is
lower. Following Proposition~\ref{prop:upper-row-count}, the
calculation computes
\begin{equation}\label{eq:output}
 b_{\mathrm{out}}=\J(z)-\J(z-1),\qquad
 s=2u_n+b_{\mathrm{out}}.
\end{equation}
It then performs the output check: it confirms that $s$ labels an
edge leaving $H_{\mathrm{out}}$, and
replaces $H_{\mathrm{out}}$ by $\last(H_{\mathrm{out}}s)$. In the
exhaustive check of Section~\ref{sec:closure-check}, a missing output edge in any
branch makes the whole verification fail.

\medskip\noindent
\textbf{Update the row and diagonal references.}
Placing the queen may use up the least unused row $m$ or the least
unused magnitude $d$, so the next state measures its records from
new reference values. First record the queen if it is lower, by
inserting $r$, $w-r$, and $r$ into $R$, $D$, and $A$; an upper queen
adds nothing, since these sets record only lower queens. Write
$\widetilde R,\widetilde D,\widetilde A$ for the resulting sets.
Write $\mu$ and $\nu$ for the advances of the row and diagonal
references, so that the new least unused row is $m+\mu$ and the new
least unused magnitude is $d+\nu$. Measured from the current
references $m$ and $d$, row $m+h$ is used when a recorded lower queen
($h\in\widetilde R$) or an upper queen ($b_{m+h}=1$) occupies it, and
magnitude $d+h$ is used when $h\in\widetilde D$; upper queens lie on
no lower diagonal. Hence
\[
 \mu=\min\{h\ge0:h\notin\widetilde R,\ b_{m+h}=0\},\qquad
 \nu=\min(\N\setminus\widetilde D).
\]
The search for $\mu$ reads $b_{m+h}$ for $h=0,\dots,\mu$, making
requests when necessary; the zero bit at offset $\mu$ confirms that
this row is unused. When the new queen is upper,
Section~\ref{sec:sufficiency} proves that the search stops below its
row.

The new references are $n+1$, $m+\mu$, and $d+\nu$. Measuring each
recorded position from its new reference, and discarding offsets
that become negative, gives
\begin{equation}\label{eq:update}
 \begin{aligned}
 w'&=w+1-\mu-\nu,\\
 z'&=z+1-\mu-\textstyle\sum_{h=0}^{\mu-1}u_{m+h},\\
 R'&=\{a-\mu:a\in\widetilde R,\ a\ge\mu\},\\
 D'&=\{a-\nu:a\in\widetilde D,\ a\ge\nu\},\\
 A'&=\{a-(1+\mu):a\in\widetilde A,\ a\ge1+\mu\}.
 \end{aligned}
\end{equation}
The formula for $w'$ follows from $w=n-m-d$. For $z=n-m-U(m-1)$, the
column advances by one and the row reference by $\mu$, while
$U(m+\mu-1)$ exceeds $U(m-1)$ by the number of upper columns among
$m,\dots,m+\mu-1$; the row search has read all of their symbols. The
antidiagonal reference $n+m$ advances by $1+\mu$. Discarding negative
offsets loses nothing: the references $m$, $d$, and $n+m$ never
decrease, so a row, diagonal, or antidiagonal whose index has fallen
below its reference cannot contain a later candidate.

\medskip\noindent
\textbf{Update the input words.}
The input history must now end at index $m+\mu-1$, and the queue
must begin at index $m+\mu$. Remove the first $\mu$ symbols of the
extended $Q$ and append them to $H_{\mathrm{in}}$, keeping its last
twelve symbols. The symbol at offset $\mu$ stays as the first symbol
of $Q'$, so $Q'$ is nonempty. These two words, together with
$w',z',R',D',A'$ and the new output history, form the successor
state.

\medskip
Algorithm~\ref{alg:local-successors} collects the four stages. It
follows a single branch: $Q$ denotes the queue of that branch, which
grows as requests are made, and \Call{Extend}{$k$} makes requests
until $|Q|\ge k$. Each request may split the calculation, as
described in Section~\ref{app:graph}, and a request at a vertex
without outgoing edges stops the branch. In the column-$53$ example
of that section, \Call{Extend}{$2$} makes one request, for
$\sigma_{33}$, which splits the calculation into branches with queues
$\mathtt{20}$ and $\mathtt{23}$. The successors of a state are the
states returned by all of its branches that do not stop. In the
listing, $\J$ and $\Gamma$ are computed from $H_{\mathrm{in}}$ and the
current $Q$.

\begin{algorithm}[H]
\caption{One branch of the calculation}
\label{alg:local-successors}
\renewcommand{\algorithmicrequire}{\textbf{Input:}}
\renewcommand{\algorithmicensure}{\textbf{Output:}}
\begin{algorithmic}[1]
\Require The history graph and a state $(w,z,R,D,A,H_{\mathrm{in}},Q,H_{\mathrm{out}})$.
\Ensure A successor state; failure if the output check fails; nothing if the branch stops.
\State \Call{Extend}{$z$}\Comment{prepare for $\J(z-1),\J(z)$}
\State $c\gets\texttt{UPPER}$; $r\gets0$\Comment{choose the queen}
\While{$c=\texttt{UPPER}$ and $r\le w$}
  \If{$r\notin R$ and $w-r\notin D$ and $r\notin A$}\Comment{no lower-queen attack}
    \State \Call{Extend}{$1+\max\{r,\lfloor(z+r)/2\rfloor\}$}\Comment{reach the symbols the tests need}
    \If{$b_{m+r}=0$ and no retained offset $h$ with $u_{m+h}=1$ has $2h+\Gamma(h)=z+r$}
      \State $c\gets r$\Comment{no upper-queen attack}
    \EndIf
  \EndIf
  \State $r\gets r+1$
\EndWhile
\State Set $u_n\gets1$ if $c=\texttt{UPPER}$ and $u_n\gets0$ otherwise.\Comment{produce the new symbol}
\State Compute $b_{\mathrm{out}}$ and $s$ by \eqref{eq:output}.
\If{$H_{\mathrm{out}}$ has no outgoing edge labeled $s$}\Comment{output check}
  \State \Return failure
\EndIf
\State $H'_{\mathrm{out}}\gets\last(H_{\mathrm{out}}s)$.
\State Form $\widetilde R,\widetilde D,\widetilde A$, inserting $c,w-c,c$ if $c\ne\texttt{UPPER}$.\Comment{update the references}
\State $\nu\gets\min(\N\setminus\widetilde D)$; $\mu\gets0$.
\While{$\mu\in\widetilde R$ or $b_{m+\mu}=1$}\Comment{row $m+\mu$ is used}
  \State $\mu\gets\mu+1$; \Call{Extend}{$\mu+1$}.
\EndWhile
\State Compute $w',z',R',D',A'$ by \eqref{eq:update}.
\State Split $Q=PQ'$ with $|P|=\mu$, and set $H'_{\mathrm{in}}\gets\last(H_{\mathrm{in}}P)$.\Comment{update the input words}
\State \Return $(w',z',R',D',A',H'_{\mathrm{in}},Q',H'_{\mathrm{out}})$
\end{algorithmic}
\end{algorithm}

Starting from the state before column $30$
(Section~\ref{app:seed}) and applying the calculation to each
successor in turn produces a directed graph of states, which
Section~\ref{sec:finite} examines exhaustively.

\begin{definition}[State graph]\label{def:state-graph}
The \emph{state graph} has as vertices the
states reached from the state before column $30$ by repeating the
calculation. It has a directed edge $S\to S'$ when some branch of
the calculation on $S$ produces $S'$.
\end{definition}

Different branches may produce the same successor; they then give a
single edge. Section~\ref{sec:finite} verifies that every output
check passes, that every successor satisfies
Condition~\ref{cond:state}, and that only finitely many states are
reached. Note that the definition assumes none of these properties.

\subsection{Worked examples of the state transition}\label{sec:worked-transition}

We first carry out the complete calculation for column $41$,
starting from the state of Figure~\ref{fig:state-portrait}:
\[
 \begin{gathered}
 w=2,\qquad z=0,\qquad R=A=\varnothing,\qquad D=\{1,2\},\\
 H_{\mathrm{in}}=\mathtt{212223003223},\qquad Q=\mathtt{01212},\\
 H_{\mathrm{out}}=\mathtt{212203230303}.
 \end{gathered}
\]
The three words hold indices $13$--$24$, $25$--$29$, and $29$--$40$.
We use the board values $n=41$, $m=25$, $d=14$, and $\kappa=16$ only
to explain the numbers; they are not stored. The queue already
contains every symbol this step needs, so the calculation makes no
request and does not branch. Afterwards we look briefly at three
further steps: one in which the input words move, one in which an
upper attack rules out the only lower candidate, and one in which
the calculation branches.

\medskip\noindent
\textbf{Choose the queen.}
Since $w=2$, the lower candidates have offsets $r=0,1,2$, in rows
$25,26,27$. Since $R$ and $A$ are empty, no lower queen attacks a
candidate along its row or antidiagonal. The lower-diagonal test
gives
\begin{center}
\begin{tabular}{clcl}
\toprule
$r$&Candidate&$w-r$&Lower-diagonal test\\
\midrule
$0$&$(41,25)$&$2$&Reject: $2\in D$\\
$1$&$(41,26)$&$1$&Reject: $1\in D$\\
$2$&$(41,27)$&$0$&Pass: $0\notin D$\\
\bottomrule
\end{tabular}
\end{center}
Only $(41,27)$ reaches the upper tests. Its row bit $b_{27}$ comes
from $\sigma_{27}=\mathtt{2}$ and is zero. An antidiagonal attack needs
an upper column with
\[
 2h+\Gamma(h)=z+r=2.
\]
The upper columns in $H_{\mathrm{in}}$ have $h<0$ and $\Gamma(h)\le0$,
so for them $2h+\Gamma(h)\le-2$. In $Q=\mathtt{01212}$, the upper
columns are at offsets $h=2,4$, with $\Gamma(h)=1,2$, giving
$2h+\Gamma(h)=5$ and $10$. The candidate passes all five tests, and the queen is placed
at $(41,27)$. It is the $j=d+|D|=16$th lower queen, with magnitude
$14$, so \eqref{eq:local-rank} gives the discrepancy bounded by
Lemma~\ref{lem:estimates}:
\[
 d_j-j=w-r-|D|=2-2-2=-2.
\]

\medskip\noindent
\textbf{Produce and check the new symbol.}
The queen is lower, so $u_{41}=0$. Since $z=0$, the new row bit $b_{41}$
comes from $\J(-1)$ and $\J(0)$. The upper columns in $H_{\mathrm{in}}$
have relative rows $h+\Gamma(h)\le-1$, so neither $\J(-1)$ nor
$\J(0)$ subtracts them. Those in $Q$ have relative rows $2+1=3$ and
$4+2=6$, so neither adds them either. Hence
\[
 \J(-1)=\J(0)=0,\qquad
 b_{\mathrm{out}}=0-0=0,\qquad s=2\cdot0+0=0.
\]
On the board the two thresholds are rows $40$ and $41$. The upper
rows nearest them are $40$ and $44$, so $B(40)=B(41)$, which is why
$b_{41}=0$. The output check
succeeds, since the history graph has the edge
\[
 \mathtt{212203230303}\xrightarrow{\;\texttt{0}\;}
 \mathtt{122032303030},
\]
and $H_{\mathrm{out}}$ becomes its destination.

\medskip\noindent
\textbf{Update the row and diagonal references.}
The new queen has row $27$, lower-diagonal magnitude $14$, and
antidiagonal index $68$. Their offsets from the references
$25,14,66$ are $2,0,2$, so
\[
 \widetilde R=\{2\},\qquad
 \widetilde D=\{0,1,2\},\qquad
 \widetilde A=\{2\}.
\]
Row $25$ is still unused: the candidate $(41,25)$ was rejected
because of a diagonal attack, not because its row was occupied.
Hence $\mu=0$. Magnitude $14$ fills the gap below the used
magnitudes $15$ and $16$, so the least unused magnitude becomes $17$
and $\nu=3$. By \eqref{eq:update},
\[
 R'=\{2\},\qquad D'=\varnothing,\qquad A'=\{1\},\qquad
 w'=2+1-0-3=0,\qquad z'=0+1-0-0=1.
\]
All three offsets in $\widetilde D$ are discarded, because the
magnitudes $14,15,16$ now lie below the reference $17$. On the board,
$w'=42-25-17=0$ and $z'=42-25-16=1$, so column $42$ has the single
lower candidate $(42,25)$.

\medskip\noindent
\textbf{Update the input words.}
Since $\mu=0$, no symbols move, and $H_{\mathrm{in}}$ and $Q$ are
unchanged. Table~\ref{tab:local-transition} summarizes the step,
which is one edge of the state graph.
\begin{table}[!htbp]
\centering
\begin{tabular}{lll}
\toprule
Field&Before column $41$&Before column $42$\\
\midrule
$w$&$2$&$0$\\
$z$&$0$&$1$\\
$R$&$\varnothing$&$\{2\}$\\
$D$&$\{1,2\}$&$\varnothing$\\
$A$&$\varnothing$&$\{1\}$\\
$H_{\mathrm{in}}$&\texttt{212223003223}&\texttt{212223003223}\\
$Q$&\texttt{01212}&\texttt{01212}\\
$H_{\mathrm{out}}$&\texttt{212203230303}&\texttt{122032303030}\\
\bottomrule
\end{tabular}
\caption{One edge of the state graph. The input words do not move
because the least unused row is still $25$; the output history
shifts and appends $\mathtt{0}$.}\label{tab:local-transition}
\end{table}

\medskip\noindent
\textbf{When the input words move: column $42$.}
The sole lower candidate $(42,25)$ passes all five tests. After it is
placed, the search for the least unused row passes rows $25$
through $28$, occupied respectively by the new lower queen, an upper
queen, the queen placed in column $41$, and an upper queen. Row $29$
is unused, so $\mu=4$. The first four symbols $\mathtt{0121}$ of
$Q=\mathtt{01212}$, for indices $25$ through $28$, move into
$H_{\mathrm{in}}$:
\[
 \begin{aligned}
 H'_{\mathrm{in}}&=\last(H_{\mathrm{in}}\,\mathtt{0121})
                  =\mathtt{230032230121},\\
 Q'&=\mathtt{2}.
 \end{aligned}
\]
The new input history covers indices $17$ through $28$, and
$Q'=\sigma_{29}$, whose zero row bit confirms that row $29$ is
unused. Among the moved columns only $27$ is upper, so
$z'=1+1-4-1=-3$, in agreement with $43-29-U(28)=43-29-17$. No
request was needed: the step moved symbols already in $Q$.

\medskip\noindent
\textbf{When a lower candidate is attacked: column $47$.}
Before column $47$, we have $m=29$, $w=0$, $z=1$, and
$R=D=A=\varnothing$. The sole lower candidate $(47,29)$ passes the
three lower-queen tests, and $Q=\mathtt{2}$ gives $b_{29}=0$, so no
upper queen occupies its row. However, the upper queen $(29,47)$
shares its antidiagonal $76$. This queen has offset $h=0$ and
$\Gamma(0)=1$, so \eqref{eq:upper-sum} holds: $2h+\Gamma(h)=1=z+r$
with $r=0$. The candidate is rejected, and the $30$th upper queen is
placed at $(47,77)$. Thus a nonempty candidate interval can still
end with an upper queen.

\medskip\noindent
\textbf{When the calculation branches: column $53$.}
As in Section~\ref{app:graph}, before column $53$ we have $m=32$,
$Q=\mathtt{2}$, and $z=2$. The calculation first extends $Q$ to
length two, and the vertex $\mathtt{322301212122}$ has two outgoing
edges, giving two branches with queues $\mathtt{20}$ and
$\mathtt{23}$. The appended symbols are the two values allowed for
the earlier symbol $\sigma_{33}$, not choices of a queen in
column $53$.

Both branches choose the lower square $(53,32)$ and produce
$\sigma_{53}=\mathtt{0}$, but their record updates differ. With
$Q=\mathtt{20}$, the row bit at index $33$ is zero, so the next
unused row is $33$ and $\mu=1$. With $Q=\mathtt{23}$, that bit is
one, and the row search makes another request. The vertex reached
has a single outgoing edge, labeled $\mathtt{0}$, so $Q=\mathtt{230}$ and the
search stops at row $34$, giving $\mu=2$. The two branches therefore
produce different successors, although they chose the same queen and
produced the same symbol. On the board $\sigma_{33}=\mathtt{0}$, so the first
branch gives the actual next state; the verification checks the
second as well.
%
\section{Why the local records suffice}\label{sec:sufficiency}

Section~\ref{sec:local} described a calculation on states. We now
show that, when the state comes from the board, one of its branches
carries out the actual greedy step. Two things could go wrong: an
upper queen whose symbol is not stored could affect an attack test or
the row count, or the calculation could request a symbol that the
board has not yet determined. The bounds in this section rule out
both.

The argument concerns a single column $n$: it assumes that the records
before column $n$ satisfy the bounds below. Section~\ref{sec:finite}
checks that the state before column $30$ satisfies them and that every
successor does too, and Section~\ref{sec:identification} combines these
checks with the result of this section by induction on $n$.

\begin{condition}[Bounds on the stored records]\label{cond:state}
The records satisfy
\begin{equation}\label{eq:state-bounds}
 w\le4,\qquad -4\le z\le5,\qquad R,D\subseteq[1\dts4].
\end{equation}
\end{condition}

Each bound has a specific use. The bounds on $w$ and $D$ give the
diagonal discrepancy estimate, as explained after
\eqref{eq:local-rank}. The upper bound on $z$, together with the
bounds on $w$ and~$R$, keeps every request within six places of
index $m$. The lower bound on $z$, together with $U(m-1)\ge12$, then
guarantees that every requested symbol is already determined, and it
keeps upper queens before the input history from affecting the
tests.

These bounds alone do not confine the records to finitely many
states. That only finitely many states are reached is a result of
the verification in Section~\ref{sec:finite}.

\Needspace{15\baselineskip}
\begin{lemma}[The records determine the actual step]\label{lem:local-exactness}
Consider the actual board before column~$n$ and its records
(Definition~\ref{def:local-state}). Suppose that
\begin{enumerate}
\item the records satisfy Condition~\ref{cond:state};
\item $U(m-1)\ge12$ and $m+|Q|\le n$; and
\item $\sigma_1\dots\sigma_{n-1}$ follows the history graph.
\end{enumerate}
Follow the branch of the calculation that answers each request with
the symbol of the queen word at the requested index. We call it the
\emph{actual branch}. Then:
\begin{enumerate}
\renewcommand{\labelenumi}{(\roman{enumi})}
\item every requested index is at most $m+6<n$, and each answer
labels an edge leaving the vertex from which it is read, so the
branch never stops;
\item the branch places the actual queen in column $n$, produces
$\sigma_n$, and gives the actual records before column $n+1$;
\item the row reference advances by at most six places.
\end{enumerate}
\end{lemma}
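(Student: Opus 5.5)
I will follow the four stages of Algorithm~\ref{alg:local-successors} in order. For each stage I check two things: every symbol it reads lies at an index at most $m+6$, and the tests it performs agree with the greedy rule~\eqref{eq:greedy}.

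\textbf{Request bounds.} Requests come from three sources.
\begin{itemize}
\item \Call{Extend}{$z$}. Since $z\le5$, this reads indices at most $m+4$.
\item The candidate tests. These reach \Call{Extend}{$1+\max\{r,\lfloor(z+r)/2\rfloor\}$} with $r\le w\le4$ and $z\le5$, so they read indices at most $m+4$.
\item The row search for $\mu$. Here I argue that $\mu\le6$, which also gives (iii).
\end{itemize}
For the row search, recall that the rows $m+h$ used are those with $h\in\widetilde R\subseteq[0\dts4]$ or $b_{m+h}=1$. Upper rows differ by at least two (Lemma~\ref{lem:upper}), so among offsets $0,\dots,6$ the upper rows cannot fill all the gaps left by $\widetilde R$. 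Suppose for contradiction that offsets $0,\dots,6$ are all used. Offsets $5$ and $6$ lie outside $\widetilde R$, so both must be upper rows. Two adjacent upper rows contradict the spacing.

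It remains to show $m+6<n$, i.e.\ $n-m\ge7$. Using $z=n-m-U(m-1)\ge-4$ and $U(m-1)\ge12$, we get $n-m\ge8$. So every requested index is below $n$, and its symbol is already determined by the board.

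Each answer labels an edge leaving the vertex it is read from. This holds because hypothesis~(3) says the prefix $\sigma_1\dots\sigma_{n-1}$ follows the history graph. The vertex read is $\last(H_{\mathrm{in}}Q)$, a window of that prefix ending below $n-1$, and the next symbol of $\sigma$ labels an edge out of it. For this I need $m-12\ge1$, which holds because $U(m-1)\ge12$ forces $m-1\ge12$. This proves~(i).

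\textbf{Correctness of the tests.} Lower attacks are exactly captured by $R,D,A$, by the table in Section~\ref{sec:local-rank}. No lower square outside $[m\dts n-d]$ is free, and the lowest free square is the greedy choice, with an upper queen forced exactly when all candidates fail. An upper queen can attack a lower candidate only along its row, which is the bit $b_{m+r}$, or along its antidiagonal. For the antidiagonal case, Proposition~\ref{prop:upper-antidiagonal-test} handles retained columns. It remains to exclude upper queens in columns $c<m-12$ or $c\ge m+|Q|$.
\begin{itemize}
\item A queen in column $c\ge m+|Q|$ beyond $\lfloor(z+r)/2\rfloor$ has antidiagonal index exceeding $n+m+r$, because $2h+\Gamma(h)$ is increasing in $h$.
\item For a queen in column $c<m-12$, I use $\Gamma(h)\le0$ and the monotonicity of $2h+\Gamma(h)$ for negative $h$. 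The index satisfies $2h+\Gamma(h)\le-24+\Gamma(-12)$ at $h=-12$, and it is strictly less for earlier columns. This is far below $z+r\ge-4$, so no attack occurs.
\end{itemize}

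Similarly, the hypotheses of Proposition~\ref{prop:upper-row-count} hold for $T=n-1,n$. An upper queen in column $m+h$ has row offset $h+\Gamma(h)$ relative to $m+\kappa$, and $T$ has offset $z-1$ or $z$.
\begin{itemize}
\item For $h<-12$ the row offset is at most $-13$, which is below $z-1\ge-5$.
\item For $h\ge|Q|\ge z$ (guaranteed by \Call{Extend}{$z$}), we have $h+\Gamma(h)\ge z+1>z$, because $\Gamma(h)\ge1$ whenever column $m+h$ is upper.
\end{itemize}
So $b_{\mathrm{out}}=b_n$ and $s=\sigma_n$. The output check passes because the full word follows the graph up to $n$; I expect this to need a short argument using hypothesis~(3) together with the identification of $\sigma_n$. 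The update formulas~\eqref{eq:update} were derived from the board meanings, so the new records are actual; this gives~(ii).

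\textbf{Main obstacle.} The hardest step will be the exclusion of non-retained upper queens. It requires the monotonicity of $h\mapsto 2h+\Gamma(h)$ and $h\mapsto h+\Gamma(h)$ to be carried carefully across the boundary $h=-1,0$. It also requires care with the case where the row search extends $Q$ after the queen is chosen. I would write out those inequalities explicitly first.
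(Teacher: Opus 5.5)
Your treatment of (i), (iii), the queen choice, $b_{\mathrm{out}}=b_n$ and the record updates follows the paper's proof and is sound. This covers the request bounds, $\mu\le6$ from offsets $5$ and $6$, $n-m\ge8$, and the exclusion of non-retained upper queens via $h\le-13$, $\Gamma(h)\le0$ on one side and $h\ge|Q|$, $\Gamma(h)\ge1$ on the other. The step that fails is your claim that the output check passes because the word follows the graph up to $n$, to be settled by a short argument from hypothesis~(3) and $s=\sigma_n$. Hypothesis~(3) covers only $\sigma_1\dots\sigma_{n-1}$. The existence of an edge $H_{\mathrm{out}}\xrightarrow{\sigma_n}$ is exactly the statement that $\sigma_1\dots\sigma_n$ follows the graph. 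That is a property of the particular finite graph, not a consequence of (1)--(3).

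For a concrete check, take $n=30$ and delete the edge carrying $\sigma_{30}$, as \texttt{test\_rejection.py} does. Hypotheses (1)--(3) still hold: the state satisfies Condition~\ref{cond:state}, $U(18)=12$, $m+|Q|=30$, and $\sigma_1\dots\sigma_{29}$ still follows the graph. The branch makes no request and computes $s=\sigma_{30}=\mathtt{1}$, yet the output check fails.

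The fix is to drop that claim. As the paper remarks right after the lemma, the lemma does not assert that the output check passes. Read (ii) as saying that the branch computes $s=\sigma_n$ and that the fields it produces, including $\last(H_{\mathrm{out}}\sigma_n)$, are the actual records. The existence of the output edge comes separately, in the induction of Section~\ref{sec:identification}. There $S_n$ is a reached state, so Proposition~\ref{lem:computation} guarantees that every branch from it, the actual one included, passes the output check. This is what supplies hypothesis~(3) for column $n+1$.
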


\begin{proof}
As before, write $\kappa=U(m-1)$. Since $z=n-m-\kappa$ on the board,
the hypotheses give
\begin{equation}\label{eq:causality}
 n-m=\kappa+z\ge12-4=8.
\end{equation}

We first show that every requested index is less than $n$. The preliminary
request, to length $z\le5$, reaches at most offset
$4$. A candidate request reaches offset
$\max\{r,\lfloor(z+r)/2\rfloor\}\le4$, since $r\le w\le4$ and
$z+r\le9$. Only the row search goes further. After a lower choice is
inserted, $\widetilde R\subseteq[0\dts4]$, since $R\subseteq[1\dts4]$
and $r\le w\le4$. Of the rows at offsets $5$ and $6$, at least one
has row bit zero, because successive upper rows differ by at least
two (Lemma~\ref{lem:upper}). Hence an unused row occurs by
offset~$6$, and $\mu\le6$. This proves (iii).

Every request is therefore for an index at most $m+6<n$, and the
symbols already in $Q$ have indices less than $n$ by hypothesis~(2). So every bit that the actual branch reads is
already determined by the queens placed before column $n$. In
particular, if the queen placed in column $n$ is upper, its row is
greater than $n$, so it cannot affect a row search that ends by
$m+6$.

Next we show that the branch never stops. Every requested symbol lies in $\sigma_1\dots\sigma_{n-1}$, which
follows the history graph by hypothesis~(3), and the vertex from which
it is read consists of the twelve actual symbols before it. So the
symbol labels an edge leaving that vertex, and the branch never stops.
With the bound on the requests, this proves (i).

It remains to prove (ii). We begin by showing that upper queens outside the
retained columns do not affect the tests. On the board, extend $\Gamma(h)=U(m+h)-\kappa$ to all offsets with
$m+h\ge1$; it agrees with \eqref{eq:C} on the retained offsets. By
\eqref{eq:source-offsets}, an upper queen in column $m+h$ has
relative row $h+\Gamma(h)$ and relative antidiagonal index
$2h+\Gamma(h)$. The tests compare relative rows with the thresholds
$z-1$ and~$z$, both at least $-5$, and relative antidiagonal indices
with $z+r\ge-4$.

An upper queen before the input history has $h\le-13$ and
$\Gamma(h)\le0$, so its relative row is at most $-13$ and its
relative antidiagonal index is at most $-26$. It lies below both
thresholds, so $B(T)$ should count it; $\kappa$ already does, and no
subtraction is needed. Its antidiagonal index is too small to equal
$z+r\ge-4$, so it attacks no candidate.

An upper queen after the stored queue has $h\ge|Q|$ and
$\Gamma(h)\ge1$, since $\Gamma(h)$ counts column $m+h$ itself. When
$\J(z-1)$ and $\J(z)$ are computed, $|Q|\ge z$, so such a queen has
relative row at least $|Q|+1>z$ and is added to neither count. When
candidate $r$ is tested,
$|Q|>\lfloor(z+r)/2\rfloor$, so such a queen has relative
antidiagonal index at least $2|Q|+1>z+r$ and cannot attack the
candidate.

The sets $R,D,A$ detect every attack by a lower queen; the offsets
they discard are irrelevant, as shown in Section~\ref{app:output}. The
row bit $b_{m+r}$, read from $Q$, detects an upper queen in the
candidate's row, and by the previous paragraphs
Proposition~\ref{prop:upper-antidiagonal-test} detects every upper
queen on its antidiagonal. Conversely, all these attacks come from
queens already placed: the retained columns are less than $n$, and an
upper queen in row $m+r<n$ lies in a column less than $m+r$. The
branch therefore chooses the first unattacked lower candidate, as the
greedy rule does. If there is none, the actual queen is upper, and so
is the branch's choice. The two previous paragraphs verify both
hypotheses of Proposition~\ref{prop:upper-row-count} at the thresholds
$n-1$ and $n$, so it gives the actual row bit $b_n$, and hence the
actual symbol $\sigma_n$ in \eqref{eq:output}.

With the actual symbols, the searches find the actual advances $\mu$
and~$\nu$, \eqref{eq:update} gives the actual new sets and the actual
new values of $w$ and $z$, and the word updates keep exactly the
segments of $\sigma$ at their new positions, all with indices less
than $n+1$. With the previous paragraph, this proves (ii).
\end{proof}

Lemma~\ref{lem:local-exactness} does not assert that the output check
passes, or that the successor satisfies Condition~\ref{cond:state}. The exhaustive check
of Section~\ref{sec:closure-check} establishes both.
%
\section{The finite verification and its consequences}\label{sec:finite}

We now join the two parts of the proof. By
Lemma~\ref{lem:local-exactness}, the actual branch carries out the
actual next step. The computer checks that
every branch passes the output check and gives a successor satisfying
Condition~\ref{cond:state}. Induction then repeats these two facts for
every column.

\subsection{The starting board}\label{app:seed}

Generate columns $0$ through $29$ directly from the greedy rule.
Their rows $q_0,\dots,q_{29}$ are
\[
\begin{array}{rrrrrrrrrr}
0&2&4&1&3&8&10&12&14&5\\
7&18&6&21&9&24&26&28&30&11\\
13&34&36&38&40&15&17&44&16&47\mathrlap{.}
\end{array}
\]
They give the following state before column $30$:
\begin{center}
\begin{tabular}{ll}
\toprule
Stored field&Value\\
\midrule
$w$&$0$\\
$z$&$-1$\\
$R$&$\varnothing$\\
$D$&$\{1\}$\\
$A$&$\varnothing$\\
$H_{\mathrm{in}}$&\texttt{230121212223}\\
$Q$&\texttt{00322301212}\\
$H_{\mathrm{out}}$&\texttt{300322301212}\\
\bottomrule
\end{tabular}
\end{center}
Here $m=19$, $d=11$, and $\kappa=U(18)=12$, so
$w=30-19-11=0$ and $z=30-19-12=-1$. The input history covers indices $7$--$18$, the
queue $19$--$29$, and the output history $18$--$29$. The verifiers
compute these data from the board rather than accepting the table.
They also check that every lower queen in this prefix satisfies
$|d_j-j|\le4$, and that $\sigma_1\dots\sigma_{29}$ follows the
history graph.

Column $30$ is chosen so that the input history has positive
indices and $\kappa\ge12$, as Lemma~\ref{lem:local-exactness}
requires. On the board $m$ never decreases, so $U(m-1)\ge12$
remains true in every later column. No periodicity of the board is
assumed: Section~\ref{sec:identification} follows the actual queens.

\newpage
\subsection{Constructing the history graph}\label{sec:finite-statement}

Algorithm~\ref{alg:construction} builds the history graph by running
the calculation of Section~\ref{app:output}, with one difference: a computed symbol that fails the
output check adds its edge and destination vertex to the graph
instead. The algorithm starts from the eighteen windows of
$\sigma_1\dots\sigma_{29}$, those ending at indices $12,\dots,29$, and
their seventeen edges. A successor violating
Condition~\ref{cond:state} ends the construction with failure.

\begin{algorithm}[H]
\caption{Constructing the history graph}
\label{alg:construction}
\renewcommand{\algorithmicensure}{\textbf{Output:}}
\begin{algorithmic}[1]
\Ensure The history graph, or failure.
\State $G\gets$ the windows of $\sigma_1\dots\sigma_{29}$ and the edges between consecutive windows.
\State $\mathcal S\gets\{$the state before column $30\}$.
\Repeat\Comment{one exploration}
  \State Mark every state in $\mathcal S$ as unexamined.
  \While{some state $S\in\mathcal S$ is unexamined}
    \State Mark $S$ as examined.
    \State Run Algorithm~\ref{alg:local-successors} on every branch from $S$, adding each missing output edge to $G$.
    \For{each successor $S'$ found}
      \If{$S'$ violates Condition~\ref{cond:state}}
        \State \Return failure
      \EndIf
      \State Add $S'$ to $\mathcal S$ as unexamined, unless it is already in $\mathcal S$.
    \EndFor
  \EndWhile
\Until{this exploration added no edge to $G$}
\State \Return $G$
\end{algorithmic}
\end{algorithm}

Before column $30$, the queue already holds every symbol the
calculation needs, so the first step makes no request. The queen goes
to $(30,19)$. It is lower, so $u_{30}=0$, and row $30$ contains the
upper queen $(18,30)$, so $b_{30}=1$ and $\sigma_{30}=\mathtt{1}$. The
starting graph has no edge leaving
$H_{\mathrm{out}}=\texttt{300322301212}$, so the output check fails,
and the construction adds the edge
\[
 \texttt{300322301212}\xrightarrow{\;\texttt{1}\;}
 \texttt{003223012121}.
\]

An added edge is a new answer to later requests, so it can create
new branches at states already examined. The construction therefore
repeats the exploration over every state found so far until one adds
no edge. A request with no outgoing edge ends its branch for that
exploration; a later one continues it if an edge has since been
added. In our run, thirteen explorations
add edges and the fourteenth adds none, giving $2092$ vertices and
$2603$ edges.

The result does not depend on the order of exploration. Enlarging
the graph never removes a branch, since a missing edge only ends
one. Hence every edge that the construction adds belongs to every
graph that contains the starting windows and has an edge for each
output that it permits. The final exploration shows that the
constructed graph has this property, so it is the smallest such
graph.

The graph contains many words that never occur in $\sigma$. Because
the state omits most of the board, its records and the symbols it
reads can combine in ways that the board never produces, and the
outputs of all these combinations must be included as well. The
twelve-symbol windows serve two purposes: they retain the
column bits that the attack tests need, and they restrict which
symbols can be read together. With windows of length $4$ through
$11$, the construction reaches a successor violating
Condition~\ref{cond:state} (Appendix~\ref{app:construction}).

\newpage
\subsection{Checking every branch}\label{sec:closure-check}

This subsection supplies what Lemma~\ref{lem:local-exactness} leaves open:
that the output check passes and that the successor satisfies
Condition~\ref{cond:state}. The actual branch cannot be identified in
advance, so the check covers every branch of every state reached.
We now hold the completed history graph fixed and check it with
Algorithm~\ref{alg:check}, adding no edges. The states reached and
their successors form the state graph (Definition~\ref{def:state-graph}).

Before exploring, the verifiers check the starting board as in
Section~\ref{app:seed}: $\sigma_1\dots\sigma_{29}$ follows the graph,
every lower queen before column $30$ satisfies $|d_j-j|\le4$, and the
state before column $30$ satisfies Condition~\ref{cond:state}.

\begin{algorithm}[H]
\caption{Constructing the state graph}
\label{alg:check}
\renewcommand{\algorithmicensure}{\textbf{Output:}}
\begin{algorithmic}[1]
\Ensure The state graph, or failure.
\State $\mathcal S\gets\{$the state before column $30\}$, unexamined; $\mathcal E\gets\varnothing$.
\While{some state $S\in\mathcal S$ is unexamined}
  \State Mark $S$ as examined.
  \State Run Algorithm~\ref{alg:local-successors} on every branch from $S$.\Comment{fails if an output check fails}
  \For{each successor $S'$ found}
    \If{$S'$ violates Condition~\ref{cond:state}}
      \State \Return failure
    \EndIf
    \State Add $S'$ to $\mathcal S$ as unexamined, unless it is already in $\mathcal S$.
    \State Add the edge $S\to S'$ to $\mathcal E$.
  \EndFor
\EndWhile
\State \Return $(\mathcal S,\mathcal E)$
\end{algorithmic}
\end{algorithm}

A state already examined need not be examined again, since its
branches depend only on its eight fields and the history graph. The
check does not limit the number of states; it ends only when no 
unexamined state remains. If an output check fails or a successor 
violates Condition~\ref{cond:state}, the whole check fails with an 
error; no branch is ever dropped to make it pass. With the completed
graph, no branch stops: every request finds at least one outgoing edge.

\begin{proposition}[Finite verification]\label{lem:computation}
The history graph passes the checks above. From the initial state,
the calculation reaches $7014$ states and $8327$ directed 
state-graph edges, and no check fails. In particular, the initial 
state satisfies Condition~\ref{cond:state}, and for every reached 
state, every branch of the calculation passes the output check and 
gives a successor satisfying Condition~\ref{cond:state}.
\end{proposition}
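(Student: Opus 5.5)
This is a computational statement, so I will prove it by executing a deterministic verification and auditing that the program implements the definitions exactly; no further mathematical argument is needed beyond checking that the code is faithful.

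\textbf{Step 1: check the starting data.} I recompute $q_0,\dots,q_{29}$ directly from the greedy rule \eqref{eq:greedy}. From these I form $\sigma_1\dots\sigma_{29}$ via Definition~\ref{def:queen-word}, using Lemma~\ref{lem:upper} to identify the upper rows. I then compute $m,d,\kappa$ and the eight fields of the state before column~$30$ from the board itself, rather than copying the table in Section~\ref{app:seed}. With these in hand I confirm three things: Condition~\ref{cond:state} holds, every lower queen in the prefix satisfies $|d_j-j|\le4$, and every window of $\sigma_1\dots\sigma_{29}$ is a vertex of the stored graph in \texttt{history.json}, with consecutive windows joined by the appropriate labeled edge.

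\textbf{Step 2: run the exhaustive search.} I run Algorithm~\ref{alg:check} with the history graph held fixed. The state representation must be canonical: $R,D,A$ are stored as sorted tuples and the words as strings, so that equal states are recognized as equal and the set $\mathcal S$ is well defined. Each branch of Algorithm~\ref{alg:local-successors} is enumerated by depth-first recursion over the outgoing edges at every request. I assert three things at every step: an output edge exists, the successor satisfies \eqref{eq:state-bounds}, and no request stops for lack of an edge. Termination is not assumed. The loop simply halts when no state remains unexamined, and the reported counts of $7014$ states and $8327$ edges are read off from $\mathcal S$ and $\mathcal E$.

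\textbf{Main obstacle: fidelity of the implementation.} The hard part is not the running time but making sure the code matches the calculation of Section~\ref{app:output} exactly. The delicate points are:
\begin{itemize}
\item the preliminary \Call{Extend}{$z$} when $z\le0$;
\item computing $\Gamma$ over negative offsets from $H_{\mathrm{in}}$ with the sign convention of \eqref{eq:C};
\item the inequalities inside $\J$ in \eqref{eq:J};
\item the row search for $\mu$, which may issue requests;
\item the update formulas \eqref{eq:update}, in particular discarding negative offsets.
\end{itemize}

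To guard against errors here, I would cross-validate the actual branch against a direct greedy simulation for many columns, say up to $10^6$. At each column I check that some branch of the transition from the recorded state reproduces the true next state, as Lemma~\ref{lem:local-exactness} predicts. I would also check that the worked examples of Section~\ref{sec:worked-transition} are reproduced. As an independent second safeguard, I would have two separately written verifiers, as Appendix~\ref{app:artifacts} suggests, and require that they agree on the state and edge counts.
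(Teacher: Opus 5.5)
Your proposal is correct and is essentially the paper's proof. The paper likewise performs the starting checks of Section~\ref{app:seed}, then runs Algorithm~\ref{alg:check} with the history graph held fixed until no unexamined state remains and no check fails, reads off the counts, and has an independent second verifier confirm the same states and successors. Your cross-validation against the greedy board and the worked examples corresponds to the paper's \texttt{check\_correspondence.py} and \texttt{trace.py}; these support the fidelity of the code but are not logically part of the proof.
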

\begin{proof}
Algorithm~\ref{alg:check} terminates with no unexamined state and no
failed check. Appendix~\ref{app:artifacts} gives the code, with an
independent second implementation that reaches the same states.
\end{proof}

The number $8327$ counts ordered pairs of states, not branches:
several branches can give the same successor, and the row search can
branch again after the queen has been chosen.

Proposition~\ref{lem:computation} concerns the calculation on
states. It remains to show that the actual board always
stays among these states, to which we now turn.

\newpage
\subsection{Following the actual queens forever}\label{sec:identification}

We can now prove Lemma~\ref{lem:estimates}, and with it
Theorem~\ref{thm:main}. When the $j$th
lower queen is placed in column $n$, equation~\eqref{eq:local-rank}
gives $d_j-j=w-r-|D|$, with $w$, $D$ and the offset $r$ taken from the
records before column $n$. So it suffices to show that the actual
records satisfy Condition~\ref{cond:state} before every column
$n\ge30$; the lower queens in earlier columns were checked directly
(Section~\ref{app:seed}). Lemma~\ref{lem:local-exactness} shows that,
as long as they do, the calculation carries out the actual step, but
not that the next records satisfy the condition again.
Proposition~\ref{lem:computation} supplies this for every state
reachable from the start. The proof is therefore an induction along
the actual board, which keeps the actual state among these reached
states.

The induction uses two facts. If the actual word $\sigma_1\dots\sigma_{n-1}$ follows the
history graph, then every request can be answered by the actual
symbol, since that symbol labels an edge leaving the current window;
so the actual branch is one of the branches the calculation explores.
If moreover the state is one of the states reached in
Proposition~\ref{lem:computation}, the actual branch passes the output
check and leads to another reached state, so both facts hold again
before the next column.

\begin{proof}[Proof of Lemma~\ref{lem:estimates}]
We argue by induction on the column $n\ge30$. Let $S_{30}$ be the
state before column $30$ from Section~\ref{app:seed}, and for
$n\ge30$ let $S_{n+1}$ be the successor of $S_n$ given by the actual
branch. The induction hypothesis for $n$ consists of three assertions:
\begin{enumerate}
\item $S_n$ is a reached state of the checked state graph; in
particular, it satisfies Condition~\ref{cond:state}.
\item $S_n$ describes the actual board before column $n$: its fields
$w,z,R,D,A$ are the actual records, and its words are the actual
segments of the queen word $\sigma$ at their stated positions, all
with indices below $n$. (The length of $Q$ depends on the requests
made so far.)
\item The actual word $\sigma_1\dots\sigma_{n-1}$ follows the history
graph.
\end{enumerate}
Assertions~(1) and~(2) together say that the actual records satisfy
Condition~\ref{cond:state}, which is what we need; assertions~(2)
and~(3) are the hypotheses that let Lemma~\ref{lem:local-exactness}
act on the actual board.

\emph{Base case.} For $n=30$, assertion~(1) holds because $S_{30}$ is
the initial state of the check, and assertions~(2) and~(3) are the
starting checks of Section~\ref{app:seed}.

\emph{Inductive step.} Assume the three assertions for some $n\ge30$.
We first check the hypotheses of Lemma~\ref{lem:local-exactness}.
Condition~\ref{cond:state} holds by~(1). The least unused row $m$
never decreases, and $U(18)=12$ before column $30$, so
$U(m-1)\ge12$; and $m+|Q|\le n$, because the queue occupies the indices
$m,\dots,m+|Q|-1$, which are below $n$ by~(2). The word hypothesis
is~(3). By the lemma, the actual branch never stops, places the
actual queen in column $n$, produces~$\sigma_n$, and gives as $S_{n+1}$
the actual records before column $n+1$, with every stored symbol at
an index below $n+1$; this is~(2) for $n+1$. Since $S_n$ is a reached
state, Proposition~\ref{lem:computation} applies to this branch. Its
symbol $\sigma_n$ passes the output check, that is, it is the label 
of an edge
leaving $H_{\mathrm{out}}$, so the actual word follows the graph
through $\sigma_n$; this is~(3) for $n+1$. Its successor $S_{n+1}$ is
again a reached state; this is~(1) for $n+1$. This completes the
induction.

\emph{The discrepancy.} Let the $j$th lower queen lie in column
$n\ge30$, at offset $r$. By~(2), the records of $S_n$ are the actual
records before column $n$, so \eqref{eq:local-rank} gives
\[
 d_j-j=w-r-|D|.
\]
By~(1), Condition~\ref{cond:state} gives $0\le w-r\le4$ and
$0\le|D|\le4$, so $|d_j-j|\le4$. The lower queens in columns less than
$30$ satisfy the same bound by the starting checks of
Section~\ref{app:seed}. Hence the bound holds for every lower queen.
\end{proof}

\begin{corollary}[Diagonal coverage]\label{cor:diagonal-coverage}
Every diagonal $y-x=h$, $h\in\mathbb Z$, contains exactly one
queen. Equivalently, $n\mapsto q_n-n$ is a bijection from $\N$
to $\mathbb Z$. In particular, the lower-diagonal magnitudes
$(d_j)_{j\ge1}$ form a permutation of the positive integers.
\end{corollary}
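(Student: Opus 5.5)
We must show that every diagonal $y-x=h$ contains exactly one queen. At most one queen per diagonal is automatic from the greedy rule \eqref{eq:greedy}, since $y-n\ne q_i-i$ for $i<n$. So $n\mapsto q_n-n$ is injective, and only surjectivity onto $\mathbb Z$ needs proof. The diagonal $h=0$ contains the origin queen and no other, because $q_n\ne n$ for $n\ge1$.

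For upper diagonals $h\ge1$, Lemma~\ref{lem:upper} says the $k$th upper queen lies on the $k$th upper diagonal. Upper queens exist for every $k$: otherwise $U(n)$ would be bounded, which contradicts \eqref{eq:Dbound}, now available with $C=4$ by Lemma~\ref{lem:estimates}. Alternatively, if there were only finitely many upper queens, Lemma~\ref{lem:complement} would force $v_j=j+\mathrm{const}$, and rows far above any column would go unused, contradicting Lemma~\ref{lem:surjective}. So every positive diagonal is hit exactly once.

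The main step is lower diagonals, i.e.\ showing $(d_j)_{j\ge1}$ is onto $\{1,2,\dots\}$; we already know the $d_j$ are distinct. Here I would use the local records. Fix a magnitude $e\ge1$. Before column $n$, let $d=d(n)$ be the least unused magnitude. Then $e$ is used as soon as $d(n)>e$, so it suffices to show $d(n)\to\infty$.

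Lemma~\ref{lem:estimates} gives $|d_j-j|\le4$. The $j$th lower queen has magnitude at least $j-4$. Among the $d_1,\dots,d_j$, all values are distinct and at most $j+4$, so $\{d_1,\dots,d_j\}$ is a $j$-element subset of $[1\dts j+4]$, missing at most four values. Now take any $e$ and let $j\ge e+4$. Suppose $e$ is never used. Then for every $J\ge j$, the set $\{d_1,\dots,d_J\}\subseteq[1\dts J+4]\setminus\{e\}$ misses $e$ and at most three other values. This is not yet a contradiction on its own, so I would sharpen it with the state records. By the proof of Lemma~\ref{lem:estimates}, each state satisfies $w=n-m-d\le4$, so $d\ge n-m-4$, where $m$ is the least unused row.

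By Lemma~\ref{lem:surjective}, $m\to\infty$. For $d$, I would compare $n-m$ with $n$: the count $U(m-1)$ is of order $m/\phi$, and by Condition~\ref{cond:state}, $z=n-m-U(m-1)\ge-4$. Hence $n-m\ge U(m-1)-4\to\infty$, and therefore $d(n)\ge n-m-4\ge U(m-1)-8\to\infty$, since $U$ is unbounded and $m\to\infty$. So every magnitude $e$ is eventually used.

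Combining the three cases, every $h\in\mathbb Z$ is attained exactly once, which gives the bijection $n\mapsto q_n-n$ and the permutation statement for $(d_j)$. The only delicate point is that the bounds $w\le4$ and $z\ge-4$ hold for all $n\ge30$, which is exactly assertion~(1)--(2) of the induction in Section~\ref{sec:identification}. If that route is inconvenient, the counting identity \eqref{eq:local-rank}, $j=d+|D|$ with $|D|\le4$, gives $d\ge j-4\to\infty$ directly. This is the cleanest argument and the one I would present.
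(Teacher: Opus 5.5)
Your proof is correct and, in the form you say you would present, it is the paper's argument. Injectivity comes from the greedy rule, and the upper diagonals follow from Lemma~\ref{lem:upper} together with $U(n)\to\infty$ (Proposition~\ref{prop:stability} with $C=4$). The lower diagonals follow from $d=j-|D|\ge j-4$, using \eqref{eq:local-rank} and $D\subseteq[1\dts4]$. Make explicit that $j\to\infty$ needs the infinitude of lower queens shown after \eqref{eq:counts}, which the paper cites at this point; your alternative route through $w\le4$, $z\ge-4$ and $m\to\infty$ is also valid and avoids that step.
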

\begin{proof}
Immediately before the $j$th lower queen is placed in a column
$n\ge30$, let $d$ be the least unused lower-diagonal magnitude; the
record $D$ holds the offsets from $d$ of the used magnitudes above it.
The induction gives $D\subseteq[1\dts4]$, and
\eqref{eq:local-rank} gives
\[
 d=j-|D|\ge j-4.
\]
There are infinitely many lower queens, as shown after
\eqref{eq:counts}. The least unused lower-diagonal magnitude
therefore tends to infinity, so every positive magnitude is
eventually used.

Proposition~\ref{prop:stability}, with $C=4$, gives
$U(n)\to\infty$. Lemma~\ref{lem:upper} then puts a queen on
every positive upper diagonal. The origin occupies the main
diagonal, and the nonattacking condition gives uniqueness on
each diagonal.
\end{proof}

\begin{remark}[Zero values on diagonals]
In game terms, every diagonal $y-x=h$ contains exactly one
zero of the Sprague--Grundy function $G$ of the remark after
Theorem~\ref{thm:main}. This proves the zero-value case of
\cite[Conjecture~24]{DSS}, which asks whether every such diagonal
contains every nonnegative Sprague--Grundy value exactly once.
\end{remark}

\subsection{Consequences for related OEIS sequences}\label{sec:oeis-consequences}

Several OEIS entries~\cite{OEIS} record statistics of the same board
and ask questions about them. Some follow directly from the results
above. Proposition~\ref{prop:stability} proves the upper-queen slope
limit stated in \href{https://oeis.org/A275884}{A275884}. In terms of
our counts, \href{https://oeis.org/A275890}{A275890},
\href{https://oeis.org/A275891}{A275891}, and
\href{https://oeis.org/A275892}{A275892} are $1+U(N-1)$, $L(N-1)$, and
$1+U(N-1)-L(N-1)$, so the same proposition gives $N/\phi+O(1)$,
$N/\phi^2+O(1)$, and $N/\phi^3+O(1)$. By
Corollary~\ref{cor:diagonal-coverage}, the signed diagonal sequences
\href{https://oeis.org/A065185}{A065185} and
\href{https://oeis.org/A276325}{A276325} each enumerate $\mathbb Z$
exactly once.

Other entries record the pattern of upper and lower columns: the
lengths of runs of upper or lower columns, and the gaps
between them. Since the queen word follows the history graph
(Section~\ref{sec:identification}), a pattern of column bits that no
path in the graph contains never occurs on the board. This bounds each
statistic in every column after the start, which is checked directly;
for one of them we use a refined graph with longer histories.
Actually computing the greedy sequence then finds occurrences of 
every value that the bounds allow.

\begin{corollary}[Column runs and gaps]\label{cor:oeis-runs}
The following are the exact sets of values of the indicated sequences.
Runs are maximal; the upper-column sequences include the origin, as in
the OEIS definitions.
\begin{center}
\begin{tabular}{lll}
\toprule
Sequence&Statistic&Set of values\\
\midrule
\href{https://oeis.org/A275885}{A275885}&Lower-column run lengths&$\{1,2,3\}$\\
\href{https://oeis.org/A275886}{A275886}&Upper-column run lengths&$\{1,\ldots,5\}$\\
\href{https://oeis.org/A275887}{A275887}&Run lengths of equal terms in \href{https://oeis.org/A275885}{A275885}&$\{1,\ldots,9,11\}$\\
\href{https://oeis.org/A275888}{A275888}&Gaps between upper columns&$\{1,2,3,4\}$\\
\href{https://oeis.org/A275889}{A275889}&Gaps between lower columns&$\{1,\ldots,6\}$\\
\bottomrule
\end{tabular}
\end{center}
In particular, a $4$ never occurs in \href{https://oeis.org/A275885}{A275885}, and a $10$ never occurs
in \href{https://oeis.org/A275887}{A275887}.
\end{corollary}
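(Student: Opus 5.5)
The plan is to read every statistic off the column bits $u_i$ of the queen word $\sigma$, bound each one by a finite search in the history graph, and then exhibit each allowed value in the actual greedy sequence. By the induction in Section~\ref{sec:identification}, the word $\sigma_1\sigma_2\dots$ follows the history graph. Hence every finite subword $\sigma_{t-11}\dots\sigma_{t+k}$ with $t\ge12$ is the label sequence of a walk in the graph, namely a starting vertex followed by $k$ edges.

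For each statistic I would characterize a value by a pattern in the column bits, with the origin counted as an upper column as the OEIS conventions require.
\begin{itemize}
\item A lower run of length $\ge4$ means four consecutive symbols with $u=0$.
\item An upper run of length $\ge6$ means six consecutive symbols with $u=1$.
\item A gap of $\ge5$ between upper columns means four consecutive lower columns, and a gap of $\ge7$ between lower columns means six consecutive upper columns.
\end{itemize}
So the two gap statements reduce to the two run statements. To exclude a pattern of length $k$, I would enumerate all walks of length $k$ in the history graph, starting from every vertex, and confirm that none reads that pattern in its column bits. This is a finite computation on the $2092$-vertex graph.

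Occurrences lying partly inside the first $29$ columns, or before index $12$ where windows are not defined, are handled by inspecting the explicit prefix $q_0,\dots,q_{29}$ together with the first few later columns, so nothing escapes the graph argument. The lower bounds, that each listed value actually occurs, come from generating the greedy sequence far enough to see each value at least once. Table~\ref{tab:lower-values} and the list of $q_n$ already show small instances, and the algorithm of Section~\ref{sec:generation} can find the rest.

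The main obstacle is A275887, whose values are lengths of runs of equal lower-run lengths. Excluding $10$ and everything $\ge12$ means forbidding long patterns: a block of exactly ten equal lower runs, bounded by different run lengths on both sides, spans dozens of columns. Twelve-symbol windows may admit spurious walks realizing such patterns. I would first run the walk search on the existing graph. If it finds spurious walks, I would build a refined history graph with longer windows, say $16$ or $20$, by rerunning Algorithm~\ref{alg:construction} with that window length. I would then rerun Algorithm~\ref{alg:check} to confirm closure and Condition~\ref{cond:state}, repeat the induction of Section~\ref{sec:identification} verbatim so that $\sigma$ follows the refined graph, and finally redo the walk search to exclude $10$ and all values $\ge12$.

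The value $11$ must be exhibited by direct generation. That may require a long computation, but it is routine once an occurrence is located.
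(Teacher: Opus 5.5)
Your bounds for A275885, A275886, A275888 and A275889 follow the paper's argument exactly. Every twelve consecutive symbols of $\sigma$ form a history-graph vertex, and no vertex contains four consecutive column bits $0$ or six consecutive column bits $1$. Each gap is one more than the complementary run, the origin is checked by hand, and attainment comes from generating the sequence. You do not need walk enumeration or special handling of early columns: the windows ending at indices $12,\dots,29$ are vertices too, so a single vertex already covers any such pattern in columns $\ge1$.

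For A275887 your plan has the right shape, but the decisive step is an unperformed, open-ended computation, and you search a weaker object than the paper does.

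\begin{itemize}
\item The paper also finds twelve-symbol data too coarse. It reruns the entire verification with forty-symbol histories, starting before column $80$ so that $H_{\mathrm{in}}$ has positive indices. You must make the same adjustment for any window longer than $18$, since $m=19$ before column $30$.
\item It then extracts the constraint on $r$ from the refined \emph{state} graph, not from history-graph walks. The output of a state-graph edge is the last symbol of the successor's $H_{\mathrm{out}}$. Compressing the lower outputs between consecutive upper outputs into single labeled edges gives a finite graph whose walks include $g$ after column $80$, and a further compression gives one for $r$.
\item In that graph the label-$c$ edges form an acyclic subgraph, so maximal runs are among finitely many paths. Their possible lengths are $\mathcal L_1=\{1,\ldots,9,11\}$, $\mathcal L_2=\{1,\ldots,6\}$, $\mathcal L_3=\{1\}$.
\end{itemize}

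History-graph walks are a coarser constraint. An edge $W\xrightarrow{s}W'$ says only that \emph{some} reached state with $H_{\mathrm{out}}=W$ emits $s$. It forgets the input records near $m\approx n/\phi$, which for large $n$ lie far behind the output window and are what tie each output to earlier parts of the word. Every state-graph walk projects to a history-graph walk, but a history-graph walk need not lift to a state-graph walk. So your search at length $16$ or $20$ could admit spurious runs of length $10$ or at least $12$, and you give no argument that enlarging the window eventually suffices.

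To make this a proof, run the exclusion on the state graph as above. Also check directly the runs of $r$ that begin before the refined certificate applies; deciding their maximality can require columns well past the first thirty (the paper needs columns through $95$).
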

\begin{proof}
We first bound the runs and gaps. By the induction of
Section~\ref{sec:identification}, every twelve consecutive symbols of
$\sigma$ form a vertex of the history graph, and inspection of the
graph shows that no vertex contains four consecutive symbols with
column bit $0$ or six with column bit~$1$. Since column bit $0$ marks
a lower column, there are never four consecutive lower columns or six
consecutive upper columns among columns $1,2,\dots$; a direct check
covers the origin, which the upper-column sequences count as upper.
So lower runs have length at most three and upper runs at most five.
A gap between consecutive upper columns is one more than the lower run
between them, and similarly for lower columns, so the gaps between
upper columns are at most four and those between lower columns at
most six.

The runs of equal terms in \href{https://oeis.org/A275885}{A275885}
need more. Write $r$ for \href{https://oeis.org/A275885}{A275885} and
$g$ for \href{https://oeis.org/A275888}{A275888}; the terms of $r$ are
the terms $k>1$ of $g$, each reduced by one. The twelve-symbol graph
is too coarse to exclude ten consecutive equal terms in $r$, so
Appendix~\ref{app:oeis-checks} repeats the verification with
forty-symbol histories. From the refined state graph it derives a
finite graph with edges labeled $1,2,3$ such that, after column $80$,
the terms of $r$ are the labels along a walk. For each~$c$, the edges
labeled $c$ form an acyclic subgraph, so a maximal run of $c$'s is one
of finitely many paths in it. Let $\mathcal L_c$ be the set of lengths
of those paths that can be preceded and followed by edges with other
labels; enumerating them gives
\[
 \mathcal L_1=\{1,\ldots,9,11\},\qquad
 \mathcal L_2=\{1,\ldots,6\},\qquad
 \mathcal L_3=\{1\}.
\]
Every maximal run of equal terms in $r$ after column $80$ therefore has
length in $\{1,\ldots,9,11\}$, and the earlier runs are checked
directly.

Finally, computing the greedy sequence exhibits every value in the
table (Appendix~\ref{app:oeis-checks}), so each set of values is exact.
\end{proof}

These conclusions settle the explicit bound questions in
\href{https://oeis.org/A275885}{A275885} and
\href{https://oeis.org/A275888}{A275888}, and establish the empirical
range recorded in \href{https://oeis.org/A275887}{A275887}.

The same refined graph settles a catalogue of observations about
$g$. Cut $g$ immediately after each $3$; the pieces are the
\emph{return words} to $3$. They are exactly the $156$ words listed in
the note linked from \href{https://oeis.org/A275888}{A275888}
\cite{OEISWords}: the graph admits no others, and each occurs. Their
lengths range from $2$ to $26$, and exactly five contain a $4$.
Exactly $63$ of them are \emph{faithful}, meaning that every
occurrence is followed by the same return word: for these $63$ the
graph allows only one successor, and for each of the others two
different successors occur. The factors $11111$, $2222$, $33$, $44$,
and $3213$ never occur in $g$. Consecutive $4$'s are at least $71$
terms apart, and every pair at that distance has the same fill,
listed in the note. Appendix~\ref{app:oeis-checks} describes the
checks and occurrence witnesses; the complete catalogues are
in the companion repository.

\subsection{Sharper constants}\label{sec:sharper}
Theorem~\ref{thm:main} used only the bound $|d_j-j|\le4$. The records
carry more information: $w$, $D$ and $R$ are correlated, so
combinations of them vary less than their separate bounds allow. An
exact form of the counting argument passes this on to better
constants. As in the proof of
Proposition~\ref{prop:stability}, write $\varepsilon(n)=U(n)-n/\phi$.
The following identity replaces the estimate of
Lemma~\ref{lem:counting-recursion} by an exact expression in the
records.

\begin{lemma}[Exact error recursion]\label{lem:exact-recursion}
Consider the actual board before column $n\ge30$, with least unused
row $m$ and records $w$, $R$, and $D$. Let $t$ be the number of lower
rows below $m$. Then $t<n-1$ and
\begin{equation}\label{eq:exact-recursion}
 \phi^2\varepsilon(n-1)=w-|D|-\phi|R|+1+\varepsilon(t).
\end{equation}
\end{lemma}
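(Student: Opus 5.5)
The plan is to compute $U(n-1)$ in two ways, once through the lower diagonals and once through the lower rows, and then rewrite both in terms of $\varepsilon$. Throughout, $d$ is the least unused lower-diagonal magnitude before column $n$, so that $w=n-m-d$.

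\textbf{Two counts of the lower queens.} Among columns $1,\dots,n-1$ there are $L(n-1)=n-1-U(n-1)$ lower queens.
\begin{itemize}
\item By diagonals: these queens use exactly the magnitudes $1,\dots,d-1$ together with the $|D|$ magnitudes recorded in $D$. Hence $L(n-1)=d-1+|D|$, which is the count behind \eqref{eq:local-rank}.
\item By rows: their rows are the $t$ lower rows below $m$ together with the $|R|$ rows recorded in $R$. Rows equal to $m$ are excluded since $m$ is unused. Hence $L(n-1)=t+|R|$.
\end{itemize}
Since $U(n-1)\ge1$ (the queen $q_1=2$ is upper), we get $t\le L(n-1)<n-1$, which is the inequality $t<n-1$.

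\textbf{Identifying $m$ as a sorted lower row.} The key step is to show that $m=v_{t+1}$, and hence $m=t+1+U(t)$ by Lemma~\ref{lem:complement}.
\begin{itemize}
\item By Lemma~\ref{lem:surjective}, row $m$ is eventually occupied.
\item It is not occupied by a queen in a column before $n$, since $m$ is unused before column $n$.
\item A queen in a column $c\ge n$ that is upper has row $q_c>c\ge n>m$. So row $m$ cannot be an upper row.
\item Therefore row $m$ is a lower row. All rows below $m$ are already used, and exactly $t$ of them are lower rows, so $m$ is the $(t+1)$st smallest lower row, namely $v_{t+1}$.
\end{itemize}
This step is the main subtlety: it needs the permutation property together with the fact that future upper queens lie above row $n$. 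I would check it carefully, but I expect no real obstacle.

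\textbf{The algebra.}
\begin{enumerate}
\item Substitute $d=L(n-1)+1-|D|$ and $m=t+1+U(t)$ into $n=m+d+w$. Then use $L(n-1)=n-1-U(n-1)$ to obtain
\[
 U(n-1)=t+1+U(t)-|D|+w.
\]
\item Write $U(n-1)=(n-1)/\phi+\varepsilon(n-1)$ and $U(t)=t/\phi+\varepsilon(t)$.
\item Eliminate $t$ using $t=L(n-1)-|R|=(n-1)\phi^{-2}-\varepsilon(n-1)-|R|$.
\item Use $1+\phi^{-1}=\phi$, so that $t+t/\phi=t\phi=(n-1)/\phi-\phi\,\varepsilon(n-1)-\phi|R|$.
\item The terms $(n-1)/\phi$ on the two sides cancel, leaving
\[
 (1+\phi)\,\varepsilon(n-1)=w-|D|-\phi|R|+1+\varepsilon(t).
\]
Since $1+\phi=\phi^2$, this is \eqref{eq:exact-recursion}.
\end{enumerate}
The hypothesis $n\ge30$ only ensures that the records $w,R,D$ are the actual board records, as established in Section~\ref{sec:identification}; the identity itself uses only their board meanings.
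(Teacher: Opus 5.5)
Your proof is correct and follows essentially the same route as the paper. It uses the same two counts $L(n-1)=d-1+|D|=t+|R|$, the same identification of row $m$ as the $(t+1)$st sorted lower row (so $m=t+1+U(t)$ by Lemma~\ref{lem:complement}), and the same elimination of $m$ and $t$ via $1+\phi^{-1}=\phi$ and $1+\phi=\phi^2$. Your unjustified strict inequality $n>m$ is harmless: $m\le n$ holds because the $n$ queens in columns $0,\dots,n-1$ occupy distinct rows, and $m\le n$ already gives $q_c>c\ge n\ge m$.
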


\begin{proof}
The columns $1,\dots,n-1$ contain $U(n-1)$ upper queens and
$L(n-1)=n-1-U(n-1)$ lower queens. The lower queens use every
magnitude less than $d$ and the $|D|$ magnitudes recorded in $D$, so
$L(n-1)=d-1+|D|$. Since $w=n-m-d$, it follows that
\[
 U(n-1)=m+w-|D|.
\]
Row $m$ is unused. An upper queen in row $m$ would lie in a column
less than $m$; all those columns are filled before column $n$, and
none of their queens is in row $m$. So by Lemma~\ref{lem:surjective},
row $m$ is eventually used by a lower queen: it is a lower row. It is therefore the $(t+1)$st lower row, and
Lemma~\ref{lem:complement} gives $m=t+1+U(t)$. The lower rows below
$m$ are all used, necessarily by lower queens, and the lower queens
in rows above $m$ are the $|R|$ recorded in $R$. Hence
$t=L(n-1)-|R|$; in particular $t\le L(n-1)<n-1$, since $q_1=2$ is
upper.

Now eliminate $m$ and $t$. Since $U(t)=t/\phi+\varepsilon(t)$ and
$1+1/\phi=\phi$,
\[
 U(n-1)=w-|D|+1+t+U(t)=w-|D|+1+\phi t+\varepsilon(t).
\]
Substituting $t=n-1-U(n-1)-|R|$ and using $1+\phi=\phi^2$ gives
\[
 \phi^2U(n-1)=\phi(n-1)+w-|D|-\phi|R|+1+\varepsilon(t).
\]
Since $\phi^2U(n-1)=\phi(n-1)+\phi^2\varepsilon(n-1)$, this is
\eqref{eq:exact-recursion}.
\end{proof}

The verification (Appendix~\ref{app:verification}) also checks that
every state of the state graph satisfies
\begin{equation}\label{eq:sharper-check}
 -4\le w-|D|-\phi|R|\le1,
\end{equation}
and that $w-r-|D|\le2$ at every lower choice. With these facts,
\eqref{eq:exact-recursion} gives the following bounds.

\begin{proposition}[Sharper constants]\label{prop:sharper}
For every $n\ge0$, $-3/\phi<\varepsilon(n)<2/\phi$. Consequently, for
every $n\ge1$,
\[
 \begin{aligned}
 1-\frac{4}{\phi}&<q_n-n\phi<\frac{2}{\phi}&&\text{if }q_n>n,\\[3pt]
 -2-\frac{4}{\phi}&<q_n-\frac{n}{\phi}<4+\frac{1}{\phi}&&\text{if }q_n<n.
 \end{aligned}
\]
\end{proposition}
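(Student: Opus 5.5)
We prove the bounds on $\varepsilon$ by strong induction on $n$, driving the induction with the exact recursion \eqref{eq:exact-recursion} and the verified inequality \eqref{eq:sharper-check}. The queen bounds then follow exactly as in the proof of Proposition~\ref{prop:stability}.

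\textbf{Bounding $\varepsilon$.} For small $n$ (say $n\le30$) we check $-3/\phi<\varepsilon(n)<2/\phi$ directly from the listed prefix, with $\varepsilon(0)=0$. Now take $n\ge30$ and apply Lemma~\ref{lem:exact-recursion} before column $n$. It gives
\[
 \varepsilon(n-1)=\phi^{-2}\bigl(w-|D|-\phi|R|+1\bigr)+\phi^{-2}\varepsilon(t),\qquad t<n-1.
\]
By induction, $\varepsilon(t)\in(-3/\phi,2/\phi)$. By \eqref{eq:sharper-check}, $w-|D|-\phi|R|+1\in[-3,2]$. Hence
\[
 \varepsilon(n-1)\in\bigl(\phi^{-2}(-3-3/\phi),\ \phi^{-2}(2+2/\phi)\bigr).
\]
Since $1+1/\phi=\phi$, this interval is exactly $(-3/\phi,2/\phi)$. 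The interval $(-3/\phi,2/\phi)$ is therefore a fixed point of the affine map, and the induction closes. Because the lemma is applied before every column $n\ge30$, this covers every index $n-1\ge29$, which together with the base cases gives all $n\ge0$.

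\textbf{Upper queens.} If $q_n>n$, then $q_n-n\phi=\varepsilon(n)$, as in Proposition~\ref{prop:stability}. This immediately gives $q_n-n\phi<2/\phi$. The stated lower bound $1-4/\phi\approx-1.47$ is stronger than $-3/\phi\approx-1.85$, so it needs an extra step. Since $q_n$ is an integer, one might hope that integrality sharpens $\varepsilon(n)>-3/\phi$, but $n\phi$ is irrational, so rounding alone gives nothing. I would instead use that column $n$ is upper, so $U(n)=U(n-1)+1$ and
\[
 \varepsilon(n)=\varepsilon(n-1)+1-\phi^{-1}=\varepsilon(n-1)+\phi^{-2}.
\]
This reduces the bound to $\varepsilon(n-1)>1-4/\phi-\phi^{-2}=-3/\phi$, which is exactly the bound already proved, since $1-\phi^{-2}=\phi^{-1}$.

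\textbf{Lower queens.} If column $n$ holds the $j$th lower queen, then $U(n)=U(n-1)$ and $q_n-U(n)=j-d_j$ by \eqref{eq:rank-row}. This gives
\[
 q_n-\frac n\phi=(j-d_j)+\varepsilon(n),\qquad \varepsilon(n)=\varepsilon(n-1)-\phi^{-1}.
\]
We now bound $j-d_j=|D|-(w-r)$ using the records before column $n$, where $0\le w-r$ and $|D|\le4$.

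For the upper bound, $j-d_j\le4$ and $\varepsilon(n)<2/\phi-1/\phi$ give $q_n-n/\phi<4+1/\phi$.

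For the lower bound, the verified fact $w-r-|D|\le2$ gives $j-d_j\ge-2$, and $\varepsilon(n)>-3/\phi-1/\phi$ gives $q_n-n/\phi>-2-4/\phi$.

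Both lower-queen bounds match the stated ones. The small columns $n<30$ are checked directly against the listed rows.

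\textbf{Main obstacle.} The only delicate point is making the index bookkeeping consistent. Lemma~\ref{lem:exact-recursion} controls $\varepsilon(n-1)$ with $t<n-1$, so the induction must be set up over indices $\ge29$, with the base cases and the seam at column $30$ handled explicitly. The rest is the fixed-point arithmetic above.
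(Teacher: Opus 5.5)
Your proposal is correct and is essentially the paper's proof. You run a strong induction on $\varepsilon$ using the exact recursion \eqref{eq:exact-recursion} and \eqref{eq:sharper-check}, applying the lemma before column $n$ rather than $n+1$, which only shifts the index; you then use $\varepsilon(n)=\varepsilon(n-1)+\phi^{-2}$ for upper queens, and $\varepsilon(n)=\varepsilon(n-1)-\phi^{-1}$ together with the verified bound $w-r-|D|\le2$ for lower queens. The one point you leave implicit is that the actual state before column $n$ is a reached state, so \eqref{eq:sharper-check} applies to it; the induction of Section~\ref{sec:identification} supplies this, and the paper cites it explicitly.
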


\begin{proof}
For $0\le n\le29$, the bounds on $\varepsilon(n)$ are checked
directly. Let $n\ge30$, and assume them for all smaller arguments. By
the induction of Section~\ref{sec:identification}, the state before
column $n+1$ belongs to the state graph, so its records satisfy
\eqref{eq:sharper-check}, and Lemma~\ref{lem:exact-recursion} gives
$\phi^2\varepsilon(n)=w-|D|-\phi|R|+1+\varepsilon(t)$ for some $t<n$. The induction hypothesis and \eqref{eq:sharper-check}
then give
\[
 \phi^2\varepsilon(n)<1+1+\frac{2}{\phi}=2\phi,\qquad
 \phi^2\varepsilon(n)>-4+1-\frac{3}{\phi}=-3\phi,
\]
and dividing by $\phi^2$ gives the bounds on $\varepsilon(n)$.

If $q_n>n$, then $q_n-n\phi=\varepsilon(n)$ by
Lemma~\ref{lem:upper}. Column $n$ adds one upper queen, so
$\varepsilon(n)=\varepsilon(n-1)+1-1/\phi=\varepsilon(n-1)+1/\phi^2$,
which is greater than $1/\phi^2-3/\phi=1-4/\phi$; the upper bound is
that on $\varepsilon(n)$.

If column $n$ contains the $j$th lower queen, then
$q_n-n/\phi=\varepsilon(n)+j-d_j$ by \eqref{eq:rank-row}. Here
$\varepsilon(n)=\varepsilon(n-1)-1/\phi$ lies strictly between
$-4/\phi$ and $1/\phi$. Lemma~\ref{lem:estimates} gives $d_j-j\ge-4$.
From column $30$ on, $d_j-j=w-r-|D|$ by \eqref{eq:local-rank}, which
the verification bounds by $2$, and a direct check gives the same bound
before column $30$. Hence $-4/\phi-2<q_n-n/\phi<1/\phi+4$.
\end{proof}

In the $1$-indexed coordinates $c=n+1$ and $s(c)=q_n+1$ of the remark
after Proposition~\ref{prop:stability}, the proposition gives, after
rounding, for every $c\ge1$,
\[
 s(c)\in
 \Bigl[\frac{c}{\phi}-4.091\dts\frac{c}{\phi}+5\Bigr]
 \cup[c\phi-2.091\dts c\phi+0.619].
\]
Compared with Knuth's ranges
$[c/\phi-3\dts c/\phi+5]\cup[c\phi-2\dts c\phi+1]$, our upper endpoints
are no larger, so his upper halves always hold; whether his lower
halves do remains open.
%
\section{Fast generation with little memory}\label{sec:generation}

The local description also gives a practical way to compute the
queens. Knuth's program \texttt{infty-queens} \cite{KnuthQueens} runs
in $O(N)$ time, testing at most five rows per column ($w\le4$), but
it keeps occupancy arrays that
grow linearly with the number of queens, so it needs $O(N)$ memory:
when bit-packed, they occupy about $6$~GiB for ten billion queens
(Section~\ref{sec:generation-benchmarks}). The program described here
generates $q_0,\dots,q_N$ in $O(N)$ time using only $O(\log N)$
memory: it stores neither the
board nor the growing queen word, and for ten billion queens its peak
memory is under $2$~MiB. It makes use of two ideas. First, the
calculation for column $n$ reads the queen word only far behind
column~$n$, so the symbols it needs can be regenerated on demand by a
second copy of the same calculation, which lags behind the first and
computes only when asked (Sections~\ref{sec:generator-idea} and~\ref{sec:generator-records}).
Second, along the actual process the calculation passes through only
finitely many records, so it can be compiled into a fixed table that
places several queens per lookup, with no attack tests at run time
(Section~\ref{sec:block-transducer}).
Section~\ref{sec:generator-cost} adds buffering between the copies and
proves the bounds, and Section~\ref{sec:generation-benchmarks}
compares the C implementation with bit-packed Knuth.

\subsection{Regenerating the earlier word}\label{sec:generator-idea}

By Lemma~\ref{lem:local-exactness}, the actual branch of the
calculation for column $n$ needs the queen word only at indices up
to $m+6$, where $m$ is the least unused row. On the board,
$n=m+\kappa+z$ with $\kappa=U(m-1)$ and $z$ bounded, and
Proposition~\ref{prop:stability}, with $C=4$ from
Lemma~\ref{lem:estimates}, gives $U(m-1)=m/\phi+O(1)$. Since
$1+1/\phi=\phi$, this means $n=\phi m+O(1)$: the calculation writes
$\sigma_n$ but reads the word only up to about index $n/\phi\approx
0.618\,n$.

The earlier symbols can therefore be regenerated instead of stored.
Start a copy of the calculation at the board before column $30$. 
Its records suffice to produce $\sigma_{30},\dots,\sigma_{47}$. To go
further, it needs $\sigma_{30},\sigma_{31},\dots$ in order: symbols it
has already produced but did not store, since storing the word is
exactly what we want to avoid. A second copy, started at the same
board, produces the same word from $\sigma_{30}$ onward, so it can
supply them.
When the second copy needs input in turn, a third copy supplies it,
and so on, forming a chain of copies. For example, the first copy
asks for $\sigma_{30}$ before column $48$; the second copy, having
handed on its first eighteen symbols, needs $\sigma_{30}$ in turn when
the first copy stands before column $76$, and further copies start
when the first stands before columns $119$ and $192$. Each of these
columns is about $\phi$ times the one before.
Every copy produces the same word from $\sigma_{30}$ onward, at the
pace required by the copy it supplies. The copies take turns: a copy
computes only when the copy it supplies asks it for a symbol, so the
whole chain runs as a single sequential program. Because its inputs are the
actual symbols, each copy follows the actual branch of
Lemma~\ref{lem:local-exactness}: there is nothing to choose, and the
history graph is not consulted.

When the outermost copy reaches column $N$, the second copy works
near column $N/\phi$, the third near $N/\phi^2$, and so on, until a
copy is still within its first eighteen columns. For $N=10^6$, with
symbols passed one at a time, the second, third, and fourth copies
stand before columns $618035$, $381967$, and $236072$, and the chain
has $22$ copies, the last before column $48$
(Figure~\ref{fig:recursive-generation}). The number of copies thus grows logarithmically
with $N$, while their combined work is about
\[
 N+\frac{N}{\phi}+\frac{N}{\phi^2}+\cdots=\phi^2N\approx2.618\,N
\]
queen placements; for $N=10^6$ the copies carried out $2617400$
placements in total. Section~\ref{sec:generator-cost} makes this
estimate precise.

\begin{figure}[!htbp]
\centering
\begin{tikzpicture}[x=1.35cm,y=1cm,>=stealth]
 \draw[->,black!70] (0,0) -- (10.45,0) node[right,font=\footnotesize] {column};
 \foreach \c in {10,6.18035,3.81967,2.36072,1.45904,0.90176,0.55734,
                 0.34448,0.21290,0.13159,0.08137,0.05030,0.03109,0.01923,
                 0.01190,0.00739,0.00457,0.00284,0.00177,0.00111,0.00070,0.00048}
  \draw[black!80,very thin] (\c,-.09) -- (\c,.09);
 \foreach \a/\b in {10/6.18035,6.18035/3.81967,3.81967/2.36072,
                    2.36072/1.45904,1.45904/0.90176,0.90176/0.55734,
                    0.55734/0.34448,0.34448/0.21290}
  \draw[->,queensUpper,semithick] (\a,.12) to[bend right=55] (\b,.12);
 \node[font=\footnotesize,text=queensUpper,above] at (8.09,1.42) {reads};
 \foreach \c/\k/\v in {10/1/{1\,000\,000},6.18035/2/{618\,035},
                       3.81967/3/{381\,967},2.36072/4/{236\,072}}
  \node[below=3pt,align=center,font=\footnotesize] at (\c,0)
   {copy~\k\\$\v$};
 \node[below=3pt,font=\footnotesize] at (0,0) {$0$};
\end{tikzpicture}
\caption{The copies when the first reaches column $N=10^6$, with
symbols passed one at a time. Each tick is a copy; each arc runs from
a copy to where it reads, which is where the next copy writes. The
$22$ ticks crowd to the left, the last before column $48$.}
\label{fig:recursive-generation}
\end{figure}

\subsection{The generator, one symbol at a time}\label{sec:generator-records}

Because a copy receives the actual symbols, it never has to allow for
other possibilities, and it can drop every part of the state of
Section~\ref{sec:local} that exists only for that purpose. Of the
state $(w,z,R,D,A,H_{\mathrm{in}},Q,H_{\mathrm{out}})$, a copy drops
the output history and keeps only four column bits of the input
history. This subsection derives these records,
\eqref{eq:generator-records}, and then gives the algorithm that runs
the chain. The history graph and $H_{\mathrm{out}}$ serve only to
constrain the possible inputs and to check the outputs during
verification. With the actual inputs there is nothing to constrain
and nothing to check, so a copy keeps neither.

The input history enters the calculation in two places: the counts
$\J(z-1)$ and $\J(z)$ that give the row bit $b_n$ in
\eqref{eq:output}, and the upper-antidiagonal test
\eqref{eq:upper-sum}. For the first, a copy can compute $b_n$
directly. By \eqref{eq:source-offsets} and $n=m+\kappa+z$, an upper
queen in column $m+h$ occupies row $n$ exactly when
\begin{equation}\label{eq:generator-row-bit}
 u_{m+h}=1\quad\hbox{and}\quad h+\Gamma(h)=z.
\end{equation}
Thus $b_n$ is $1$ if this equality holds for some retained upper
column, and $0$ otherwise. This replaces the two counts, and it
needs far fewer of the retained columns.

Only four column bits before $m$ are needed. Indeed, for
$h\le-5$, we have $\Gamma(h)\le0$ and hence
\[
 h+\Gamma(h)\le-5<z,\qquad
 2h+\Gamma(h)\le-10<z+r
\]
for every lower candidate $r\ge0$, using $z\ge-4$. Such a column
can satisfy neither \eqref{eq:generator-row-bit} nor the
upper-antidiagonal test \eqref{eq:upper-sum}. The row tests on the
lower candidates read their bits directly from $Q$, so the row bits
of the input history are not needed either.

The records of a copy can therefore be just
\begin{equation}\label{eq:generator-records}
 \bigl(w,z,R,D,A,(u_{m-4},u_{m-3},u_{m-2},u_{m-1}),Q\bigr).
\end{equation}
The field $z$ is updated by \eqref{eq:update}, whose sum uses the
symbols removed from $Q$; append their column bits to the four
stored bits, retaining only the last four. All other updates are
unchanged. In particular, $Q$ still stores both bits of each symbol.
Its length stays at most eleven: it starts at eleven, is extended
only to a required length of at most seven, and then loses its first
$\mu$ symbols. The starting values, before column $30$, are
\[
 \begin{gathered}
 w=0,\quad z=-1,\quad R=A=\varnothing,\quad D=\{1\},\\
 (u_{15},u_{16},u_{17},u_{18})=(1,1,1,1),\qquad
 Q=\mathtt{00322301212}.
 \end{gathered}
\]

Without branching, the order in which a step reads its symbols no
longer matters, so a copy can fetch the symbols its tests need before
testing, rather than requesting them one at a time in the middle of
the tests. To do so, first extend $Q$ to length
at least $\max(1,z,w+1)$, keeping it if it is already longer. This
covers every candidate row and every upper column needed for either
test: for $r\le w$, the request length
$\max\{r+1,\lfloor(z+r)/2\rfloor+1\}$ in Section~\ref{app:output} is at
most $\max(1,z,w+1)$. Since $w\le4$ and $z\le5$, this length is at
most five, so these preliminary requests end by $m+4$. The subsequent
row search reads further symbols as before, at most through $m+6$.
Reading a few symbols earlier does not change the queen choice or the
row and diagonal records. A step thus makes its requests at only two
points: before the tests, and during the row search.

A copy consists of these records and a list of the symbols it has
computed but not yet handed on. Algorithm~\ref{alg:next-symbol}
hands on its next symbol, asking its own input copy for symbols as
needed. Once it has a symbol to hand on, a copy keeps computing until
it \emph{pauses}, that is, until its next step needs a symbol beyond
the end of $Q$. Between calls, then, every copy rests at a paused
record; these are the records that Section~\ref{sec:block-transducer}
tabulates, and the positions of the copies in
Section~\ref{sec:generator-idea} follow this rule.

\begin{algorithm}[!ht]
\caption{Handing on the next symbol of the queen word}
\label{alg:next-symbol}
\renewcommand{\algorithmicrequire}{\textbf{Input:}}
\renewcommand{\algorithmicensure}{\textbf{Output:}}
\begin{algorithmic}[1]
\Require A copy $C$: records \eqref{eq:generator-records} and a list of computed symbols.
\Ensure The next symbol that $C$ has not yet handed on, starting with $\sigma_{30}$.
\Function{NextSymbol}{$C$}
  \While{the list of $C$ is empty}
    \If{the next step of $C$ needs a symbol beyond the end of its $Q$}\Comment{a request}
      \If{$C$ has no input copy}
        \State Give $C$ an input copy, started before column $30$.
      \EndIf
      \State Append \Call{NextSymbol}{input copy of $C$} to the queue of $C$.
    \Else
      \State Carry out the step, and add its symbol $\sigma_n$ to the list.
    \EndIf
  \EndWhile
  \While{$C$ is not paused}
    \State Carry out the step, and add its symbol $\sigma_n$ to the list.
  \EndWhile
  \State Remove the first symbol from the list and return it.
\EndFunction
\end{algorithmic}
\end{algorithm}

For example, before column $48$ the first copy has $Q=\sigma_{29}$ and
needs $\sigma_{30}$, so it creates a second copy and calls it. In the
first loop, the second copy carries out column $30$; in the second, it
continues through column $47$ and pauses, since column $48$ needs
$\sigma_{30}$. It returns $\sigma_{30}$ and keeps
$\sigma_{31},\dots,\sigma_{47}$ in its list, which answer the first
copy's next requests without further computation. When the first
copy, before column $76$, asks for $\sigma_{48}$, the second copy must
carry out its own column $48$: it needs $\sigma_{30}$, creates a third
copy, and the pattern repeats one level down.

The outermost copy runs the same loop but recovers the rows
instead of handing on symbols. It keeps the column $n$, the row
reference $m$, and the upper count $U(n-1)$ as ordinary integers: a
lower choice with offset $r$ gives $q_n=m+r$, and an upper choice
gives $q_n=n+U(n-1)+1$ by Lemma~\ref{lem:upper}. The first thirty
rows are stored as a fixed seed.

Every symbol handed on is correct. Consider all steps of all copies
in the order in which they are carried out. The inputs of a step were
handed on earlier, so by induction they are actual symbols, and a
copy that receives actual symbols carries out the actual step, by the
induction of Section~\ref{sec:identification}. The simplifications
above change neither the queen chosen nor the records kept.

Every call also returns. A call to a copy with a symbol in its list
returns at once. Suppose that a copy $C$ before column $n$ calls its
input copy $C'$, and that the list of $C'$ is empty. The request is
for $\sigma_k$ with $k=m+|Q|\le m+6<n$. So far $C'$ has handed on
exactly $\sigma_{30},\dots,\sigma_{k-1}$, and with an empty list it
has computed exactly these, so it stands before column $k<n$. We show
by strong induction on the column that such a call returns. In the
first loop, $C'$ carries out one step, after at most six requests;
each goes to a copy that either has a symbol ready or, by the same
argument, stands before a column less than $k$, so each returns by
induction. A copy before a column less than $48$ needs no input at
all, which starts the induction. The second loop also ends: a copy
that has received input through $\sigma_{k-1}$ has $m\le k$, so by
Condition~\ref{cond:state} it stands before column
$n=m+U(m-1)+z\le k+U(k)+5$, and it can carry out only finitely many
steps before its next request.

\subsection{Compiling the calculation into a table}\label{sec:block-transducer}

In Algorithm~\ref{alg:next-symbol}, nearly all the work of a copy is
the calculation itself: the attack tests and record updates for every
queen. Along the actual process, however, the records take only
finitely many values. This subsection computes all of them in advance and
replaces the calculation by table lookups that place several queens
at once.

\medskip\noindent
\textbf{Paused records.}
Recall that a copy is paused when its next step needs one more
symbol in $Q$. Appending one input symbol lets the calculation run
until it pauses again, possibly placing several queens along the
way. Record all these outputs: for each queen, its symbol, its row
advance $\mu$, and its lower offset $r$ when it is lower; the last two
are needed only by the outermost copy, to recover the rows. Thus one
input gives a new paused record and a list of outputs. These steps
form a directed graph whose vertices are the paused records, with an
edge for each step, labeled by the symbol it reads and the outputs
it produces. A graph of this kind, which turns a stream of input
symbols into a stream of outputs, is called a \emph{transducer}.

For example, a copy first pauses before column $48$, after producing
$\sigma_{30},\dots,\sigma_{47}$, with $m=29$ and
\[
 w=1,\quad z=2,\quad R=D=A=\varnothing,\quad
 (u_{25},u_{26},u_{27},u_{28})=(0,0,1,0),\quad Q=\sigma_{29}=\mathtt{2}.
\]
The input $\sigma_{30}=\mathtt{1}$ is not yet enough, and the copy
pauses again with $Q=\mathtt{21}$ without placing a queen. The next
input, $\sigma_{31}=\mathtt{2}$, lets it place the lower queen
$(48,29)$. The input after that, $\sigma_{32}=\mathtt{2}$, places four
queens: the lower queen $(49,31)$ and the upper queens $(50,81)$,
$(51,83)$, and $(52,85)$. An input may thus yield no queen, one queen,
or several.

\medskip\noindent
\textbf{Which records occur.}
The table must cover every input that the actual process can supply,
but not every symbol: arbitrary inputs would lead to records that the
actual process never reaches. The history graph says which symbols
can come next, so we explore as in Section~\ref{sec:closure-check},
with paused records in place of states. Pair each paused record with the twelve symbols
ending at the right end of $Q$, which form a history-graph vertex.
Follow every outgoing edge of this vertex: append its symbol, compute
until the next pause, and shift the twelve symbols by one. Explore
all pairs reachable from the first pause in this way. Since the queen
word follows the history graph (Section~\ref{sec:identification}),
every actual input sequence is among those explored.

The history-graph vertex only decides which inputs to explore; the
calculation itself reads only the record and the supplied symbol. The
exploration reaches $2489$ pairs but only $300$ distinct paused
records, so the transducer is finite. We therefore discard the histories, leaving a graph of $300$ paused
records whose $476$ edges keep their input symbols and complete
output lists.

\medskip\noindent
\textbf{Combining records and input steps.}
Two compressions make the table small and fast. First, records can
share a table position when their responses agree on every input
that both accept; an input undefined at a record imposes no condition
there. We group the $300$ records into $82$ such classes, checking
every defined transition: whenever two records in a class accept the
same input, they must give identical output lists and successors in
the same class. A class may thus accept an input that the record a
copy actually holds does not. This is harmless: the input always
comes from the input copy and is an actual symbol, which the actual
record accepts.

Second, four symbols fit in one byte, so we combine four consecutive
input transitions into one table entry: one lookup then replaces four
steps, and the copies pass whole bytes. These paths are formed on the
$300$-record graph before replacing their endpoints by classes. This
order matters: a class can be entered through one record and left
through another, and paths introduced only in this way have not been
checked against the local calculation. Whenever paths begin in the same class and read the same
four symbols, we check that their complete output lists agree and
that they end in the same class. Hence the class and the four input
symbols, packed into one byte, determine a single entry. Each entry
emits between three and twelve symbols. Algorithm~\ref{alg:table}
summarizes the construction.

\begin{algorithm}[!ht]
\caption{Building the table}
\label{alg:table}
\renewcommand{\algorithmicensure}{\textbf{Output:}}
\begin{algorithmic}[1]
\Ensure The table $T$, or failure.
\State $S_0\gets$ the records before column $30$, run until they pause; $W_0\gets\sigma_{18}\dots\sigma_{29}$.
\State $\mathcal P\gets\{(S_0,W_0)\}$, unexamined.
\While{some pair $(S,W)\in\mathcal P$ is unexamined}
  \State Mark $(S,W)$ as examined.
  \For{each edge $W\xrightarrow{s}W'$ of the history graph}
    \State Append $s$ to the queue of $S$ and run until the next pause at $S'$, with outputs $L$.
    \State Record the transition $S\to S'$ reading $s$ and emitting $L$.
    \State Add $(S',W')$ to $\mathcal P$ as unexamined, unless it is already in $\mathcal P$.
  \EndFor
\EndWhile
\State Group the paused records into compatible classes.
\For{each path of four transitions from $S$ to $S'$, reading $a_1a_2a_3a_4$}
  \State $T[\text{class of }S,\ a_1a_2a_3a_4]\gets(\text{joined outputs},\ \text{class of }S')$\Comment{fails if it differs}
\EndFor
\State \Return the table $T$
\end{algorithmic}
\end{algorithm}

\medskip\noindent
\textbf{Using the table.}
At run time a copy keeps only its table position, the class of its
current paused record. Given four input symbols packed into a byte,
it reads the entry at its position and that byte, emits the entry's
output symbols, and moves to the entry's successor position. One
lookup replaces four input steps of Algorithm~\ref{alg:next-symbol},
with all their attack tests and record updates.

The entries are stored in a single array of $64$-bit words, in which
the rows of the different classes overlap to save space. Each class
has a base offset into this array: adding the input byte gives the
location to read. The entry contains the successor's base offset, the
packed output symbols, their number, and a reference to their
coordinate data. The program that builds the table checks every
defined entry and ensures that no two defined entries share a
location. During
generation the program retains this base offset as its table
position; neither the history graph nor any attack record is consulted. The
README of the generator gives the full construction counts and
storage layout.

Only the outermost copy uses the coordinate data; the other copies
need only the symbols. The data are relative to the counters at the
beginning of the \emph{block}, the placements made by one lookup.
Suppose that the block starts at column $n_0$, with row
reference $m_0$ and upper count $U_0=U(n_0-1)$. Number its placements
starting at $0$, let $\mu_i$ be the row advance after placement $i$,
and let $r_j$ be the lower offset at placement $j$ when it is lower.
Summing the counter updates gives
\begin{equation}\label{eq:block-coordinates}
 q_{n_0+j}=
 \begin{cases}
  m_0+\displaystyle\sum_{i<j}\mu_i+r_j,
    &\text{if the queen is lower},\\[4pt]
  n_0+U_0+j+\displaystyle\sum_{i=0}^{j}u_{n_0+i},
    &\text{if the queen is upper}.
 \end{cases}
\end{equation}
For each queen, the table therefore stores its row as a small offset
from one of the two bases $m_0$ and $n_0+U_0$. It also stores the total advances
of $m$ and $U$ across the block. This gives exact integer coordinates
without repeating the attack tests. The outermost copy can stop partway
through a block, for instance exactly at column $N$, and resume without repeating either a lookup or a
counter update. The table, coordinate data, and thirty seed rows
occupy $34925$ bytes in total.

\subsection{Buffers, correctness, and cost}\label{sec:generator-cost}

With the table, the copies pass symbols four at a time, packed into
one byte, the input of a lookup. This subsection adds buffering
between the copies, and then proves that the outputs are correct,
that every request returns, and the bounds on time and memory.

\medskip\noindent
\textbf{Buffers.}
The remaining overhead lies mostly in the calls between copies, so
each copy also computes ahead: it keeps a buffer of bytes ready for
the copy it supplies and refills it in batches. The input copy of the
outermost copy has a buffer budget of $1024$ bytes; each successive
budget is halved, down to a minimum of one. Deeper copies are called
less often, so small buffers cost them little time, and the buffers
of all copies together take about twice the first budget, plus a few
bytes per copy. Each copy starts with the eighteen symbols
$\sigma_{30},\ldots,\sigma_{47}$: four complete bytes and two symbols
carried forward. A buffer holds its budget plus three bytes of
padding, enough for these four initial bytes and for the output of a
lookup, at most twelve symbols, that goes past the budget.

Algorithm~\ref{alg:recursive-generation} gives the refill rule. It is
Algorithm~\ref{alg:next-symbol} with bytes in place of symbols, a
table lookup in place of four input steps, and a buffer refilled in
batches. Each copy
saves its table position, its place in the buffer, and up to three
symbols not yet forming a byte. The outermost copy uses the
same table entries but decodes \eqref{eq:block-coordinates} instead
of buffering the output word.

\begin{algorithm}[!ht]
\caption{Supplying four earlier symbols to a table lookup}
\label{alg:recursive-generation}
\renewcommand{\algorithmicrequire}{\textbf{Input:}}
\renewcommand{\algorithmicensure}{\textbf{Output:}}
\begin{algorithmic}[1]
\Require A persistent copy $C$, initialized with the eighteen-symbol prefix.
\Ensure The next four symbols of its output, packed into one byte.
\Function{NextByte}{$C$}
  \If{the ready-byte buffer of $C$ is empty}
    \If{$C$ has no input copy}
      \State Create $C'$ before column $30$, with half the budget of $C$, at least one.
      \State Save $C'$ as the input copy of $C$.
    \EndIf
    \State Let $C'$ be the input copy of $C$.
    \Repeat
      \State $a\gets\Call{NextByte}{C'}$
      \State Look up the entry for the saved table position of $C$ and byte $a$.
      \State Save its successor table position and append its output symbols.
      \State Move complete groups of four symbols to the ready-byte buffer.
    \Until{the buffer contains at least the budget of $C$ bytes}
  \EndIf
  \State \Return the next ready byte, advancing the buffer position.
\EndFunction
\end{algorithmic}
\end{algorithm}

\medskip\noindent
\textbf{Correctness of the outputs.}
The checks of Section~\ref{sec:block-transducer} establish that correct input symbols give the
same outputs and updates as the calculation. They cover every
reachable pair of a history and a local record, every transition
after merging, and every four-input path. The argument of
Section~\ref{sec:generator-records} then applies unchanged: in the
order in which outputs are completed across all copies, each output
uses input already produced by the next copy, so by induction its
table entry gives the correct new symbols and updates, and buffering
preserves their order. Buffering lets a copy compute ahead of what
has been requested, but the induction runs over the order in which
steps are carried out, so this changes nothing.

\medskip\noindent
\textbf{Termination of requests.}
In Algorithm~\ref{alg:next-symbol}, a copy that must compute stands
before a smaller column than the copy that called it. With buffering,
a copy may have computed ahead of what it has handed on, so we count
complete bytes instead of columns, including those still waiting in
a buffer. The key fact is that each copy has computed at least three
more complete bytes than it has consumed from its input, so a request
passed down the chain always reaches a copy that has done strictly
less work, and requests cannot descend forever.

At a block boundary, let $t$ be the number of input bytes consumed,
let $H$ be the number of output symbols computed starting at
$\sigma_{30}$, and put $P=\lfloor H/4\rfloor$. Thus $H$ includes the
initial eighteen symbols, and $P$ counts complete output bytes,
whether already supplied or still buffered. The input endpoint and the
current output column satisfy
\[
 m+|Q|=30+4t,\qquad n=30+H=m+\kappa+z.
\]
At the initial pause, before column $48$, we have $m=29$ and
$\kappa=U(28)=17$, and $\kappa$ never decreases along the actual
process; the table construction checks $z-|Q|\ge-4$ for all $300$
paused records. Subtracting the two identities therefore gives
\begin{equation}\label{eq:generator-lead}
 H-4t=\kappa+z-|Q|\ge13,
 \qquad P\ge t+3.
\end{equation}
These counters describe the local
calculation represented by the table, even though the inner copies do
not store them explicitly.

First consider the part of the chain where every buffer budget is one
byte. We show by strong induction on $P$ that a refill of such a copy,
which has computed $P$ complete bytes, returns. A refill requests
more input only while it has produced no complete new byte; as soon
as it has one, it returns. A fresh copy already has four ready bytes.
Otherwise each request to the input copy either returns a ready byte
at once or finds its buffer exhausted. In the second case the input
copy has computed exactly the $t$ complete bytes consumed by the
requesting copy, and by \eqref{eq:generator-lead} this count is at
most $P-3$, so the request returns by the induction hypothesis. Each
table lookup adds at least three symbols, so at most two lookups
complete a new byte and finish the refill. Only finitely many copies precede this part of the
chain with budgets greater than one, and each of their refills needs
finitely many requests to its input copy. Hence every request
returns.

\medskip\noindent
\textbf{Time and space bounds.}
The strict decrease above proves termination. To obtain the stronger
bounds on work and storage, we make precise the estimate of
Section~\ref{sec:generator-idea}: successive copies compute prefixes
whose lengths decrease geometrically, apart from a bounded buffering
allowance.

\begin{proposition}[Sequential generation]\label{prop:stream-generation}
With a fixed initial buffer budget, the table algorithm generates
$q_0,\ldots,q_N$ exactly in $O(N)$ operations on $O(\log N)$-bit
machine words. Its working memory is $O(\log N)$ words, including
the recursive stack, in addition to a fixed table and excluding
retained output.
\end{proposition}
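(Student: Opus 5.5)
The plan is to index the copies by depth $k=1,2,\dots$, with copy $1$ the outermost, and bound how many input bytes each one consumes. Let $H_k$ be the number of output symbols copy $k$ has computed, counted from $\sigma_{30}$, and let $t_k$ be the number of input bytes it has consumed. Copy $k+1$ produces exactly the input of copy $k$, plus at most its buffer budget $\beta_{k}\le 1024\cdot 2^{-(k-1)}$ rounded up to at least one, plus the three padding bytes. This gives
\[
 H_{k+1}\le 4\bigl(t_k+\beta_k+3\bigr)+12 .
\]
Next I bound $t_k$ in terms of $H_k$. By the identities $m+|Q|=30+4t$ and $n=30+H=m+\kappa+z$, we have $4t_k\approx m$, where $m$ is the row reference of copy $k$. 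Proposition~\ref{prop:stability} with $C=4$ from Lemma~\ref{lem:estimates} gives $\kappa=U(m-1)=m/\phi+O(1)$, and Condition~\ref{cond:state} together with $|Q|\le 11$ bounds $z$ and $|Q|$. Combining these, $n=\phi m+O(1)$, so
\[
 4t_k\le H_k/\phi+c_0
\]
for an absolute constant $c_0$. Putting the two estimates together yields the recursion
\[
 H_{k+1}\le H_k/\phi+c_1+4\beta_k .
\]
Since $\sum_k\beta_k\le 2048+O(\text{depth})$, iterating gives $H_k\le \phi^{-(k-1)}(N+c_2)+c_3$ for constants independent of $N$.

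The depth comes out of the same recursion. A new copy is created only when a copy exhausts its initial eighteen symbols and needs input, which requires $H_k>18$. Once $\phi^{-(k-1)}N$ is bounded, the additive constants prevent the $H_k$ from being pushed toward zero. So I argue slightly differently: copy $k+1$ exists only if copy $k$ consumed more than its four initial bytes. Once the constants dominate, $H_k$ stays in a bounded window, and because each copy's input is strictly less than its output minus $12$ symbols by \eqref{eq:generator-lead}, the counts $P_k$ strictly decrease by at least $3$ per level in this regime. That adds only $O(1)$ further levels. Hence the depth is $O(\log N)$.

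\textbf{Time.} Each table lookup emits at least three symbols and costs $O(1)$ word operations, with the table fixed. Copy $k$ therefore performs at most $H_k/3+O(1)$ lookups, plus $O(1)$ overhead per byte passed and per buffer refill. Summing the geometric bound gives $O(\phi^2 N+\log N)=O(N)$. The outermost copy decodes \eqref{eq:block-coordinates} in $O(1)$ per queen, on integers $n,m,U\le 3N$ that fit in $O(\log N)$-bit words.

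\textbf{Space.} Each copy stores a table position, a buffer index, at most three pending symbols, and a buffer of $\beta_k+3$ bytes. The total is $2\cdot 1024+O(\text{depth})=O(\log N)$ words. The recursion stack of $\Call{NextByte}{}$ has depth at most the chain length, so it is also $O(\log N)$.

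The main obstacle I expect is making the $n=\phi m+O(1)$ step rigorous for the inner copies. They do not store $m$ or $\kappa$, and a copy may have computed ahead inside a block or buffer. My first attempt is to apply the estimates at block boundaries only, where the identities above hold exactly, and to absorb the at most $12$ symbols of a partial block and the buffer into the constant $c_1$.
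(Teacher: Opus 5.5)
\textbf{Gap: the bound on the number of copies.} Your plan follows the paper's proof. You get the contraction $P_{k+1}\le P_k/\phi+O(\beta_k+1)$ from $\kappa=m/\phi+O(1)$ and the identities $m+|Q|=30+4t$, $n=30+H$ at block boundaries. Using these for inner copies is legitimate, since each inner copy runs the actual process on actual symbols. You then use \eqref{eq:generator-lead} for the tail and the halving budgets for space. The step that fails is the tail bound on the number of copies.

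You claim that once $H_k$ is bounded, the totals $P_k$ drop by at least $3$ per level, citing \eqref{eq:generator-lead}. But that inequality compares copy $k$'s output with its \emph{own} consumed input, $P_k\ge t_k+3$. It says nothing about $P_{k+1}$, which exceeds $t_k$ by whatever copy $k+1$ has computed ahead into its buffer. Your own first inequality shows this: it gives only $P_{k+1}\le t_k+\beta_k+6\le P_k+\beta_k+3$. For budget-one copies the buffered excess it allows is at least as large as the lead, so no decrease follows. A bounded $H_k$ also does not by itself prevent copy $k+1$ from being created, since a copy with bounded output can still have $t_k\ge1$. Relatedly, the four initial bytes are output, not input: copy $k+1$ is created only after copy $k$ has handed on its four initial bytes and is asked for more.

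\textbf{Missing idea: a snapshot at creation time.} Bound the depth at the moment a copy is created, rather than by comparing final totals. A copy is created only at the bottom of a chain of pending \textsc{NextByte} calls. Every copy below the outermost in that chain was entered with an empty ready buffer, because a copy holding a ready byte returns at once.

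At that instant, each such copy has computed exactly the complete bytes its caller has consumed, so $P_{j+1}=t_j$. The caller is at a block boundary, so \eqref{eq:generator-lead} and the contraction give $P_{j+1}=t_j\le\min\{P_j-3,\;P_j/\phi+O(1)\}$ along the chain, whatever the budgets. Starting from $P_1=O(N)$, the chain therefore has length $O(\log N)$: geometric decrease down to a constant, then only $O(1)$ further levels by the strict decrease.

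This bounds both the number of copies ever allocated and the recursion depth, and it is how the paper argues. Your buffered recursion is then needed only for the total work, and your summation there is fine.
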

\begin{proof}
Correctness and termination were established above. For the cost,
Proposition~\ref{prop:stability} gives $\kappa=m/\phi+O(1)$.
Substituting this into the identities for the input endpoint and
output column, with bounded $z$ and $|Q|$, gives $t=P/\phi+O(1)$.
Thus a copy computing $P$ output bytes consumes only $P/\phi+O(1)$
input bytes.

Let $P_j$ count complete bytes computed by the $j$th copy, let $t_j$
count the input bytes it has consumed, and let $b_{j+1}$ be the next
copy's buffer budget. Between refills, the next copy has computed at
most these $t_j$ bytes plus those in its buffer, at most $b_{j+1}+3$.
Since $t_j=P_j/\phi+O(1)$,
\begin{equation}\label{eq:generator-contraction}
 P_{j+1}\le\frac{P_j}{\phi}+O(b_{j+1}+1).
\end{equation}
With a fixed initial budget, this gives geometric decrease above a
fixed threshold. Only a fixed number of levels have budgets greater
than one. Below the threshold, the strict decrease in
\eqref{eq:generator-lead} bounds the length of any chain of pending
calls through budget-one copies. A new copy is allocated only at the
end of a chain of pending calls, so at most $O(\log N)$ copies are ever retained.

Summing \eqref{eq:generator-contraction} along the chain gives $O(N)$
total byte production and hence $O(N)$ table lookups. Each lookup,
byte transfer, and coordinate decoding has bounded cost. For $L$
inner copies with initial budget $B$, the halving budgets sum to at
most $2B+L$, and the padding uses $3L$ further bytes. Each copy also
retains a bounded record and a bounded stack frame. Since $B$ is
fixed and $L=O(\log N)$, the claimed space bound follows.
\end{proof}

\newpage
\subsection{C implementation and measured performance}\label{sec:generation-benchmarks}

On a fresh $64$~GiB Ubuntu EC2 instance, our C generator produced and
hashed ten billion queen rows in a median $25.413$~s, compared with
$49.040$~s for bit-packed Knuth: 
a speedup of $1.93$. Its median peak
memory use was $1.76$~MiB, compared with $6243.51$~MiB.

\begin{table}[!ht]
\centering
\begin{tabular}{@{}r r r r r@{}}
\toprule
& \multicolumn{2}{c}{Elapsed time (s)}
& \multicolumn{2}{c}{Peak memory (MiB)}\\
\cmidrule(lr){2-3}\cmidrule(l){4-5}
$M$ & Our C & Packed Knuth & Our C & Packed Knuth\\
\midrule
$10^{6}$ & 0.003 & 0.005 & 1.76 & 2.20\\
$10^{7}$ & 0.026 & 0.049 & 1.67 & 7.82\\
$10^{8}$ & 0.253 & 0.486 & 1.76 & 64.07\\
$10^{9}$ & 2.524 & 4.862 & 1.67 & 625.88\\
$10^{10}$ & 25.413 & 49.040 & 1.76 & 6243.51\\
\bottomrule
\end{tabular}
\caption{Measured generation and hashing on the Ubuntu EC2
instance. Both programs are written in C, with the same checksum and
measurement boundary.}
\label{tab:c-generation}
\end{table}

The implementation in \path{fast_generator} of the
companion repository uses portable
C11 with fixed-width unsigned integers. Its default table reads four
symbols at a time, with initial buffer budget $1024$, and it loads no
files at run time. The interface supports single rows,
filled arrays, and incremental hashing, with arbitrary pauses between
calls.

We compared it with a bit-packed adaptation of Knuth's
\texttt{infty-queens} \cite{KnuthQueens}. The adaptation packs the
occupancy flags into $64$-bit words and widens indices to $64$ bits,
retaining Knuth's placement order and control flow; its statistics
and per-queen printing are removed. Both programs decode every row,
fold it into the same $64$-bit checksum, and print one final summary
without retaining the sequence. Before timing, all table artifacts
were regenerated, all $3483$ four-input paths, comprising $24811$
local steps, were replayed against a separate implementation of the
calculation, and every coordinate through the first million queens was
compared with an independent occupancy-based calculation. These finite
tests support the implementation; its correctness is
Proposition~\ref{prop:stream-generation}.

Table~\ref{tab:c-generation} reports medians of three runs after one
excluded warmup, on an Amazon EC2 \texttt{r7i.2xlarge} instance with
an Intel Xeon Platinum 8488C processor, Ubuntu~24.04, and GCC~13.3.
Both programs used the same compiler options, ran single-threaded,
and were pinned to one CPU, alternating in order. Elapsed time runs
from process creation to exit. Peak memory is the largest amount of
physical memory that the process occupied at any time (its peak
resident set size), as reported by the kernel. All runs agreed on the final coordinate and checksum. The
count $M$ includes the origin, so the last column is $M-1$.

From $10^9$ to $10^{10}$ queens, both elapsed times grew by a factor
of about ten, consistent with linear running time. The generator's
fixed data occupy $34925$ bytes, and at $10^{10}$ queens its $42$
levels request a further $3560$ bytes of heap. This storage grows
logarithmically, although at these sizes it fits within memory pages
that the process has already allocated. Knuth's packed arrays instead
need about $(2\phi+2)M/8\approx0.655M$ bytes: about $61$~GiB at
$10^{11}$ queens, close to the capacity of this machine. These results
concern bulk generation with a checksum; they do not
establish the same ratio for row-at-a-time calls, printed output, or
other hardware. The report \path{fast_generator/REPORT.md} gives the full measurement
procedure, raw observations, and linear projections to larger counts,
and Appendix~\ref{app:stream-generation} gives the commands.
\clearpage
\appendix
\renewcommand{\thesubsection}{\thesection\arabic{subsection}}
\section{Reproducing the computation}\label{app:artifacts}
The programs described here are in the companion repository
\url{https://github.com/boonsuan/queens}. Its folder
\texttt{verification} contains the finite verification of
Sections~\ref{sec:local}--\ref{sec:finite}, the folder \texttt{oeis}
the checks for Section~\ref{sec:oeis-consequences}, and the folder
\path{fast_generator} the generator of Section~\ref{sec:generation}.
Each folder has a README with further details. Only the check of
Appendix~\ref{app:verification} is needed for
Theorem~\ref{thm:main}; the other programs reconstruct the history
graph it checks, test the implementations, or carry out further
computations. The Python programs
need Python~3.10 or later and no other packages, and all
mathematical checks use integer arithmetic.

\subsection{The finite verification}\label{app:verification}
From the folder \texttt{verification}, run
\begin{verbatim}
python verify_tuples.py
python verify_bitmasks.py
python compare_verifiers.py
python check_correspondence.py
python test_rejection.py
\end{verbatim}
Each takes a few seconds.

The first command carries out the check of
Section~\ref{sec:closure-check}, including the starting checks of
Section~\ref{app:seed}. It uses the module \texttt{calculation.py},
which stores the eight fields of \eqref{eq:state} as a tuple and
implements Algorithm~\ref{alg:local-successors} with generators that
yield one result per branch. The second command
performs the same check independently: it shares no code with the
first, packs the records into integers, splits each symbol into its
two bits, and handles branching with explicit work lists. The third
converts both to a common encoding and checks that they reach the same
states with the same complete sets of successors. This agreement
checks the implementations against each other; the correspondence
with the actual infinite sequence is the induction in
Section~\ref{sec:identification}.

The fourth command computes $3000$ queens directly from
\eqref{eq:greedy}. For each column $30\le n<3000$, it runs the actual
branch of Lemma~\ref{lem:local-exactness}, and checks
that every requested symbol labels a history-graph edge and has index
at most $m+6<n$, and that the branch yields the actual next queen and
the actual records before column $n+1$. The last command removes the edge carrying $\sigma_{30}$
from a copy of the history graph and confirms that both verifiers
reject it, so the check is not vacuous.

The command \texttt{python trace.py 41 44} prints the actual records
before each column in a range, the requests made, the queen chosen,
and the new symbol, checking each step against the board. It
reproduces the examples of Section~\ref{sec:worked-transition}.

The two verifiers report
\begin{center}
\begin{tabular}{lr}
\toprule
Quantity&Result\\
\midrule
History-graph vertices and edges&$2092$, $2603$\\
State-graph vertices and edges&$7014$, $8327$\\
Completed local choices&$7612$\\
Completed lower choices&$3153$\\
Largest offset of a requested symbol&$5$\\
Largest row advance $\mu$&$5$\\
Range of $w-r-|D|$ at lower choices&$[-4\dts2]$\\
Requests with no outgoing edge&$0$\\
\bottomrule
\end{tabular}
\end{center}
The state-graph counts refer to distinct states and ordered pairs of
states. The row search can branch again after a choice has been made,
so completed choices and state edges have different counts. The
observed maxima for requested symbols and row advances are smaller
than the bounds established in Lemma~\ref{lem:local-exactness}. Only the
diagonal discrepancy bound $4$ is used in the main theorem.
Section~\ref{sec:sharper} also uses the upper end $2$
of the range of $w-r-|D|$.

The command \texttt{python sharper\_constants.py} repeats the
exploration of the first verifier and checks the facts used in
Section~\ref{sec:sharper}: every state satisfies
$-4\le w-|D|-\phi|R|\le1$, every lower choice has $w-r-|D|\le2$, and
the columns before $30$ satisfy $-3/\phi<\varepsilon(n)<2/\phi$ and
$d_j-j\le2$. The comparisons with $\phi$ are exact, made with integers
by squaring. Over the $7014$ states, $w-|D|-\phi|R|$ attains both
$-4$ and $1$.

The history graph is stored in \texttt{history.json} as
\texttt{\{"memory": 12, "vertices": [[h, mask], \ldots]\}}. Each
entry is one vertex: \texttt{h} is its twelve-symbol word in base
four, with the newest symbol in the least significant digit, and bit
$s$ of \texttt{mask} is set when an edge labeled $s$ leaves it.
Since the file stores only the labels, the verifiers first check
that the destination $\last(Ws)$ of every such edge is again a
listed vertex, so that the file describes a graph as in
Definition~\ref{def:history-graph}. The
completed graph is a \emph{certificate}: finite proof data whose
required properties the verifiers check.

\subsection{Constructing the history graph}\label{app:construction}
From the folder \texttt{verification}, run
\begin{verbatim}
python construct_history_graph.py
python history_length_experiment.py
\end{verbatim}
The first command carries out the construction of
Section~\ref{sec:finite-statement}. It computes the first thirty
queens directly from \eqref{eq:greedy} and runs the calculation, adding
each missing output edge instead of failing, until a complete
exploration adds no edge. It builds the graph without using
\texttt{history.json}, reports each exploration, and finally confirms
that the result is identical to \texttt{history.json}.

To see that the length twelve is not arbitrary, the second command
repeats the construction with histories of each length $L$ from $4$
through $14$, keeping the same start, calculation,
and Condition~\ref{cond:state}. Shorter lengths are not tested:
because $z\ge-4$, the upper-queen tests can involve the upper-column
bits of columns $m-4,\dots,m-1$ (compare Section~\ref{sec:generator-records}),
so the input history must contain them. Lengths $4$ through $11$ fail,
because the construction reaches a successor violating
Condition~\ref{cond:state}; for lengths $5$ through $11$, the violation
found has $w=5$. Lengths $12$, $13$, and $14$ succeed, and each
resulting graph passes the check of Section~\ref{sec:closure-check}.
Thus twelve is the smallest successful length in this setting.

\subsection{Additional checks for the OEIS consequences}\label{app:oeis-checks}

The computations for Section~\ref{sec:oeis-consequences} reuse the
programs of the folder \texttt{verification} with longer histories. From the root of
the repository, run
\begin{verbatim}
python oeis/run_all.py
\end{verbatim}
It takes under a minute, prints one line for each statement checked,
and writes the graphs, languages, and witnesses to
\path{oeis/results}, where the recorded outputs are also stored.

\medskip\noindent
\textbf{The refined certificate.}
Replace both twelve-symbol histories by histories of length $40$,
keeping the records, Condition~\ref{cond:state}, and the calculation
of Section~\ref{app:output}. Start immediately before column $80$,
where direct computation from \eqref{eq:greedy} gives $m=51$, $d=32$, $U(m-1)=31$, $w=-3$,
$z=-2$, and $R=D=A=\varnothing$; the input history, queue, and output
history cover indices $11$--$50$, $51$--$79$, and $40$--$79$.
Lemma~\ref{lem:local-exactness} remains valid: omitted upper columns
lie further in the past, requests still end by $m+6$, and
$n-m=U(m-1)+z\ge12-4=8$, so its hypothesis remains $U(m-1)\ge12$,
not $40$. The induction of Section~\ref{sec:identification}
therefore applies once the refined graph passes the check of
Section~\ref{sec:closure-check}. The construction of
Appendix~\ref{app:construction}, with length $40$ and start
column $80$, gives a graph with $16876$ vertices and $17499$ edges.
With this graph fixed, the calculation reaches $29267$ states and
$30000$ directed state edges, and every check passes.

\medskip\noindent
\textbf{Graphs for the gap and run sequences.}
The output symbol of a state-graph edge is the last symbol of the
successor's $H_{\mathrm{out}}$. Starting from a state just after an
upper output, follow lower outputs to the next upper output, and
replace this path by a single edge labeled by its length. The result
is a finite graph whose walks include the gap sequence $g$ after
column $80$. Following edges labeled $1$ and then one edge labeled
$k>1$, and labeling the result $k-1$, gives the graph for $r$ from
which the proof of Corollary~\ref{cor:oeis-runs} obtains the sets
$\mathcal L_c$.
Similarly, the labels along paths from one $3$ to the next give the
return words. The part of the gap graph avoiding $3$ is acyclic, so
this language is finite and is computed exactly. For each return word,
the union of the languages at its possible endpoints contains every
word that can follow it; it has a single element for exactly $63$
words. The shortest path between two edges labeled $4$ has length
$71$, and the paths of that length all have the same labels.

\medskip\noindent
\textbf{Occurrence witnesses.}
The graphs give only upper bounds: attainment and nonfaithfulness
come from the actual sequence. The first $20000$ queens, computed
by a bitboard implementation of the defining rule, contain a witness
for every value in Corollary~\ref{cor:oeis-runs}. They also contain
the eleven runs of equal terms of $r$ that begin before the run graph
applies, whose lengths lie in the sets $\mathcal L_c$; deciding that
these runs are maximal needs the columns through $95$. A
million-queen prefix, generated by a separate program that agrees
with the bitboard implementation on the first $200000$ queens, contains every return word by gap index $108015$, and two
different successors of each word that is not faithful by gap index
$573517$. The file \path{oeis/AUDIT.md} separates these proved
consequences from observations that the checked graphs do not settle.

\subsection{Running the fast generator}\label{app:stream-generation}

The folder \path{fast_generator} contains the C11 generator of
Section~\ref{sec:generation}, the program that builds its table, the
checks, the benchmark tools, and the recorded measurements. From that
folder, with a C11 compiler, \texttt{make}, and Python~3.10 or later,
run
\begin{verbatim}
make
./build/queens_fast --count 1000000
make verify
make test
\end{verbatim}
The second command generates the first million rows and prints a
summary with the last row and a checksum of all rows; add
\texttt{-{}-emit} to print every column and row, and see
\path{src/queens_fast.h} for the interface. The command
\texttt{make verify} rebuilds every generated file, the table and the
test cases, from the history graph and checks that each is identical
to the committed copy, then replays every local step and
every four-input path through a separate implementation of the local
calculation. The command \texttt{make test} compares every coordinate
through the first million queens with a calculation using full
occupancy arrays, whose first $3000$ rows are checked against the
greedy rule, and exercises the interface.

The runs behind Table~\ref{tab:c-generation} are recorded in
\path{results/ec2-native.json}, with the machine and build details in
\path{results/ec2-environment.json}. The command
\begin{verbatim}
python3 tests/check_ec2_results.py \
  --environment results/ec2-environment.json
\end{verbatim}
rechecks the recorded runs and recomputes the table, and
\path{bench/run_benchmark.py} runs a new campaign on a Linux machine.
The comparator \path{knuth/knuth_packed.c} is derived from Knuth's
\texttt{infty-queens} by \path{knuth/derive_knuth_packed.py}.

The check of Knuth's ranges at the end of Section~\ref{sec:contraction}
uses \path{src/scan_bounds.c}. With GCC or Clang, run
\begin{verbatim}
make build/scan_bounds build/knuth_packed
python3 tests/check_bounds.py
python3 bench/run_bounds_scan.py --count 100000000000
\end{verbatim}
The scanner checks both intervals and their union, and records the
extreme deviations and the queens attaining them. Comparisons too close
to decide by a rigorously bounded floating-point approximation are
resolved with exact integer arithmetic. The recorded result is
\path{results/knuth-bounds-1e11.json}, and \path{REPORT.md} describes
both measurements in full.
%
\section*{Declaration of AI usage}
The proof was found with GPT-6 Pro, which also produced an initial
draft of this paper. The paper was later revised with Claude Opus~5.5
under the author's direction.

\section*{Code availability}
The code for the finite verification, the OEIS checks, and the
generator of Section~\ref{sec:generation} is available at
\url{https://github.com/boonsuan/queens}, together with a Lean
formalization.
An interactive webpage accompanying this paper is available from
\url{https://greedyqueens.com}.
%

%
\end{document}